\numberwithin{equation}{section}
\newtheorem{thm}{Theorem}[section]
\newtheorem{lem}[thm]{Lemma}
\theoremstyle{remark}
\newtheorem*{remark}{Remark}
\title{Eigenvalue asymptotics for a class of multi-variable Hankel matrices}
\date{}
\author{Christos Panagiotis Tantalakis \thanks{christos.tantalakis@kcl.ac.uk} \\ Department of Mathematics, King’s College London}
\begin{document}
	
\maketitle

\begin{abstract}
A one-variable Hankel matrix $H_a$ is an infinite matrix $H_a=[a(i+j)]_{i,j\geq0}$. Similarly, for any $d\geq2$, a $d$-variable Hankel matrix is defined as $H_{\mathbf{a}}=[\mathbf{a}(\mathbf{i}+\mathbf{j})]$, where $\mathbf{i}=(i_1,\dots,i_d)$ and $\mathbf{j}=(j_1,\dots,j_d)$, with $i_1,\dots,i_d,j_1,\dots,j_d\geq0$. For $\gamma>0$, A. Pushnitski and D. Yafaev proved that the eigenvalues of the compact one-variable Hankel matrices $H_a$ with $a(j)=j^{-1}(\log j)^{-\gamma}$, for $j\geq2$, obey the asymptotics $\lambda_n(H_a)\sim C_\gamma n^{-\gamma}$, as $n\to+\infty$, where the constant $C_\gamma$ is calculated explicitly. This paper presents the following $d$-variable analogue. Let $\gamma>0$ and $a(j)=j^{-d}(\log j)^{-\gamma}$, for $j\geq2$. If $\mathbf{a}(j_1,\dots,j_d)=a(j_1+\dots+j_d)$, then $H_{\mathbf{a}}$ is compact and its eigenvalues follow the asymptotics $\lambda_n(H_{\mathbf{a}})\sim C_{d,\gamma}n^{-\gamma}$, as $n\to+\infty$, where the constant $C_{d,\gamma}$ is calculated explicitly.
\end{abstract}
\footnotetext{\textbf{MSC2020}: 47B35, 47B06}
\footnotetext{\textbf{Keywords}: eigenvalues, multi-variable Hankel operators, pseudo-differential operators, Schatten classes}

\section{Introduction}

\subsection{One-variable Hankel operators}

Let $\mathbb{N}_0=\{0,1,2,3,\dots\}$. Given a complex valued sequence $a=\{a(j)\}_{j\in\mathbb{N}_0}$, a Hankel operator (matrix) $H_a$ on $\ell^2(\mathbb{N}_0)$ is formally defined by
\begin{equation*}
	\big(H_ax\big)(i)=\sum_{j\in\mathbb{N}_0}a(i+j)x(j), \ \forall i\in\mathbb{N}_0, \ \forall x=\{x(j)\}_{j\in\mathbb{N}_0}\in\ell^2(\mathbb{N}_0).
\end{equation*}
The sequence $a$ is called a \textit{parameter sequence}.

Nehari's theorem \cite[Theorem 1.1]{peller2012hankel} gives a necessary and sufficient condition for $H_a$ to be bounded on $\ell^2(\mathbb{N}_0)$. A simple sufficient condition is given by $a(j)=O(j^{-1})$, when $j\to+\infty$. A sufficient condition for compactness is $a(j)=o(j^{-1})$, when $j\to+\infty$. Note that these two conditions are also necessary in the case of positive Hankel operators \cite[Theorems 3.1, 3.2]{widom1966hankel}.

Let $\alpha>0$ and consider the Hankel operator $H_a$, where
\begin{equation*}
	a(j)=(j+1)^{-\alpha}, \ \forall j\in\mathbb{N}_0.
\end{equation*}
For $\alpha\in(0,1)$, $H_a$ is not bounded. When $\alpha=1$, $H_a$ is bounded but not compact. In this case, $H_a$ is known as Hilbert's matrix. Finally, for $\alpha>1$, $a(j)=o(j^{-1})$, as $j\to+\infty$, and so, $H_a$ is bounded and compact. From this discussion, it is inferred that the exponent $\alpha=1$ is the boundedness-compactness threshold.

In \cite{widom1966hankel} (see the example after Theorem 3.3) it is proved that the eigenvalue asymptotics of the Hankel operators with parameter sequence $a(j)=(j+1)^{-\alpha}$, where $\alpha>1$, are described by
\begin{equation*}
	\lambda_{n}(H_a)=\exp(-\pi \sqrt{2(\alpha-1) n}+o(\sqrt{n})), \ n\to+\infty.
\end{equation*}

In \cite{pushnitski2015asymptotic} A. Pushnitski and D. Yafaev studied a whole class of Hankel operators that lies between the cases $\alpha=1$ and $\alpha>1$. That was achieved by considering parameter sequences $a=\{a(j)\}_{j\in\mathbb{N}_0}$ of the following type
\begin{equation*}
	a(j)=j^{-1}(\log j)^{-\gamma}, \ \forall j\geq 2,
\end{equation*}
where $\gamma>0$. They concluded that if $\{\lambda_n^+(H_a)\}_{n\in\mathbb{N}}$ is the non-increasing sequence of positive eigenvalues of $H_a$, and $\lambda_n^{-}(H_a)=\lambda_n^+(-H_a)$, then the eigenvalues of the corresponding Hankel operator $H_a$ obey the following asymptotic law
\begin{equation}\label{eq0}
	\lambda_n^{+}(H_a)=C_\gamma n^{-\gamma}+o(n^{-\gamma}) \ \ \text{and } \ \lambda_n^-(H_a)=o(n^{-\gamma}), \ n\to+\infty,
\end{equation}
where
\begin{equation}\label{17}
	C_\gamma=\left[\frac{1}{2\pi}\int_\mathbb{R}\left(\frac{\pi}{\cosh(\pi x)}\right)^{\frac{1}{\gamma}}\dd{x}\right]^\gamma=2^{-\gamma}\pi^{1-2\gamma}B\left(\frac{1}{2\gamma},\frac{1}{2}\right)^\gamma;
\end{equation}
here $B(\cdot,\cdot)$ is the Beta function.

\subsection{Multi-variable Hankel operators}

The purpose of this section is to introduce the multi-variable Hankel operators and develop the $d$-variable analogue of the asymptotics (\ref{eq0}).

From now on, we will denote all the multi-variable functions and their arguments by boldface letters. So, for $d\geq 2$ consider the set $\mathbb{N}_0^d=\{\mathbf{j}=(j_1,j_2,\dots,j_d): \ j_i\in\mathbb{N}_0, \ \text{for } i=1,2,\dots,d\}$ and the space $\ell^2(\mathbb{N}^d_0)$ of $d$-variable square summable sequences $\mathbf{x}=\{\mathbf{x}(\mathbf{j})\}_{\mathbf{j}\in\mathbb{N}_0^d}$.
Let $\mathbf{a}=\{\mathbf{a}(\mathbf{j})\}_{\mathbf{j}\in\mathbb{N}_0^d}$ be a complex valued sequence and define, formally, the Hankel operator $H_\mathbf{a}$ on $\ell^2(\mathbb{N}_0^d)$ by
\begin{equation*}
	\big(H_\mathbf{a}\mathbf{x}\big)(\mathbf{i}):=\sum_{\mathbf{j}\in\mathbb{N}_{0}^{d}}\mathbf{a}(\mathbf{i}+\mathbf{j})\mathbf{x}(\mathbf{j}), \ \forall \mathbf{i}\in\mathbb{N}_0^d, \ \forall \mathbf{x}=\{\mathbf{x}(\mathbf{j})\}_{\mathbf{j}\in\mathbb{N}_0^d}\in\ell^2(\mathbb{N}_0^d).
\end{equation*}
The sequence $\mathbf{a}$ is a \textit{parameter sequence}.

To the best of the author's knowledge, no necessary and sufficient conditions for the boundedness or compactness of $H_\mathbf{a}$ are available at present. Heuristically, $\mathbf{a}(\mathbf{j})$ can go to zero at different rates in different directions, which makes the problem more subtle than in the one-variable case. One can make progress by focusing on a subclass of sequences $\mathbf{a}(\mathbf{j})$. In this paper, we consider the following subclass. Let $a=\{a(j)\}_{j\in\mathbb{N}_0}$ be a one-variable sequence and define
\begin{equation*}
	\mathbf{a}(\mathbf{j})=a(\left|\mathbf{j}\right|), \ \text{where } \left|\mathbf{j}\right|=\sum_{i=1}^{d}j_i, \ \forall \mathbf{j}=(j_1,j_2,\dots,j_d)\in\mathbb{N}_0^d.
\end{equation*}
In this case, it can be verified that $H_\mathbf{a}$ is bounded if $a(j)=O(j^{-d})$ and compact if $a(j)=o(j^{-d})$, when $j\to+\infty$. Moreover, for $\alpha>0$ consider the sequence 
\begin{equation*}
	a(j)=(j+1)^{-\alpha}, \ \forall j\in\mathbb{N}_0.
\end{equation*}
If $\alpha\in(0,d)$, then $H_\mathbf{a}$ is unbounded. If $\alpha=d$, $H_\mathbf{a}$ is bounded but not compact and for $\alpha>d$, the aforementioned tests imply boundedness and compactness. Therefore, the boundedness-compactness threshold exponent, for this choice of the parameter sequence $\mathbf{a}$, is $\alpha=d$. The main result of this paper is the $d$-variable analogue of (\ref{eq0}). We first give a simple version of our result, Theorem \ref{thm};
a more complete statement is Theorem \ref{thm0} below. In order to formulate Theorem \ref{thm}, let $\mathcal{F}$ be the Fourier transform on the real line; i.e.
\begin{equation}\label{F}
	\big(\mathcal{F}f\big)(x)=\int_{\mathbb{R}}f(y)e^{-2\pi i xy}\dd{y}, \ \forall x\in\mathbb{R}.
\end{equation}
The inverse Fourier transform, $\mathcal{F}^*f$, of $f$ will be often denoted by $\check{f}$.

\begin{thm}\label{thm}
Let $\gamma>0$ and consider the parameter sequence $\mathbf{a}(\mathbf{j})=a(\left|\mathbf{j}\right|)$, for all $\mathbf{j}\in\mathbb{N}_0^d$, where
\begin{equation}\label{0}
	a(j)=j^{-d}(\log j)^{-\gamma}, \ \forall j\geq2.
\end{equation}
Moreover, for any $j\in\mathbb{N}$, define the function
\begin{equation}\label{eq2}
	\phi_j(x):=\frac{1}{\cosh[j](\frac{x}{2})}, \ \forall x\in\mathbb{R}.
\end{equation}
Then the corresponding Hankel operator $H_\mathbf{a}$ is self-adjoint, compact, and it presents power eigenvalue asymptotics of the form below:
\begin{equation*}
	\lambda_n^+(H_\mathbf{a})=C_{d,\gamma}n^{-\gamma}+o(n^{-\gamma}) \ \ \text{and } \ \lambda_n^-(H_\mathbf{a})=o(n^{-\gamma}), \ n\to+\infty,
\end{equation*}
where
\begin{equation}\label{18}
	C_{d,\gamma}=\frac{1}{2^d(d-1)!}\left( \int_\mathbb{R}\check{\phi}_d^{\frac{1}{\gamma}}(x)\dd{x} \right)^{\gamma}.
\end{equation}
\end{thm}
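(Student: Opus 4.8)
The plan is to exploit the special structure of $\mathbf{a}$ to collapse $H_{\mathbf{a}}$ to a one-variable weighted Hankel operator, then pass to an integral operator on $\mathbb{R}_+$, and finally --- after the substitution $t=e^x$ --- to a pseudodifferential operator whose eigenvalue asymptotics are read off from its symbol, following the strategy of Pushnitski and Yafaev. First, since $\mathbf{a}(\mathbf{j})=a(|\mathbf{j}|)$, set $\mu_d(n)=\#\{\mathbf{j}\in\mathbb{N}_0^d:|\mathbf{j}|=n\}=\binom{n+d-1}{d-1}$. The subspace $V\subset\ell^2(\mathbb{N}_0^d)$ of sequences constant on every level set $\{|\mathbf{j}|=n\}$ is invariant under $H_{\mathbf{a}}$, and $H_{\mathbf{a}}$ annihilates $V^\perp$ (a vector in $V^\perp$ sums to $0$ over each level set, and $(H_{\mathbf{a}}\mathbf{x})(\mathbf{i})$ depends on $\mathbf{x}$ only through these sums). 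Since $(Uf)(\mathbf{j})=f(|\mathbf{j}|)$ is a unitary $\ell^2(\mathbb{N}_0,\mu_d)\to V$, the nonzero spectrum of $H_{\mathbf{a}}$ equals that of $U^*H_{\mathbf{a}}U$, i.e. (after the obvious unitary rescaling $f\mapsto\sqrt{\mu_d(\cdot)}\,f$) of the operator $\Gamma$ on $\ell^2(\mathbb{N}_0)$ with matrix $\bigl[\sqrt{\mu_d(m)\mu_d(n)}\,a(m+n)\bigr]_{m,n\ge 0}$; hence $\lambda_n^\pm(H_{\mathbf{a}})=\lambda_n^\pm(\Gamma)$ for every $n$. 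Self-adjointness of $H_{\mathbf{a}}$ is clear (the parameter sequence is real and $a(j)=O(j^{-d})$ gives boundedness), and compactness follows from $a(j)=o(j^{-d})$ via the stated test.

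Next I would single out the model operator. Since $\mu_d(n)=\tfrac{n^{d-1}}{(d-1)!}\bigl(1+O(1/n)\bigr)$, the matrix of $\Gamma$ equals $\tfrac{(mn)^{(d-1)/2}}{(d-1)!}(m+n)^{-d}\bigl(\log(m+n)\bigr)^{-\gamma}$ up to a perturbation which, together with the finitely many entries with $m+n\le 2$, lies in a Schatten class $\mathfrak{S}_p$ with $p<1/\gamma$; by the stability of power eigenvalue asymptotics under such perturbations (as in the work of Pushnitski and Yafaev) this perturbation does not affect the $n^{-\gamma}$ asymptotics. A discretization argument of the kind used by Pushnitski and Yafaev then replaces the leading part of $\Gamma$, modulo a further lower-order Schatten error, by the integral operator $\mathbf{T}$ on $L^2(\mathbb{R}_+)$ with kernel $\tfrac{1}{(d-1)!}(ts)^{(d-1)/2}(t+s)^{-d}\bigl(\log(t+s)\bigr)^{-\gamma}$ for $t+s$ large (and extended arbitrarily where $t+s$ is small, which is immaterial modulo lower-order errors).

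Then I would substitute $t=e^x$ and conjugate by the unitary $(Vf)(x)=e^{x/2}f(e^x)$ on $L^2(\mathbb{R})$. Using $\bigl(\tfrac{e^{(x+y)/2}}{e^x+e^y}\bigr)^d=2^{-d}\phi_d(x-y)$ and $\log(e^x+e^y)=\tfrac{x+y}{2}+\log\bigl(2\cosh(\tfrac{x-y}{2})\bigr)$, the operator $V\mathbf{T}V^*$ has kernel $\tfrac{1}{2^d(d-1)!}\phi_d(x-y)\bigl(\tfrac{x+y}{2}+\log(2\cosh(\tfrac{x-y}{2}))\bigr)^{-\gamma}$. On the region that governs the spectral asymptotics --- where $x+y$ is large, so in particular $x>0$ --- one replaces the bracket by $(\tfrac{x+y}{2})^{-\gamma}$: since $\phi_d$ decays exponentially, the correction is $O\bigl((x+y)^{-\gamma-1}\bigr)$ near the diagonal and yields an operator in a Schatten class beyond $\mathfrak{S}_{1/\gamma,\infty}$, while the part supported in bounded $x$ is a pseudodifferential operator with symbol compactly supported in $x$ and decaying in $\xi$, so its eigenvalues decay exponentially; both are negligible. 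What is left is the Weyl quantization of the symbol $p(x,\xi)=\tfrac{1}{2^d(d-1)!}\,x^{-\gamma}\check{\phi}_d(\xi)$, $x>0$, where $\check{\phi}_d=\mathcal{F}^*\phi_d=\check{\phi}_1\ast\cdots\ast\check{\phi}_1$ ($d$ factors) is a positive Schwartz function.

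Finally, applying the Weyl-type spectral asymptotics for pseudodifferential operators of this class (as in Pushnitski and Yafaev, or a Birman--Solomyak-type result): because $p>0$ there is no negative spectrum to leading order, so $\lambda_n^-(H_{\mathbf{a}})=o(n^{-\gamma})$, while $N_+(\lambda):=\#\{n:\lambda_n^+(H_{\mathbf{a}})>\lambda\}\sim\bigl|\{(x,\xi):x>0,\ p(x,\xi)>\lambda\}\bigr|$ as $\lambda\to 0^+$. Since $\{x>0:\tfrac{x^{-\gamma}\check{\phi}_d(\xi)}{2^d(d-1)!}>\lambda\}$ has length $\bigl(\tfrac{\check{\phi}_d(\xi)}{2^d(d-1)!\,\lambda}\bigr)^{1/\gamma}$, one gets $N_+(\lambda)\sim\bigl(2^d(d-1)!\,\lambda\bigr)^{-1/\gamma}\int_{\mathbb{R}}\check{\phi}_d(\xi)^{1/\gamma}\dd{\xi}$; inverting this relation gives $\lambda_n^+(H_{\mathbf{a}})\sim\tfrac{1}{2^d(d-1)!}\bigl(\int_{\mathbb{R}}\check{\phi}_d^{1/\gamma}\bigr)^{\gamma}n^{-\gamma}=C_{d,\gamma}n^{-\gamma}$, which is \eqref{18}. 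The principal difficulty will be the chain of reductions with quantitative Schatten-class bounds on every error term --- especially the discrete-to-continuous passage for the weighted Hankel matrix of $\Gamma$ and the proof that replacing $\log(e^x+e^y)$ by $\tfrac{x+y}{2}$ perturbs only by an operator negligible at the scale $n^{-\gamma}$ --- along with having available a spectral-asymptotics theorem for pseudodifferential operators whose symbol decays like a power of $x$ rather than being compactly supported in $x$.
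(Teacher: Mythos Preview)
Your overall strategy --- reduce to a pseudodifferential operator via the substitution $t=e^x$ and then read off Weyl asymptotics --- matches the paper's, and your final computation of $C_{d,\gamma}$ is correct. The route differs, however, and one step is a genuine gap.

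The paper does \emph{not} first collapse $H_{\mathbf a}$ to the weighted Hankel $\Gamma$ and then invoke a ``discretization argument''. Instead it builds a model Hankel operator $\tilde H$ directly on $\ell^2(\mathbb{N}_0^d)$ with parameter sequence $\tilde a(j)=(\mathcal{L}w)(j)$, where $w(t)=\tfrac{1}{(d-1)!}t^{d-1}|\log t|^{-\gamma}\chi_0(t)$. This Laplace-transform choice gives an \emph{exact} factorisation $\tilde H=L^*L$ with $(L\mathbf{x})(t)=\sqrt{w(t)}\sum_{\mathbf j}e^{-|\mathbf j|t}\mathbf{x}(\mathbf j)$; by $L^*L\simeq LL^*$ one lands on an explicit integral operator on $L^2(\mathbb R_+)$ whose kernel involves $(1-e^{-(t+s)})^{-d}$ (the geometric series summed in closed form), so the discrete-to-continuous passage is a unitary equivalence modulo kernels, not an approximation. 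After peeling off the analytic part of $(1-e^{-u})^{-d}-u^{-d}$ (an operator in every $\mathbf S_p$) and applying $t=e^x$, one obtains a $\beta(X)\alpha(D)\beta(X)$ operator with $\beta=\sqrt{\check\phi_d}$ and $\alpha$ decaying like $|x|^{-\gamma}$ on one half-line --- the same symbol as yours with the roles of position and momentum swapped. The reduction $H_{\mathbf a}\simeq\Gamma$ to a one-variable weighted Hankel \emph{is} in the paper, but it is used only for the error $H_{\mathbf a}-\tilde H$, whose parameter sequence carries an extra factor $(\log j)^{-1}$; its $\mathbf S_{1/\gamma,\infty}^0$ membership is then obtained from Schatten bounds for weighted Hankel matrices via Besov classes (and, for $\gamma\ge\tfrac12$, iterated-difference conditions on the sequence, not just its size).

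Your gap is precisely the sentence ``a discretization argument of the kind used by Pushnitski and Yafaev'': that is not what they do --- the Laplace-transform factorisation \emph{is} their discrete-to-continuous device. Without it you must show that $\Gamma$ and your $\mathbf T$ differ by an $\mathbf S_{1/\gamma,\infty}^0$ operator, which for $\gamma\ge1$ is not a size estimate but a smoothness estimate on the matrix entries and requires essentially the machinery the paper develops in its error-term section. The same issue bites earlier: replacing $\sqrt{\mu_d(m)\mu_d(n)}$ by $(mn)^{(d-1)/2}/(d-1)!$ introduces a diagonal factor $I+K$ with $K$ diagonal and $K(j)=O(1/j)$, hence $K\in\mathbf S_{1,\infty}$ only; multiplying a merely bounded operator by $K$ gives an $\mathbf S_{1,\infty}$ correction, which is not in $\mathbf S_{1/\gamma,\infty}^0$ once $\gamma\ge1$. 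So that step, too, needs more than asserting a Schatten inclusion with $p<1/\gamma$.
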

\begin{remark}
It is worth to notice that relation (\ref{18}) gives (as expected) (\ref{17}) when $d=1$. For observe that $\check{\phi}_1(x)=\frac{2\pi}{\cosh(2\pi^2 x)}$. Then, by applying the change of variables $y=2\pi x$, we get
\begin{equation*}
	C_{1,\gamma}=\frac{1}{2}\left(\int_\mathbb{R}\check{\phi}_1^{\frac{1}{\gamma}}(x)\dd{x}\right)^\gamma=\left[\frac{1}{2\pi}\int_\mathbb{R}\left(\frac{\pi}{\cosh(\pi y)}\right)^{\frac{1}{\gamma}}\dd{y}\right]^\gamma=C_\gamma,
\end{equation*}
where $C_\gamma$ is the constant that is defined by (\ref{17}).
\end{remark}

\subsection{Main result}

A generalisation of Theorem \ref{thm} leads to the main result, Theorem \ref{thm0}. For any $\gamma>0$, define
\begin{equation}\label{8}
	M(\gamma):=\begin{cases}
		0, & \gamma\in(0,\frac{1}{2})\\
		\left[\gamma\right]+1, & \gamma\geq\frac{1}{2}
	\end{cases},
\end{equation}
where $\left[\gamma\right]=\max\{x\in\mathbb{Z}: \ x\leq\gamma\}$. In addition, for any sequence $a=\{a(j)\}_{j\in\mathbb{N}_0}$, define the sequence of iterated differences $a^{(m)}=\{a^{(m)}(j)\}_{j\in\mathbb{N}_0}$, where $m\in\mathbb{N}_0$, with
\begin{equation*}
	a^{(0)}=a \ \ \text{and } \ a^{(m)}(j)=a^{(m-1)}(j+1)-a^{(m-1)}(j), \ \forall j\in\mathbb{N}_0, \ \forall m\in\mathbb{N}.
\end{equation*}

\begin{thm}\label{thm0}
	Let $\gamma>0$, $b_1,b_{-1}\in\mathbb{R}$, and $a$ be a real valued sequence of $\mathbb{N}_0$, such that
	\begin{equation}\label{eq5}
		a(j)=\big(b_1+(-1)^jb_{-1}\big)j^{-d}(\log j)^{-\gamma}+g_1(j)+(-1)^jg_{-1}(j), \ \forall j\geq2,
	\end{equation}
	where both $g_1$ and $g_{-1}$ satisfy the following condition:
	\begin{equation*}
		g_{\pm 1}^{(m)}(j)=o\big(j^{-d-m} (\log j)^{-\gamma} \big), \ j\to+\infty,
	\end{equation*}
	for $m=0,1,\dots,M(\gamma)$. If $H_\mathbf{a}$ is the Hankel operator, where $\mathbf{a}(\mathbf{j})=a\left(|\mathbf{j}|\right), \ \forall \mathbf{j}\in\mathbb{N}_0^d$, then it is a self-adjoint, compact operator and its eigenvalues satisfy the following asymptotic law
	\begin{equation}\label{eq1}
		\lambda^\pm(H_\mathbf{a})=C^\pm n^{-\gamma}+o(n^{-\gamma}), \ n\to+\infty.
	\end{equation}
	The leading term coefficients are given by
	\begin{equation}\label{eq2_12}
		C^{\pm}=\left((b_1)^{\frac{1}{\gamma}}_{\pm}+(b_{-1})^{\frac{1}{\gamma}}_{\pm}\right)^{\gamma}C_{d,\gamma},
	\end{equation}
where $C_{d,\gamma}$ is defined in (\ref{18}) and $(x)_\pm:=\max\{0,\pm x\}$, for any $x\in\mathbb{R}$.
\end{thm}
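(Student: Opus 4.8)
The plan is to collapse the $d$-variable problem to a one-variable \emph{weighted} Hankel operator, to analyse the latter by the pseudodifferential scheme of \cite{pushnitski2015asymptotic}, and to recover the two-coefficient formula (\ref{eq2_12}) from a non-interaction principle. \emph{Step 1 (reduction to one variable).} For $k\in\mathbb N_0$ set $\Lambda_k=\{\mathbf j\in\mathbb N_0^d:|\mathbf j|=k\}$ and $N_k=\#\Lambda_k=\binom{k+d-1}{d-1}$, so $N_k=k^{d-1}/(d-1)!+O(k^{d-2})$. Since $\mathbf a(\mathbf i+\mathbf j)=a(|\mathbf i|+|\mathbf j|)$ depends on $\mathbf i,\mathbf j$ only through $|\mathbf i|,|\mathbf j|$, the operator $H_{\mathbf a}$ vanishes on the orthogonal complement of $\overline{\operatorname{span}}\{e_k:k\in\mathbb N_0\}$, $e_k:=N_k^{-1/2}\mathbf 1_{\Lambda_k}$, and a direct computation gives $\langle H_{\mathbf a}e_k,e_l\rangle=\sqrt{N_kN_l}\,a(k+l)$. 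Hence $H_{\mathbf a}\cong\widetilde H\oplus 0$, where $\widetilde H$ is the operator on $\ell^2(\mathbb N_0)$ with matrix $\widetilde H(k,l)=\sqrt{N_kN_l}\,a(k+l)$; in particular $\widetilde H$ is a real symmetric matrix (so $H_{\mathbf a}$ is self-adjoint once shown bounded), and $\lambda_n^\pm(H_{\mathbf a})=\lambda_n^\pm(\widetilde H)$ for all $n$. \emph{Step 2 (removing lower-order terms).} Write $\widetilde H=b_1\widetilde H_1+b_{-1}\widetilde H_{-1}+R$ with $\widetilde H_{\pm1}(k,l)=(\pm1)^{k+l}\sqrt{N_kN_l}\,w(k+l)$, $w(j)=j^{-d}(\log j)^{-\gamma}$ for $j\ge2$ (the values at $j=0,1$ only shift a finite-rank piece), and $R$ carrying the contributions of $g_{\pm1}$. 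The hypotheses $g_{\pm1}^{(m)}(j)=o(j^{-d-m}(\log j)^{-\gamma})$ for $0\le m\le M(\gamma)$ are exactly what places $R$ in the separable part $\mathbf S_{1/\gamma,\infty}^{0}$ of the weak Schatten ideal $\mathbf S_{1/\gamma,\infty}$ ($s_n=o(n^{-\gamma})$): one bounds $R$ by the standard trace-ideal estimates for Hankel-type matrices, summing by parts $M(\gamma)$ times to convert decay into cancellation, the threshold $M(\gamma)=[\gamma]+1$ (and $0$ for $\gamma<\tfrac12$) being the number of differences the embedding one uses actually requires. An $\mathbf S_{1/\gamma,\infty}^0$-perturbation does not affect $n^{-\gamma}$-asymptotics, so $R$ may be dropped; and conjugating by $V=\operatorname{diag}((-1)^k)$ gives $\widetilde H_{-1}=V\widetilde H_1 V$, hence $\widetilde H_{-1}\cong\widetilde H_1$.

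\emph{Step 3 (the heart: the model operator and the constant $C_{d,\gamma}$).} In $\widetilde H_1$ replace $\sqrt{N_kN_l}$ by its leading term $(kl)^{(d-1)/2}/(d-1)!$ (the difference again being $\mathbf S_{1/\gamma,\infty}^0$), and then replace the resulting matrix by the integral operator $\mathbf A$ on $L^2(0,\infty)$ with kernel
\[
\mathbf A(s,t)=\frac{1}{(d-1)!}\,\frac{(st)^{(d-1)/2}}{(s+t)^{d}}\,\bigl(\log(s+t)\bigr)^{-\gamma}\,\chi(s+t),
\]
$\chi$ a smooth cut-off vanishing near the origin; the quadrature error $\widetilde H_1-\mathbf A$ lies in $\mathbf S_{1/\gamma,\infty}^0$ (again by the iterated-difference estimates). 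The factor $(st)^{(d-1)/2}(s+t)^{-d}$ is homogeneous of degree $-1$, so the substitution $s=e^{u}$, $t=e^{v}$ together with the unitary $W:L^2(0,\infty)\to L^2(\mathbb R)$, $(Wf)(u)=e^{u/2}f(e^{u})$, turns $\mathbf A$ into an operator on $L^2(\mathbb R)$ with kernel $2^{-d}\phi_d(u-v)$ times the weight $(\log(e^{u}+e^{v}))^{-\gamma}$, which behaves like $u^{-\gamma}$ as $u\to+\infty$ (the cut-off $\chi$ confining the operator to that region). Hence, modulo $\mathbf S_{1/\gamma,\infty}^0$, $W\mathbf A W^*$ is the pseudodifferential operator $\tfrac1{(d-1)!}\,w_0(u)\,\operatorname{Op}\!\bigl(2^{-d}\check\phi_d(\xi)\bigr)$ on the half-line, $w_0(u)\sim u^{-\gamma}$, whose spectral asymptotics are precisely the Birman--Solomyak-type model analysed in \cite{pushnitski2015asymptotic}: since $\check\phi_d>0$ (a $d$-fold convolution of $\check\phi_1(x)=2\pi/\cosh(2\pi^2 x)>0$, so $\operatorname{Op}(\check\phi_d)\ge0$), one obtains $\lambda_n^+(\widetilde H_1)=C_{d,\gamma}\,n^{-\gamma}+o(n^{-\gamma})$ and $\lambda_n^-(\widetilde H_1)=o(n^{-\gamma})$ with $C_{d,\gamma}=\tfrac1{(d-1)!}\bigl(\int_{\mathbb R}(2^{-d}\check\phi_d(x))^{1/\gamma}\,\dd{x}\bigr)^{\gamma}$, which is (\ref{18}). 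In particular $\widetilde H_1\in\mathbf S_{1/\gamma,\infty}$, so $\widetilde H$ and $H_{\mathbf a}$ are bounded, self-adjoint and compact.

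\emph{Step 4 (superposition of the two singularities).} To pass from $\widetilde H_1,\widetilde H_{-1}$ to $\widetilde H=b_1\widetilde H_1+b_{-1}\widetilde H_{-1}$, note that under the reductions of Step 3 the modulation $V$ becomes, on $L^2(\mathbb R)$, multiplication by $e^{i\pi e^{u}}$; this phase oscillates arbitrarily fast as $u\to+\infty$, so a non-stationary-phase estimate shows that the operators corresponding to $\widetilde H_1$ and $\widetilde H_{-1}$ are asymptotically orthogonal (the products of one with the adjoint of the other lie in $\mathbf S_{1/(2\gamma),\infty}^0$). By the standard principle that the eigenvalue counting functions of asymptotically orthogonal compacts add to leading order, together with $n_{\pm}(s;cA)=n_{\pm}(s/c;A)$ for $c>0$, $n_{\pm}(s;cA)=n_{\mp}(s/|c|;A)$ for $c<0$, and $\lambda_n^-(\widetilde H_1)=o(n^{-\gamma})$, one finds $C^\pm=\bigl((b_1)_\pm^{1/\gamma}+(b_{-1})_\pm^{1/\gamma}\bigr)^{\gamma}C_{d,\gamma}$, i.e.\ (\ref{eq2_12}). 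Theorem \ref{thm} is the special case $b_1=1$, $b_{-1}=0$, $g_{\pm1}\equiv0$.

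\emph{Main obstacle.} The substantial work is all in Step 3: proving that the two passages — from the discrete weighted matrix $\widetilde H_1$ to the continuous model $\mathbf A$, and from $\mathbf A$ to the pseudodifferential operator — generate only errors in $\mathbf S_{1/\gamma,\infty}^0$. For small $\gamma$ the operators are far from trace class ($1/\gamma$ large), so crude Hilbert--Schmidt or Schur bounds do not suffice and one must extract genuine cancellation via the iterated differences $a^{(m)}$ — which is exactly why the cut-off $M(\gamma)$ appears in the hypotheses — and one must track every numerical constant ($2^{-d}$ from $\cosh^{d}$, $1/(d-1)!$ from $N_k$, and the normalisation $\log(e^{u}+e^{v})\sim u$) carefully enough that the leading coefficient is exactly (\ref{18}). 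The rest (Steps 1, 2, 4) is bookkeeping built on results already available in \cite{pushnitski2015asymptotic} and in the theory of trace ideals.
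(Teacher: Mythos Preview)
Your overall architecture matches the paper's: reduce to a one-variable weighted Hankel operator (the paper's \S\ref{RWHO}), split off the oscillating piece, build a continuous model and analyse it as a pseudodifferential operator $\beta(X)\alpha(D)\beta(X)$, establish asymptotic orthogonality of the two pieces, and control remainders in $\mathbf S_{1/\gamma,\infty}^0$ via the iterated-difference estimates (the paper's \S\ref{SCI}). The constant bookkeeping and the positivity of $\check\phi_d$ are also as in the paper.

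Where you diverge is in the construction of the model, and this is where the paper's main device lies. You take $\widetilde H_1(k,l)=\sqrt{N_kN_l}\,(k+l)^{-d}(\log(k+l))^{-\gamma}$ directly and then propose a \emph{quadrature} step to pass to a continuous kernel $\mathbf A(s,t)$ carrying the weight $(\log(s+t))^{-\gamma}$. This leaves two obstacles that you flag but do not resolve: (i) the discrete-to-continuous error has no evident reason to lie in $\mathbf S_{1/\gamma,\infty}^0$ --- nothing in the iterated-difference machinery compares a matrix to an integral operator; (ii) after the exponential change your weight $(\log(e^u+e^v))^{-\gamma}$ is not separable in $u,v$, so the operator is not yet of the form $\beta(X)\alpha(D)\beta(X)$ and a further approximation is required before Lemma~\ref{lem2_2} applies. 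The paper sidesteps both by choosing a different model: it sets $\tilde a(j)=(\mathcal Lw)(j)$ with $w(t)=\tfrac1{(d-1)!}t^{d-1}|\log t|^{-\gamma}\chi_0(t)$, so that $\tilde H_1=L_1^*L_1$ \emph{exactly}, with $L_1:\ell^2(\mathbb N_0^d)\to L^2(\mathbb R_+)$. Then $S_1=L_1L_1^*$ is already the integral operator with kernel $\sqrt{w(t)}(1-e^{-(s+t)})^{-d}\sqrt{w(s)}$; since $w$ is a function of one variable, the exponential substitution lands immediately in the $\alpha^{1/2}(X)T_d\alpha^{1/2}(X)$ form, and the only error is $(1-e^{-x})^{-d}-x^{-d}$, whose lower-order poles give operators in $\bigcap_p\mathbf S_p$ (Lemmas~\ref{R},~\ref{lem2'}). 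Asymptotic orthogonality is equally direct: $L_{-1}L_1^*$ has the smooth compactly supported kernel $\sqrt{w(t)}(1+e^{-(s+t)})^{-d}\sqrt{w(s)}$, hence lies in every $\mathbf S_p$ (Lemma~\ref{lem4'}); no oscillatory-phase argument is needed. In short, your plan is sound and you locate the difficulty correctly, but the Laplace-transform factorisation is precisely the device that dissolves your ``main obstacle'' rather than confronting it.
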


\subsection{Proof outline}

In order to derive the spectral asymptotics for the class of operators that were introduced in Theorem~\ref{thm0}, we follow the steps that are listed below. In the sequel, we give a brief description of each one of them.
\begin{itemize}
	\item Construction of a model operator (see \S\ref{MO}),
	\item reduction of the model operator to pseudo-differential operators (see \S\ref{PDO}),
	\item use of Weyl-type spectral asymptotics of the respective pseudo-differential operators (see \S\ref{WA}),
	\item reduction of the error terms to one-variable weighted Hankel operators (see \S\ref{RWHO}), and
	\item Schatten class inclusions of the error terms (see \S\ref{SCI}).
\end{itemize}

The construction of the model operator aims to give the leading term in the eigenvalue asymptotics. More precisely, the model operator will be a Hankel operator which behaves ``similarly" to the initial Hankel operator but whose eigenvalue asymptotics are retrieved much easier and explicitly. By examining for simplicity the case of $a$ given by (\ref{0}), the model operator will be a Hankel operator of the form $\tilde{H}:=H_{\tilde{\mathbf{a}}}$, with parameter sequence $\tilde{\mathbf{a}}(\mathbf{j})=\tilde{a}(|\mathbf{j}|)$, for all $\mathbf{j}\in\mathbb{N}_0^d$.
\begin{remark}
	From now on, objects related with the model operator will be declared with the tilde symbol; e.g. $\tilde{H}$, $\tilde{\mathbf{a}}$, $\tilde{a}$, etc.
\end{remark}
\noindent The sequence $\tilde{a}$ will be chosen to be the Laplace transform of a suitable function $w$, i.e.
\begin{equation*}
	\tilde{a}(j)=\big(\mathcal{L}w\big)(j)=\int_{0}^{+\infty}w(\lambda)e^{-\lambda j}\dd{\lambda}, \ \forall j\in\mathbb{N}_0.
\end{equation*}
The function $w$ is chosen in a way such that $\tilde{a}(j)\sim a(j)$, as $j\to+\infty$; i.e. $\frac{\tilde{a}(j)}{a(j)}\to1$, as $j\to+\infty$. The latter is obtained via a lemma for Laplace transform asymptotics.

In the sequel, the spectral analysis of the model operator, $\tilde{H}$, is reduced to that one of a pseudo-differential operator. To see this, consider the inner product
\begin{equation*}
	(\tilde{H}\mathbf{x},\mathbf{y})=\sum_{\mathbf{i},\mathbf{j}\in\mathbb{N}_0^d}\tilde{a}(|\mathbf{i}+\mathbf{j}|)\mathbf{x}(\mathbf{j})\overline{\mathbf{y}}(\mathbf{i}).
\end{equation*}
By using the fact that $\tilde{a}(j)=\big(\mathcal{L}w\big)(j)$, we can swap summation and integration to obtain
\begin{equation*}
	(\tilde{H}\mathbf{x},\mathbf{y})=\int_{0}^{+\infty} \big(L\mathbf{x}\big)(t)\overline{\big(L\mathbf{y}\big)}(t)\dd{t},
\end{equation*}
where $L:\ell^2(\mathbb{N}_0^d)\to L^2(\mathbb{R}_+)$ is given by
\begin{equation*}
	\big(L\mathbf{x}\big)(t)=\sqrt{w(t)}\sum_{\mathbf{j}\in\mathbb{N}_0^d}e^{-|\mathbf{j}|t}\mathbf{x}(\mathbf{j}), \ \forall t\in\mathbb{R}_+, \ \forall \mathbf{x}\in\ell^2(\mathbb{N}_0^d);
\end{equation*}
note that $\mathbb{R}_+:=(0,+\infty)$.
\begin{remark}
	Notice that in order $L$ to be well-defined, $w$ has to be non-negative. For sake of simplicity, in this introduction, we assume that this is true and the general case is addressed properly in the proofs.
\end{remark}
\noindent Therefore, $\tilde{H}$ can be expressed as a product of two operators, $\tilde{H}=L^*L$, and we can apply the following lemma (\cite[\S8.1, Theorem 4]{birman2012spectral}).
\begin{lem}\label{lem}
Let $L$ be a linear bounded operator, defined on a Hilbert space $\mathscr{H}$. Then, the restrictions $L^*L\upharpoonright(\mathrm{Ker}L^*L)^\perp$ and $LL^*\upharpoonright(\mathrm{Ker}LL^*)^\perp$ are unitarily equivalent.
\end{lem}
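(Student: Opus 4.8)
The plan is to deduce the statement from the polar decomposition of the bounded operator $L$. First I would record the elementary identity $\|Lx\|^2=(L^*Lx,x)$, valid for every $x\in\mathscr{H}$, which shows at once that $\mathrm{Ker}\,L^*L=\mathrm{Ker}\,L$; applying the same remark to $L^*$ in place of $L$ gives $\mathrm{Ker}\,LL^*=\mathrm{Ker}\,L^*$. Taking orthogonal complements and using $(\mathrm{Ker}\,T)^\perp=\overline{\mathrm{ran}\,T^*}$ for a bounded operator $T$, we obtain
\begin{equation*}
	(\mathrm{Ker}\,L^*L)^\perp=(\mathrm{Ker}\,L)^\perp=\overline{\mathrm{ran}\,L^*}=:\mathscr{H}_1,\qquad (\mathrm{Ker}\,LL^*)^\perp=(\mathrm{Ker}\,L^*)^\perp=\overline{\mathrm{ran}\,L}=:\mathscr{H}_2.
\end{equation*}
Thus the two restrictions in the statement act on $\mathscr{H}_1$ and $\mathscr{H}_2$ respectively; moreover, since $L^*L$ and $LL^*$ are self-adjoint with kernels $\mathscr{H}_1^\perp$ and $\mathscr{H}_2^\perp$, each of $\mathscr{H}_1,\mathscr{H}_2$ reduces the corresponding operator.

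Next I would invoke the polar decomposition $L=U|L|$, where $|L|=(L^*L)^{1/2}\ge 0$ and $U$ is a partial isometry whose initial space is $\overline{\mathrm{ran}\,|L|}$ and whose final space is $\overline{\mathrm{ran}\,L}$. Since $\||L|x\|^2=(L^*Lx,x)=\|Lx\|^2$, we have $\mathrm{Ker}\,|L|=\mathrm{Ker}\,L$, so the initial space of $U$ is exactly $(\mathrm{Ker}\,L)^\perp=\mathscr{H}_1$; consequently the restriction $U_0:=U\!\upharpoonright\!\mathscr{H}_1$ is a unitary operator from $\mathscr{H}_1$ onto $\mathscr{H}_2$, with inverse $U^*\!\upharpoonright\!\mathscr{H}_2$. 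From $L=U|L|$ and $|L|^2=L^*L$ one gets $LL^*=U|L|^2U^*=U(L^*L)U^*$. Restricting this identity to $\mathscr{H}_2$, and using that $U^*$ maps $\mathscr{H}_2$ onto $\mathscr{H}_1$ while $\mathscr{H}_1$ is invariant under $L^*L$, we arrive at
\begin{equation*}
	LL^*\!\upharpoonright\!\mathscr{H}_2=U_0\,\bigl(L^*L\!\upharpoonright\!\mathscr{H}_1\bigr)\,U_0^*,
\end{equation*}
which is precisely the claimed unitary equivalence.

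The argument is completely standard, so there is no real obstacle; the only point needing a little care is the bookkeeping of the subspaces — one must verify that the initial and final spaces of the partial isometry $U$ are exactly $(\mathrm{Ker}\,L)^\perp$ and $\overline{\mathrm{ran}\,L}$, so that $U_0$ is a genuine unitary between $\mathscr{H}_1$ and $\mathscr{H}_2$ and not merely a partial isometry on the ambient space. If one prefers to avoid citing the polar decomposition in this exact form, an equivalent route is to first pass to $L_0:=L\!\upharpoonright\!\mathscr{H}_1:\mathscr{H}_1\to\mathscr{H}_2$, note that $L_0$ is injective with dense range and satisfies $L_0^*L_0=L^*L\!\upharpoonright\!\mathscr{H}_1$ and $L_0L_0^*=LL^*\!\upharpoonright\!\mathscr{H}_2$, and then apply the polar decomposition to $L_0$, for which the associated partial isometry is automatically unitary.
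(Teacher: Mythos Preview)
Your proof is correct and is the standard argument via polar decomposition. The paper does not supply its own proof of this lemma; it merely cites it as \cite[\S8.1, Theorem~4]{birman2012spectral}, so there is nothing to compare against.
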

\noindent\begin{remark}
We will denote this equivalence by $\simeq$; e.g. $L^*L\simeq LL^*$.
\end{remark}
\noindent Thus, $\tilde{H}$ is unitarily equivalent (modulo kernels) to $LL^*:L^2(\mathbb{R}_+)\to L^2(\mathbb{R}_+)$. Finally, by an exponential change of variable, $LL^*$ is proved to be unitarily equivalent (modulo kernels) to a pseudo-differential operator $\beta(X)\alpha(D)\beta(X)$, where $D$ is the differentiation operator in $L^2(\mathbb{R})$, $Df=-if'$, and $X$ is the multiplication operator (in $L^2(\mathbb{R})$) by the function $\mathrm{id}(x)=x$. Then, by exploiting a Weyl-type spectral asymptotics formula for the operator $\beta(X)\alpha(D)\beta(X)$, we retrieve its eigenvalue asymptotics and thus, those of $\tilde{H}$.

\begin{remark}
The technique of considering the inner product $(H_\mathbf{a}\mathbf{x},\mathbf{y})$ and changing the order of summation and integration was also applied by Widom in \cite{widom1966hankel} for one-variable Hankel operators. In order to derive the eigenvalue asymptotics, Widom also applied Lemma \ref{lem}. This yielded the equivalence to the pseudo-differential operator that we would obtain, if we followed the steps that are described above (for $d=1$). The same equivalence, but in greater generality, is also obtained by Yafaev in \cite[Theorem 7.7]{yafaev2017quasi}.
\end{remark}

Finally, the initial Hankel operator, $H_\mathbf{a}$, can be expressed as a sum of operators, $H_\mathbf{a}=\tilde{H}+(H_\mathbf{a}-\tilde{H})$. Having obtained the eigenvalue asymptotics for $\tilde{H}$, the next step is to prove that the spectral contribution of the operator $H_\mathbf{a}-\tilde{H}$ is negligible, compared to that one of $\tilde{H}$. This will be achieved by proving certain Schatten-Lorentz class inclusions for $H_\mathbf{a}-\tilde{H}$. These inclusions depend on the range of the exponent $\gamma$ in (\ref{0}) and are obtained by a combination of interpolation and reduction to one-variable weighted Hankel operators.

\subsection{List of notation}

For the reader's convenience, we close our introduction by summarising the introduced notation.\\ \ \\
\noindent\underline{Set notation}: Let $\mathbb{R}$ be the set of real numbers, $\mathbb{Z}$ the set of integers, $\mathbb{N}=\{1,2,3,\dots\}$ be the set of natural numbers, and $\mathbb{N}_0=\mathbb{N}\cup\{0\}$. In addition, $\mathbb{R}_+=(0,+\infty)$. We denote by $\mathbb{C}$ the set of complex numbers. Then $\mathbb{D}=\{z\in\mathbb{C}: \ \left|z\right|<1\}$ and $\mathbb{T}=\{z\in\mathbb{C}: \ \left|z\right|=1\}$. Moreover, $\mathbb{T}$ can be identified with the interval $[0,1)$, via the map $t\mapsto e^{2\pi i t}$, for all $t\in[0,1)$. For any $d\geq2$, we can define $d$-Cartesian products of the aforementioned sets; e.g. $\mathbb{R}^d=\{\mathbf{x}=(x_1,x_2,\dots,x_d): \ x_i\in\mathbb{R}, \ \text{for } i=1,2,\dots,d\}$.\\ \ \\
\noindent\underline{Dimension notation}: We use the Roman (standard) typeface for one-dimensional/variable objects and boldface letters for $d$-dimensional/variable ones. For example, let $f(x)$ describe a function defined on $\mathbb{R}$ and $\mathbf{a}=\{\mathbf{a}(\mathbf{j})\}_{\mathbf{j}\in\mathbb{N}_0^d}$ be a $d$-variable sequence.\\ \ \\
\noindent\underline{Sequence notation}: We say that two (real valued) sequences $\{a(j)\}_{j\in\mathbb{N}_0}$ and $\{b(j)\}_{j\in\mathbb{N}_0}$ present the same asymptotic behaviour at infinity, and denote by $a(j)\sim b(j)$, as $j\to+\infty$, when $\frac{a(j)}{b(j)}\to1$, as $j\to+\infty$. For a (complex valued) sequence $a=\{a(j)\}_{j\in\mathbb{N}_0}$, define the sequence of iterated differences $a^{(m)}=\{a^{(m)}(j)\}_{j\in\mathbb{N}_0}$, where $m\in\mathbb{N}_0$, with
\begin{equation*}
	a^{(0)}=a \ \ \text{and } \ a^{(m)}(j)=a^{(m-1)}(j+1)-a^{(m-1)}(j), \ \forall j\in\mathbb{N}_0, \ \forall m\in\mathbb{N}.
\end{equation*}

\noindent\underline{Number notation}: For any real number $x$, we define its integer part $\left[x\right]=\max\{m\in\mathbb{Z}: \ m\leq x\}$ and its positive (resp. negative) part $\left(x\right)_+=\max\{0,x\}$ (resp. $(x)_-=\max\{0,-x\}$). Furthermore, let $\left<x\right>=\sqrt{1+x^2}$. For any real numbers $x$ and $y$, we write $x\lesssim y$ when there exists a non-zero number $c$ such that $x\leq cy$. Finally, for any $d\geq2$,
\begin{equation*}
	\left|\mathbf{j}\right|=\sum_{i=1}^d j_i, \ \forall \mathbf{j}=(j_1,j_2,\dots,j_d)\in\mathbb{N}_0^d.
\end{equation*}
\noindent\underline{Fourier transform}: For a function $\phi:\mathbb{T}\to\mathbb{C}$ the sequence of its Fourier coefficients $\{\big(\Phi\phi\big)(n)\}_{n\in\mathbb{Z}}$ is given by
\begin{equation*}
	\big(\Phi\phi\big)(n)=\int_0^1 \phi(t) e^{-2\pi int}\dd{t}, \ \forall n\in\mathbb{Z}.
\end{equation*}
The Fourier transform $\mathcal{F}f$ of a function $f:\mathbb{R}\to\mathbb{C}$ is given by (\ref{F}). We denote by $\mathcal{F}^*$ its inverse and $\check{f}=\mathcal{F}^*f$.\\ \ \\
\noindent\underline{Operator notation}: For any operator $A$, let $A\upharpoonright S$ be the restriction of $A$ to a subset $S$ of its domain. Two operators $A$ and $B$, in general defined on different spaces, will be called unitarily equivalent modulo kernels (write $A\simeq B$), when they have unitarily equivalent non-zero parts. Namely, when there exists a unitary operator $U$ such that
\begin{equation*}
	A\upharpoonright(\mathrm{Ker}A)^{\perp}=U^*B\upharpoonright(\mathrm{Ker}B)^{\perp}U.
\end{equation*}
We denote by $H_\mathbf{a}$ (resp. $H_a$) all the $d$-variable (resp. one-variable) Hankel operators with parameter sequence $\mathbf{a}$ (resp. $a$). Moreover, when $H_{\mathbf{a}}$ has been defined, objects related with the model operator $\tilde{H}$ that corresponds to $H_{\mathbf{a}}$ will be indicated with the tilde symbol; e.g. $\tilde{\mathbf{a}}$ will refer to the parameter sequence of the model operator, so that $\tilde{H}=H_{\tilde{\mathbf{a}}}$. Finally, for weighted Hankel operators, we use the capital gamma; e.g. $\Gamma$, $\Gamma_a^{\alpha,\beta}$, etc. (see \S\ref{WHO} for the relevant definitions).\\ \ \\
\noindent\underline{Eigenvalue notation}: Let $A$ be an operator and $\{\lambda_n^+(A)\}_{n\in\mathbb{N}}$ be the sequence of its positive eigenvalues. Then $\lambda_n^-(A)=\lambda_n^+(-A)$, $\forall n\in\mathbb{N}$.

\section{Preliminaries}

\subsection{Besov classes}\label{BC}

We define Besov classes of analytic functions on the unit circle $\mathbb{T}$. If $C^\infty_c(\mathbb{R})$ is the set of infinitely many times differentiable functions on $\mathbb{R}$, with compact support, let $v$ be a $C^{\infty}_c(\mathbb{R})$ function, such that $\mathrm{supp}(v)=[2^{-1},2]$, $v(1)=1$, and $v([2^{-1},2])=[0,1]$; notice that $v(2^{-1})=v(2)=0$. Then consider a sequence of $C^{\infty}_c(\mathbb{R})$ non-negative valued functions $\{v_n\}_{n\in\mathbb{N}}$, such that,
\begin{equation*}
	v_n(t)=v\left( \frac{t}{2^n} \right), \ \forall n\in\mathbb{N},
\end{equation*}
for any $t\in\mathbb{R}$, and
\begin{equation*}
	\sum_{n\geq 0}v\left(\frac{t}{2^n}\right)=1, \ \forall t\geq 1.
\end{equation*}
Ensuing, define the polynomials
\begin{equation}\label{1'}
	V_0(z)=\overline{z}+1+z, \ \forall z\in\mathbb{T},
\end{equation}
and, for every $n\in\mathbb{N}$,
\begin{equation}\label{2}
	V_n(z)=\sum_{j\in\mathbb{N}}v_n(j)z^j= \sum_{j=2^{n-1}}^{2^{n+1}}v_n(j)z^j, \ \forall z\in\mathbb{T}.
\end{equation}
Then we say that an analytic function $f$ of $\mathbb{T}$ belongs to the Besov class $B^{p}_{q,r}$ if and only if
\begin{equation*}
	\|f\|_{B^{p}_{q,r}}:=\left( \sum_{n\in\mathbb{N}_0}2^{npr}\|f*V_n\|^r_q \right)^{\frac{1}{r}}<+\infty.
\end{equation*}

\noindent The lemma below can be found in \cite[Lemma 4.6]{pushnitski2015sharp}.

\begin{lem}\label{lem_2}
Assume that $\gamma\geq\frac{1}{2}$ and let $M(\gamma)$ be as defined in (\ref{8}). Moreover, let $\{a(j)\}_{j\in\mathbb{N}_{0}}$ be a sequence of complex numbers which satisfies (\ref{eq_3}) and consider the function
\begin{equation*}
	\phi(z)=\sum_{j\in\mathbb{N}_0}a(j)z^j, \ \forall z\in\mathbb{T}.
\end{equation*}
If $V_n$ are as defined in (\ref{2}), then, for every $q>\frac{1}{M(\gamma)}$ and every $n\in\mathbb{N}$ such that $2^{n-1}\geq M(\gamma)$,
\begin{equation}\label{eq3}
	\left\| \phi*V_n \right\|_{\infty}\leq \sum_{j=2^{n-1}}^{2^{n+1}}|a(j)|,
\end{equation}
and
\begin{equation}\label{eq4}
	2^n\left\| \phi*V_n \right\|_q^q\leq C_q \left( \sum_{m=0}^{M(\gamma)}\sum_{j=2^{n-1}-M(\gamma)}^{2^{n+1}}(1+j)^m \left| a^{(m)}(j) \right| \right)^q,
\end{equation}
for some positive constant $C_q$, depending only on $q$.
\end{lem}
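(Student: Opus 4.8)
The plan is to prove both estimates directly from the Fourier expansion of $\phi*V_n$. Since $V_n$ is the polynomial $\sum_{j=2^{n-1}}^{2^{n+1}}v_n(j)z^j$ and $\phi$ has only non‑negative frequencies with $\big(\Phi\phi\big)(j)=a(j)$ for $j\ge0$ (this is where the standing hypothesis (\ref{eq_3}) on $a$ enters, to guarantee that $\phi$, and hence $\phi*V_n$, is a well-defined object on $\mathbb{T}$), one has, writing $z=e^{2\pi it}$,
\[
\big(\phi*V_n\big)(e^{2\pi it})=\sum_{j=2^{n-1}}^{2^{n+1}}a(j)\,v_n(j)\,e^{2\pi ijt},
\]
a trigonometric polynomial with finitely supported coefficient sequence $u(j):=a(j)v_n(j)$. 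Estimate (\ref{eq3}) is then immediate: since $0\le v_n(j)\le1$, the triangle inequality gives $\|\phi*V_n\|_\infty\le\sum_{j=2^{n-1}}^{2^{n+1}}|a(j)|\,|v_n(j)|\le\sum_{j=2^{n-1}}^{2^{n+1}}|a(j)|$.

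For the harder estimate (\ref{eq4}) the heart of the matter is a pointwise decay bound for $\phi*V_n$ away from $t=0$. Starting from the elementary identity $(1-e^{2\pi it})\sum_j u(j)e^{2\pi ijt}=e^{2\pi it}\sum_j u^{(1)}(j)e^{2\pi ijt}$ (a one-line reindexing of the two finite sums, using $u^{(1)}(j-1)=u(j)-u(j-1)$) and iterating, one obtains the exact formula
\[
\big(\phi*V_n\big)(e^{2\pi it})=\Big(\frac{e^{2\pi it}}{1-e^{2\pi it}}\Big)^{m}\,\sum_{j}u^{(m)}(j)\,e^{2\pi ijt},\qquad m=0,1,2,\dots,
\]
valid for $e^{2\pi it}\neq1$. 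Since $|1-e^{2\pi it}|=2|\sin\pi t|\geq4|t|$ for $|t|\leq\tfrac12$, this gives $|(\phi*V_n)(e^{2\pi it})|\lesssim|t|^{-m}\sum_j|u^{(m)}(j)|$. The coefficient sum is controlled by the discrete Leibniz rule: $u^{(m)}(j)$ is a finite linear combination of terms $a^{(l)}(j')\,v_n^{(m-l)}(j'')$ with $0\le l\le m$ and $|j'-j|,|j''-j|\le m$; combined with $|v_n^{(k)}(j)|\lesssim 2^{-nk}$ (valid because $v\in C^\infty_c(\mathbb R)$ and $v_n(j)=v(j2^{-n})$, with $v_n^{(k)}$ supported within $O(M(\gamma))$ of $[2^{n-1},2^{n+1}]$ and, by the flatness of $v$ at the endpoints of its support, smaller than any power of $2^{-n}$ for $j$ within $O(M(\gamma))$ of those endpoints) and with $1+j\asymp 2^n$ on the bulk of the support of $v_n$, one gets, writing $A:=\sum_{m=0}^{M(\gamma)}\sum_{j=2^{n-1}-M(\gamma)}^{2^{n+1}}(1+j)^m|a^{(m)}(j)|$ for the quantity on the right of (\ref{eq4}) and noting that the index shifts of size $\le M(\gamma)$ are exactly what force its lower summation limit $2^{n-1}-M(\gamma)$,
\[
\sum_j|u^{(m)}(j)|\lesssim 2^{-nm}A\quad(0\le m\le M(\gamma)),\qquad\text{hence}\qquad\big|\big(\phi*V_n\big)(e^{2\pi it})\big|\lesssim\big(2^n|t|\big)^{-m}A\ \ (0\le m\le M(\gamma)).
\]
In particular $m=0$ gives $|(\phi*V_n)(e^{2\pi it})|\lesssim A$ everywhere, and $m=M(\gamma)$ gives the decay $|(\phi*V_n)(e^{2\pi it})|\lesssim(2^n|t|)^{-M(\gamma)}A$.

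To conclude, I would split $2^n\|\phi*V_n\|_q^q=2^n\int_{-1/2}^{1/2}|(\phi*V_n)(e^{2\pi it})|^q\,\dd{t}$ at $|t|=2^{-n}$. On $|t|\leq2^{-n}$ the bound $|(\phi*V_n)|\lesssim A$ contributes $\lesssim 2^n\cdot A^q\cdot 2^{-n}=A^q$. On $2^{-n}\leq|t|\leq\tfrac12$ the decay bound contributes $\lesssim 2^n A^q\int_{2^{-n}}^{1/2}(2^nt)^{-M(\gamma)q}\,\dd{t}=A^q\int_{1}^{2^{n-1}}s^{-M(\gamma)q}\,\dd{s}\leq\frac{A^q}{M(\gamma)q-1}$, where the last step uses exactly the hypothesis $q>1/M(\gamma)$, i.e. $M(\gamma)q>1$, so that the integral is bounded uniformly in $n$. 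Adding the two contributions gives (\ref{eq4}) with a constant depending only on $q$ (one may even use $m=\lfloor 1/q\rfloor+1\leq M(\gamma)$ in place of $M(\gamma)$ throughout to make the dependence manifestly only on $q$, the fixed cut-off $v$ being absorbed).

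The routine part is the discrete Leibniz bookkeeping leading to $\sum_j|u^{(m)}(j)|\lesssim 2^{-nm}A$ and the negligible edge contributions of $v_n$; the one conceptual point deserving care is the exact summation-by-parts identity above. Its advantage over a naive iterated Abel summation is that the resulting kernel is precisely $(e^{2\pi it}/(1-e^{2\pi it}))^m$, with clean decay $|t|^{-m}$ and no spurious polynomial factors in $2^n$, which is what allows the decay exponent to be pushed all the way up to $M(\gamma)$ — exactly what (\ref{eq4}) requires in the regime $q<1$ (that is, $\gamma\geq1$). I do not anticipate any genuine obstacle beyond tracking the constants uniformly in $n$.
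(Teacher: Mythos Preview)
The paper does not prove this lemma at all: it is quoted verbatim from \cite[Lemma~4.6]{pushnitski2015sharp} and no argument is given. Your proposal is therefore not a comparison target but an independent proof, and it is essentially correct --- the $m$-fold Abel summation identity, the discrete Leibniz expansion of $u^{(m)}=(av_n)^{(m)}$, the bound $|v_n^{(k)}|\lesssim 2^{-nk}$, and the dyadic split of the $L^q$ integral at $|t|=2^{-n}$ together constitute exactly the standard proof of such Bernstein--type localisation estimates, and this is almost certainly what the cited reference does as well.

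Two small remarks. First, the ``flatness of $v$ at the endpoints'' argument is correct as you state it (since $v\in C^\infty_c$ forces $v(x)=O((x-\tfrac12)^N)$ for every $N$ near $x=\tfrac12$, and likewise at $x=2$), but it is not strictly needed: if you use the Leibniz form $u^{(m)}(j)=\sum_{l}\binom{m}{l}a^{(l)}(j)\,v_n^{(m-l)}(j+l)$, the index $j'$ attached to $a^{(l)}$ naturally ranges down to $2^{n-1}-M(\gamma)$ (this is why that lower limit appears in (\ref{eq4})), and since $2^{n-1}\geq M(\gamma)$ one still has $1+j'\asymp 2^n$ uniformly once $n$ is bounded below away from the finitely many borderline values --- the remaining values of $n$ are handled by the trivial bound $\|\phi*V_n\|_q\le\|\phi*V_n\|_\infty\le A$. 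Second, your closing observation that one may work with $M=\lfloor 1/q\rfloor+1\le M(\gamma)$ throughout is exactly what makes the constant depend only on $q$ (and on the fixed cut-off $v$), with the stated right-hand side only growing when $M$ is replaced by the larger $M(\gamma)$; this is a clean way to see why the lemma is phrased with ``$C_q$ depending only on $q$''.
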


\subsection{Schatten classes}

Consider a compact operator $T$ and the sequence of its singular values $\{s_n\}_{n\in\mathbb{N}}$; i.e. the sequence of (positive) eigenvalues of $\sqrt{T^*T}$. Denote by $\mathbf{S}_\infty$ the space of compact operators. For $p\in(0,+\infty)$, we define the Schatten class $\mathbf{S}_p$, the Schatten-Lorentz classes $\mathbf{S}_{p,q}$ and $\mathbf{S}_{p,\infty}$, and the class $\mathbf{S}_{p,\infty}^{0}$ by the following conditions:
\begin{equation*}
	\begin{array}{ccc}
		T\in\mathbf{S}_p & \Leftrightarrow & \left\|T\right\|_{\mathbf{S}_p}:=\left(\displaystyle\sum_{n\in\mathbb{N}}s_n^p\right)^{\frac{1}{p}}<+\infty\\ \ \\
		T\in\mathbf{S}_{p,q} & \Leftrightarrow & \left\|T\right\|_{\mathbf{S}_{p,q}}:=\left(\sum_{n\in\mathbb{N}}\frac{(n^{\frac{1}{p}}s_n)^q}{n}\right)^{\frac{1}{q}}<+\infty\\
		T\in\mathbf{S}_{p,\infty} & \Leftrightarrow & \left\|T\right\|_{\mathbf{S}_{p,\infty}}:=\displaystyle\sup_{n\in\mathbb{N}}n^{\frac{1}{p}}s_n<+\infty\\ \ \\
		T\in\mathbf{S}_{p,\infty}^{0} & \Leftrightarrow & \displaystyle\lim_{n\to+\infty}n^{\frac{1}{p}}s_n=0.
	\end{array}
\end{equation*}
Recall that $\mathbf{S}_{p}\subset\mathbf{S}_{p,\infty}^0\subset\mathbf{S}_{p,\infty},$ with $\|\cdot\|_{\mathbf{S}_{p,\infty}}\leq\|\cdot\|_{\mathbf{S}_p}, \ \forall p>0$.
\noindent Finally, Lemma \ref{lem2_1} \cite[Corollary 2.2]{gohberg1978introduction} suggests that perturbations with $\mathbf{S}_{p,\infty}^0$ operators leave the eigenvalue asymptotics unaffected.

\begin{lem}[K. Fan]\label{lem2_1}
Let $S$ and $T$ be two compact, self-adjoint operators on a Hilbert space. If
\begin{equation*}
	\begin{array}{rclr}
		\lambda_{n}^{\pm}(S)=K^{\pm}n^{-\gamma}+o(n^{-\gamma}), & \text{and} & \ s_n(T)=o(n^{-\gamma}), & \ n\to+\infty,
	\end{array}
\end{equation*}
then
$$\lambda_{n}^{\pm}(S+T)=K^{\pm}n^{-\gamma}+o(n^{-\gamma}),$$
for some constants $K^{\pm}$.
\end{lem}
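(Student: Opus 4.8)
\textbf{Proof proposal for Lemma~\ref{lem2_1} (K.~Fan).}

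The plan is to prove the statement purely from the Ky~Fan inequalities for singular values, which in the self-adjoint setting read
\begin{equation*}
	\lambda_{m+n-1}^{\pm}(S+T)\leq \lambda_{m}^{\pm}(S)+s_{n}(T),\qquad \lambda_{m}^{\pm}(S)\leq \lambda_{m+n-1}^{\pm}(S+T)+s_{n}(T),
\end{equation*}
valid for all $m,n\in\mathbb{N}$, where the second inequality follows from the first by writing $S=(S+T)+(-T)$ and noting $s_n(-T)=s_n(T)$. Since both $S$ and $T$ are compact, so is $S+T$, hence $\lambda_n^{\pm}(S+T)\to0$ and the sequences involved are well defined. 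We want to show $n^{\gamma}\lambda_n^{\pm}(S+T)\to K^{\pm}$, given $n^{\gamma}\lambda_n^{\pm}(S)\to K^{\pm}$ and $n^{\gamma}s_n(T)\to0$.

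First I would fix $\varepsilon\in(0,1)$ and split the index into a ``slow'' part and a ``fast'' part: take $m=m(n)=\lceil(1-\varepsilon)n\rceil$ and $k=k(n)=n-m+1$, so that $m+k-1=n$ and $k\sim\varepsilon n\to+\infty$. Applying the first Ky~Fan inequality with this choice gives
\begin{equation*}
	\lambda_{n}^{\pm}(S+T)\leq \lambda_{m(n)}^{\pm}(S)+s_{k(n)}(T).
\end{equation*}
Multiplying by $n^{\gamma}$ and using $n^{\gamma}=(n/m(n))^{\gamma}\,m(n)^{\gamma}\to (1-\varepsilon)^{-\gamma}\cdot(\text{limit of }m^{\gamma}\lambda_m^{\pm}(S))$ together with $n^{\gamma}s_{k(n)}(T)=(n/k(n))^{\gamma}\,k(n)^{\gamma}s_{k(n)}(T)\to \varepsilon^{-\gamma}\cdot 0=0$, one obtains $\limsup_{n\to\infty}n^{\gamma}\lambda_n^{\pm}(S+T)\leq (1-\varepsilon)^{-\gamma}K^{\pm}$. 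For the matching lower bound I would use the second Ky~Fan inequality with the \emph{same} splitting, $\lambda_{m(n)}^{\pm}(S)\leq \lambda_{n}^{\pm}(S+T)+s_{k(n)}(T)$, which rearranges to $\lambda_{n}^{\pm}(S+T)\geq \lambda_{m(n)}^{\pm}(S)-s_{k(n)}(T)$; multiplying by $n^{\gamma}$ and taking limits as above yields $\liminf_{n\to\infty}n^{\gamma}\lambda_n^{\pm}(S+T)\geq (1-\varepsilon)^{-\gamma}K^{\pm}$. Since $\varepsilon\in(0,1)$ is arbitrary and $(1-\varepsilon)^{-\gamma}\to1$ as $\varepsilon\to0^{+}$, combining the two estimates gives $n^{\gamma}\lambda_n^{\pm}(S+T)\to K^{\pm}$, i.e. $\lambda_n^{\pm}(S+T)=K^{\pm}n^{-\gamma}+o(n^{-\gamma})$, as claimed. (If some $K^{\pm}=0$ the argument is the same, with the conclusion $\lambda_n^{\pm}(S+T)=o(n^{-\gamma})$.)

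The only genuinely delicate point is bookkeeping with the ceiling function: one must check that $m(n)\to\infty$, that $k(n)=n-\lceil(1-\varepsilon)n\rceil+1\sim\varepsilon n$, and that $n/m(n)\to(1-\varepsilon)^{-1}$ and $n/k(n)\to\varepsilon^{-1}$, all of which are elementary. Everything else is a direct consequence of the Ky~Fan inequalities, which themselves follow from the min–max characterisation of $\lambda_m^{\pm}$ and $s_n$; I would either cite these or include a one-line reminder. No compactness subtlety arises beyond the already-noted fact that $S,T$ compact forces $S+T$ compact, so all eigenvalue sequences decay to zero and the manipulations above are legitimate.
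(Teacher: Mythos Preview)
The paper does not prove this lemma; it simply cites \cite[Corollary 2.2]{gohberg1978introduction}. So there is no ``paper's own proof'' to compare against, and your plan --- deduce the result from the Weyl/Ky~Fan inequalities by the standard $\varepsilon$-splitting of the index --- is exactly the classical argument behind that reference.

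However, your lower-bound step contains a genuine error. The ``second Ky~Fan inequality'' you state,
\[
\lambda_{m}^{\pm}(S)\;\leq\;\lambda_{m+n-1}^{\pm}(S+T)+s_{n}(T),
\]
is \emph{not} what follows from writing $S=(S+T)+(-T)$; applying Weyl to that decomposition gives
\[
\lambda_{m+n-1}^{\pm}(S)\;\leq\;\lambda_{m}^{\pm}(S+T)+s_{n}(T),
\]
with the larger index on $S$, not on $S+T$. Your version is in fact false (take $T=0$ and any $S$ with strictly decreasing positive eigenvalues). This also explains the internal inconsistency of your conclusion: you obtain both $\limsup\le(1-\varepsilon)^{-\gamma}K^{\pm}$ and $\liminf\ge(1-\varepsilon)^{-\gamma}K^{\pm}$, which would pin the limit at $(1-\varepsilon)^{-\gamma}K^{\pm}$ for \emph{every} $\varepsilon\in(0,1)$ --- impossible unless $K^{\pm}=0$.

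The fix is minor. For the lower bound, apply the correct inequality with first index $n$ and second index $k=k(n)\sim\varepsilon n$:
\[
\lambda_{n+k-1}^{\pm}(S)\;\leq\;\lambda_{n}^{\pm}(S+T)+s_{k}(T),
\]
so that $\lambda_{n}^{\pm}(S+T)\ge\lambda_{n+k-1}^{\pm}(S)-s_{k}(T)$. Multiplying by $n^{\gamma}$ and using $n+k-1\sim(1+\varepsilon)n$ gives
\[
\liminf_{n\to\infty} n^{\gamma}\lambda_{n}^{\pm}(S+T)\;\ge\;(1+\varepsilon)^{-\gamma}K^{\pm},
\]
and letting $\varepsilon\to0^{+}$ in this and in your (correct) upper bound $\limsup\le(1-\varepsilon)^{-\gamma}K^{\pm}$ yields the claim. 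With this correction your argument is complete.
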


\noindent For the next lemma, define $C^\infty_c(\mathbb{R}^2)$ to be the set of all infinitely differentiable, compactly supported functions $\varkappa:\mathbb{R}^2\to\mathbb{R}$.

\begin{lem}\label{lem3}
Let $\varkappa\in C^{\infty}_{c}(\mathbb{R}^2)$. Then the integral operator $\mathcal{K}:L^2(\mathbb{R})\to L^2(\mathbb{R})$, with
\begin{equation*}
	\big(\mathcal{K}f\big)(x)=\int_\mathbb{R}f(y)\varkappa(x,y)\dd{y}, \ \forall x\in\mathbb{R},
\end{equation*}
belongs to all Schatten classes $\mathbf{S}_p$, for $p>0$.
\end{lem}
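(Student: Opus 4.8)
The statement to prove is Lemma \ref{lem3}: an integral operator with a smooth compactly supported kernel $\varkappa\in C^\infty_c(\mathbb{R}^2)$ belongs to every Schatten class $\mathbf{S}_p$, $p>0$.

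The plan is to exploit smoothness to get rapid decay of the singular values. First I would recall that for an integral operator with kernel in $L^2(\mathbb{R}^2)$ one has the Hilbert--Schmidt bound $\|\mathcal{K}\|_{\mathbf{S}_2}^2 = \iint |\varkappa(x,y)|^2\,dx\,dy<+\infty$, so certainly $\mathcal{K}\in\mathbf{S}_2$; this already handles all $p\geq 2$ since $\mathbf{S}_2\subset\mathbf{S}_p$ for $p\geq 2$. The real work is getting into $\mathbf{S}_p$ for small $p$, and for that I would use the standard trick that differentiating the kernel only improves the Schatten membership. Concretely, let $R$ be a large interval containing the (compact) supports in both variables, and work on $L^2(R)$. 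Since $\varkappa$ is smooth and compactly supported, all partial derivatives $\partial_x^j\varkappa$ are again in $L^2(R^2)$, so the operators $\mathcal{K}_j$ with kernel $\partial_x^j\varkappa(x,y)$ are Hilbert--Schmidt for every $j\in\mathbb{N}_0$.

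The key step is then a singular-value comparison. On $L^2(R)$ expand in the Fourier basis $e_k(x)=|R|^{-1/2}e^{2\pi i k x/|R|}$, $k\in\mathbb{Z}$. Let $D$ denote differentiation on $L^2(R)$ with these periodic boundary conditions; then $(1+|D|^2)^{N/2}$ acts diagonally with eigenvalues $\sim |k|^N$. Because $\varkappa$ vanishes near the boundary of $R$, integration by parts gives that the kernel of $\mathcal{K}\,(1+|D|^2)^{N/2}$ is, up to lower-order terms, $\partial_y^N\varkappa(x,y)$ (with appropriate sign), hence $\mathcal{K}(1+|D|^2)^{N/2}$ is bounded (indeed Hilbert--Schmidt). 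Equivalently $\mathcal{K} = \big(\mathcal{K}(1+|D|^2)^{N/2}\big)(1+|D|^2)^{-N/2}$, a product of a bounded operator with $(1+|D|^2)^{-N/2}$. The operator $(1+|D|^2)^{-N/2}$ on $L^2(R)$ has singular values $\sim |k|^{-N}$, so it lies in $\mathbf{S}_p$ as soon as $Np>1$, i.e. $N>1/p$. By the ideal property $\|AB\|_{\mathbf{S}_p}\leq \|A\|\,\|B\|_{\mathbf{S}_p}$, we conclude $\mathcal{K}\in\mathbf{S}_p$ for every $p>0$ by choosing $N=N(p)$ large enough. An alternative, essentially equivalent route is to estimate the Fourier coefficients $\widehat{\varkappa}(k,\ell)$ of $\varkappa$ on $R\times R$: smoothness and periodicity give $|\widehat\varkappa(k,\ell)|\lesssim_N \langle k\rangle^{-N}\langle\ell\rangle^{-N}$ for every $N$, and then one bounds the Schatten norm of $\mathcal{K}$ by that of the operator whose matrix in the Fourier basis is $\widehat\varkappa(k,\ell)$; a crude bound $s_n(\mathcal{K})\lesssim_N n^{-N}$ follows from, say, splitting into finite-rank pieces supported on dyadic frequency blocks.

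The main obstacle is purely bookkeeping: one must make sure the integration by parts in $y$ produces no boundary contributions, which is exactly why we enlarged the support to a box $R\times R$ with $\varkappa$ and all its derivatives vanishing near $\partial R$, so that $D$ can be taken with periodic boundary conditions and $(1+|D|^2)^{-N/2}$ has the clean diagonal form above. Everything else — Hilbert--Schmidt estimates, the Schatten ideal inequality, and the bound $\mathbf{S}_2\subset\mathbf{S}_p$ for $p\ge 2$ — is standard and has already been invoked implicitly in the paper. No deep input is needed; the content is just that $C^\infty_c$ kernels are ``infinitely smoothing'' and therefore trace-class to every power.
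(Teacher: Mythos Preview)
Your argument is correct and is precisely the standard proof: reduce to a box containing the support, expand in a Fourier basis (equivalently, factor through $(1+|D|^2)^{-N/2}$), and use the rapid decay of the Fourier coefficients of a $C^\infty_c$ kernel together with the Schatten ideal property. The paper itself does not give a proof but simply remarks that the result is a minor modification of \cite[Chapter~30.5, Theorem~13]{lax2014functional}, whose argument is essentially the one you wrote out.
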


\begin{remark}
	Lemma \ref{lem3} is a minor modification of \cite[Chapter 30.5, Theorem 13]{lax2014functional}. More precisely, Theorem~13 proves the $\mathbf{S}_1$ inclusion of $\mathcal{K}$, but the proof for the rest of $\mathbf{S}_p$ is obtained by the same argument.
\end{remark}

\subsection{Asymptotic orthogonality in $\mathbf{S}_{p,\infty}$}

Let $A$ and $B$ be two operators that belong to the class $\mathbf{S}_{p,\infty}$, where $p>0$. Notice that $B^*A$ and $BA^*$ belong to $\mathbf{S}_{\frac{p}{2},\infty}$. We will call $A$ and $B$ orthogonal if $B^*A=BA^*=0$, and asymptotically orthogonal if $B^*A$ and $BA^*$ belong to $\mathbf{S}_{\frac{p}{2},\infty}^0$.

Asymptotic orthogonality plays an important role when we want to obtain the spectral asymptotics of the operator $A+B$, while we know those of $A$ and $B$. More precisely, for compact, self-adjoint operators, there is the following Lemma, which is a special case of \cite[Theorem 2.3]{pushnitski2016spectral}.

\begin{lem}\label{lem1}
Let $A$ and $B$ be two self-adjoint operators in $\mathbf{S}_{p,\infty}$, for some $p>0$. Assume that the asymptotics of their positive and negative eigenvalues, $\lambda^{\pm}_n(A)$ and $\lambda_n^\pm(B)$, are given by
\begin{equation*}
	\lambda_n^\pm(A)=C^\pm_An^{-\frac{1}{p}}+o(n^{-\frac{1}{p}}), \ n\to+\infty;
\end{equation*}
and
\begin{equation*}
	\lambda_n^\pm(B)=C^\pm_Bn^{-\frac{1}{p}}+o(n^{-\frac{1}{p}}), \ n\to+\infty.
\end{equation*}
If $A$ and $B$ are asymptotically orthogonal, then
\begin{equation*}
	\lambda_n^\pm(A+B)=\big((C_A^\pm)^p+(C_B^\pm)^p\big)^{\frac{1}{p}}n^{-\frac{1}{p}}+o(n^{-\frac{1}{p}}), \ n\to+\infty.
\end{equation*}
\end{lem}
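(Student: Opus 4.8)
The plan is to deduce Lemma~\ref{lem1} from \cite[Theorem 2.3]{pushnitski2016spectral} after recasting the statement in terms of eigenvalue counting functions, in which form the general result is phrased. For a compact self-adjoint operator $T$ write $n_\pm(\lambda;T)=\#\{n\in\mathbb{N}:\lambda_n^\pm(T)>\lambda\}$ for $\lambda>0$. An elementary computation, inverting the ordering relation between an eigenvalue sequence and its counting function, shows that for any constant $C\geq0$,
\begin{equation*}
	\lambda_n^\pm(T)=Cn^{-1/p}+o(n^{-1/p}) \ (n\to+\infty) \quad\Longleftrightarrow\quad n_\pm(\lambda;T)=C^p\lambda^{-p}+o(\lambda^{-p}) \ (\lambda\to0^+).
\end{equation*}
Hence the hypotheses on $A$ and $B$ become $n_\pm(\lambda;A)=(C_A^\pm)^p\lambda^{-p}+o(\lambda^{-p})$ and $n_\pm(\lambda;B)=(C_B^\pm)^p\lambda^{-p}+o(\lambda^{-p})$ as $\lambda\to0^+$, while the desired conclusion is equivalent to the asymptotic additivity $n_\pm(\lambda;A+B)=\big((C_A^\pm)^p+(C_B^\pm)^p\big)\lambda^{-p}+o(\lambda^{-p})$.

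Next I would check that the orthogonality hypothesis matches the one required by the cited theorem. Since $A$ and $B$ are self-adjoint, $B^*A=BA$ and $BA^*=BA$, so the assumption that $A$ and $B$ are asymptotically orthogonal reduces to $BA\in\mathbf{S}_{p/2,\infty}^0$, equivalently (singular values being unchanged under adjunction) $AB\in\mathbf{S}_{p/2,\infty}^0$; this is exactly the orthogonality hypothesis under which \cite[Theorem 2.3]{pushnitski2016spectral} yields the asymptotic additivity of the counting functions for the pair $A,B$. Applying that theorem to $A,B\in\mathbf{S}_{p,\infty}$, with the counting-function asymptotics recorded above, therefore gives $n_\pm(\lambda;A+B)=n_\pm(\lambda;A)+n_\pm(\lambda;B)+o(\lambda^{-p})$ as $\lambda\to0^+$. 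Translating back through the equivalence of the previous paragraph yields $\lambda_n^\pm(A+B)=\big((C_A^\pm)^p+(C_B^\pm)^p\big)^{1/p}n^{-1/p}+o(n^{-1/p})$, which is the claim (and in particular shows $A+B\in\mathbf{S}_{p,\infty}$).

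The real work is contained in the cited Theorem~2.3, and that is where I expect the difficulty to lie were one to establish the statement from scratch. The crude Weyl inequality $n_\pm(\lambda_1+\lambda_2;A+B)\leq n_\pm(\lambda_1;A)+n_\pm(\lambda_2;B)$, optimised over the splitting $\lambda=\lambda_1+\lambda_2$, only delivers the upper bound $\big((C_A^\pm)^{p/(p+1)}+(C_B^\pm)^{p/(p+1)}\big)^{p+1}\lambda^{-p}$, which strictly overshoots the sharp constant $(C_A^\pm)^p+(C_B^\pm)^p$ as soon as both $C_A^\pm,C_B^\pm>0$; the analogous variational lower bound built from the spectral subspaces of $A$ and $B$ above level $\lambda$ likewise falls short of the sharp constant. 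It is precisely the asymptotic orthogonality $AB\in\mathbf{S}_{p/2,\infty}^0$ that renders these spectral subspaces asymptotically orthogonal, eliminating the interference and restoring exact additivity of the counting functions. Since we take Theorem~2.3 as a black box, the proof reduces to the reformulation and hypothesis-matching of the first two paragraphs.
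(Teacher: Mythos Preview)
Your proposal is correct and follows the same route as the paper: the paper does not give an independent proof but simply states that the lemma is a special case of \cite[Theorem~2.3]{pushnitski2016spectral}, and your argument spells out the routine translation (eigenvalue asymptotics $\leftrightarrow$ counting-function asymptotics, and verification that for self-adjoint $A,B$ the asymptotic orthogonality hypothesis reduces to $AB\in\mathbf{S}_{p/2,\infty}^0$) needed to invoke that theorem. Your additional remarks on why the crude Weyl inequalities fail to give the sharp constant are accurate but not needed for the deduction.
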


\subsection{Weighted Hankel operators}\label{WHO}

Let $\{w_1(j)\}_{j\in\mathbb{N}_0}$, $\{w_2(j)\}_{j\in\mathbb{N}_0}$ and $a=\{a(j)\}_{j\in\mathbb{N}_0}$ be three complex valued sequences and define, formally, the operator $\Gamma:\ell^2(\mathbb{N}_0)\to\ell^2(\mathbb{N}_0)$ by $\Gamma=\mathcal{M}_{w_1}H_a\mathcal{M}_{w_2}$, where $\mathcal{M}_{w}$ is the multiplication operator by a sequence $w=\{w(j)\}_{j\in\mathbb{N}_0}$. In addition, for any $\alpha$, $\beta>0$, define the special class of weighted Hankel operators $\Gamma_a^{\alpha,\beta}=\mathcal{M}_{w_1}H_a\mathcal{M}_{w_2}$, where $w_1(j)=(j+1)^{\alpha}$ and $w_2(j)=(j+1)^{\beta}$, for all $j\in\mathbb{N}_0$. A Schatten class criterion for this class of weighted operators is given by the Theorem \ref{thm2_8} \cite[Theorem B]{aleksandrov2004distorted}.

\begin{thm}\label{thm2_8}
	Let $p\in(0,+\infty)$, $\alpha$, $\beta>0$, and $\phi$ be an analytic function on $\mathbb{T}$ and $\Phi\phi$ the sequence of its Fourier coefficients. Then
	\begin{equation*}
		\|\phi\|_{B_{p}^{\frac{1}{p}+\alpha+\beta}} \lesssim \|\Gamma_{\Phi\phi}^{\alpha,\beta}\|_{\mathbf{S}_p}\lesssim \|\phi\|_{B_{p}^{\frac{1}{p}+\alpha+\beta}},
	\end{equation*}
	where $\Gamma_{\Phi\phi}^{\alpha,\beta}$ is the weighted Hankel operator described by the matrix $[(i+1)^{\alpha}(\Phi\phi)(i+j)(j+~1)^{\beta}]_{i,j\geq0}$.
\end{thm}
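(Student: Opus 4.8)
The plan is to follow the Littlewood--Paley scheme underlying Peller's $\mathbf{S}_p$-criterion for Hankel operators, adapted to the power weights $(i+1)^\alpha,(j+1)^\beta$, which shift the Besov smoothness index by $\alpha+\beta$. Using the functions $V_n$ of \S\ref{BC}, write $\phi=\sum_{n\ge0}\phi_n$ with $\phi_n=\phi*V_n$; since $\phi$ is analytic, each $\phi_n$ is an analytic polynomial with spectrum in $[2^{n-1},2^{n+1}]$, so $\Gamma^{\alpha,\beta}_{\Phi\phi_n}$ has matrix supported on the finite block $0\le i,j<N_n$ with $N_n\asymp 2^n$, and $\Gamma^{\alpha,\beta}_{\Phi\phi}=\sum_{n}\Gamma^{\alpha,\beta}_{\Phi\phi_n}$. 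By the very definition of the diagonal Besov norm, $\|\phi\|_{B_p^{1/p+\alpha+\beta}}^p=\sum_{n}\big(2^{n(1/p+\alpha+\beta)}\|\phi_n\|_{L^p(\mathbb{T})}\big)^p$, so the theorem reduces to the single-block equivalence
\begin{equation*}
	\big\|\Gamma^{\alpha,\beta}_{\Phi\phi_n}\big\|_{\mathbf{S}_p}\asymp 2^{n(1/p+\alpha+\beta)}\,\|\phi_n\|_{L^p(\mathbb{T})}
\end{equation*}
together with a reassembly step, $\big\|\sum_n\Gamma^{\alpha,\beta}_{\Phi\phi_n}\big\|_{\mathbf{S}_p}\asymp\big(\sum_n\|\Gamma^{\alpha,\beta}_{\Phi\phi_n}\|_{\mathbf{S}_p}^p\big)^{1/p}$, which rests on the (asymptotic) orthogonality in $\mathbf{S}_p$ of the pieces coming from widely separated dyadic blocks, in the spirit of Lemma~\ref{lem1}.

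For a single block, put $N=N_n$, $\omega=e^{2\pi i/N}$. As $\deg\phi_n<N$, the quadrature identity $(\Phi\phi_n)(m)=N^{-1}\sum_{l=0}^{N-1}\phi_n(\omega^l)\omega^{-lm}$ gives, on the block,
\begin{equation*}
	\big[(i+1)^\alpha(\Phi\phi_n)(i+j)(j+1)^\beta\big]_{i,j<N}=\frac1N\sum_{l=0}^{N-1}\phi_n(\omega^l)\,v_l\otimes w_l,\qquad v_l=\big((i+1)^\alpha\omega^{-li}\big)_{i<N},\ \ w_l=\big((j+1)^\beta\omega^{-lj}\big)_{j<N},
\end{equation*}
a sum of $N$ rank-one operators with $\|v_l\otimes w_l\|_{\mathbf{S}_q}=\|v_l\|\,\|w_l\|\asymp N^{\alpha+\beta+1}$ for every $q>0$. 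When $0<p\le 1$, the $p$-triangle inequality together with the Marcinkiewicz--Zygmund estimate $\sum_{l<N}|\phi_n(\omega^l)|^p\asymp N\|\phi_n\|_{L^p}^p$ yields the upper bound $\|\Gamma^{\alpha,\beta}_{\Phi\phi_n}\|_{\mathbf{S}_p}\lesssim N^{\alpha+\beta+1/p}\|\phi_n\|_{L^p}$ directly. When $p\ge 1$, one interpolates the linear map $\phi_n\mapsto\Gamma^{\alpha,\beta}_{\Phi\phi_n}$ between $\mathbf{S}_1$, where the triangle inequality applied to the expansion above gives $\lesssim N^{\alpha+\beta}\sum_{l<N}|\phi_n(\omega^l)|\asymp N^{1+\alpha+\beta}\|\phi_n\|_{L^1}$, and $\mathbf{S}_\infty$, where factoring out the diagonal weights (each $\le N^\alpha$, resp.\ $N^\beta$, on the block) and Nehari's bound $\|H_{\Phi\phi_n}\|\le\|\phi_n\|_{L^\infty}$ give $\lesssim N^{\alpha+\beta}\|\phi_n\|_{L^\infty}$; complex interpolation then produces $\lesssim N^{\alpha+\beta+1/p}\|\phi_n\|_{L^p}$.

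For the single-block lower bound I would compress $\Gamma^{\alpha,\beta}_{\Phi\phi_n}$ to finitely many dyadic sub-bands $i,j\in[cN,N)$, on which the weights are uniformly $\asymp N^\alpha$, resp.\ $N^\beta$; the pinching bound $s_j(PTP)\le s_j(T)$, valid for all $p>0$, shows such compressions cost nothing in $\mathbf{S}_p$-norm, and, the bands being disjoint, their contributions add in $\ell^p$. On each band the estimate reduces to the unweighted Hankel block $[(\Phi\phi_n)(i+j)]$; choosing the bands so that the sums $i+j$ sweep the whole spectrum $[2^{n-1},2^{n+1}]$, one recovers $\|\phi_n\|_{L^p}$: for $p\ge 1$ by pairing each compression against the flip operators $F_m=\sum_i e_{m-i}\otimes e_i$, which lie in $\mathbf{S}_{p'}$ with norm $\asymp(m+1)^{1/p'}$ and satisfy $\mathrm{tr}(H_{\Phi\phi_n}F_m)=(m+1)(\Phi\phi_n)(m)$, and then combining such pairings into the $L^{p'}$-dual pairing with $\phi_n$; for $0<p<1$ by checking directly that the quadrature expansion nearly diagonalises each band compression, up to a controlled error. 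Summing the bands gives $\|\Gamma^{\alpha,\beta}_{\Phi\phi_n}\|_{\mathbf{S}_p}\gtrsim N^{\alpha+\beta+1/p}\|\phi_n\|_{L^p}$, and summing over $n$ via block near-orthogonality closes both inequalities of the theorem.

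The main obstacle is the passage from the unweighted monomial picture to the weighted one. Without weights the vectors $v_l$ are pairwise orthogonal, so a single block is exactly a scalar multiple of a direct sum of rank-one projections and every estimate above is an identity up to constants; the weights destroy $\langle v_l,v_{l'}\rangle=0$ and force $w_l\neq v_l$, so both the single-block lower bound and the summation over dyadic blocks must quantitatively control genuinely nonvanishing off-diagonal interactions ($l\neq l'$, resp.\ $n\neq n'$). Estimating this near-cancellation in $\mathbf{S}_p$ is the technical heart of \cite[Theorem B]{aleksandrov2004distorted}, and the dyadic Besov framework of \S\ref{BC} is precisely the bookkeeping device that makes it manageable.
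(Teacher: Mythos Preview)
The paper does not prove Theorem~\ref{thm2_8}; it is quoted verbatim as \cite[Theorem~B]{aleksandrov2004distorted} and used as a black box, so there is no in-paper argument to compare your proposal against.

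As a standalone sketch your outline follows the correct Peller--Aleksandrov strategy (dyadic decomposition, single-block $\mathbf{S}_p$--$L^p$ equivalence via quadrature/rank-one expansion, reassembly), and you explicitly acknowledge that the hard part is exactly the content of \cite{aleksandrov2004distorted}. Two remarks nonetheless. First, invoking Lemma~\ref{lem1} for the reassembly is a red herring: that lemma concerns leading-order eigenvalue asymptotics under asymptotic orthogonality, not two-sided $\mathbf{S}_p$-norm control of a sum. The actual mechanism is much simpler --- since $\phi_n$ has spectrum in $[2^{n-1},2^{n+1}]$, the matrices $\Gamma^{\alpha,\beta}_{\Phi\phi_n}$ and $\Gamma^{\alpha,\beta}_{\Phi\phi_m}$ live on disjoint bands of anti-diagonals whenever $|n-m|\ge 2$, so after splitting into even and odd $n$ one has genuine orthogonality ($A^*B=AB^*=0$), which gives $\|\sum_n\|_{\mathbf{S}_p}^p\asymp\sum_n\|\cdot\|_{\mathbf{S}_p}^p$ immediately for all $p>0$. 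Second, your single-block lower bound is the step that remains genuinely incomplete: the ``bands'' argument you describe needs the full Aleksandrov--Peller machinery to handle the non-orthogonality of the weighted vectors $v_l,w_l$, and the $p<1$ case in particular does not follow from the quadrature expansion ``nearly diagonalising'' anything without substantial further work.
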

\noindent The following lemma is a combination of Theorem \ref{thm2_8} and \cite[Theorem 6.4.4]{peller2012hankel}. The reader can find a sketch of proof in the Appendix \ref{A}.

\begin{lem}\label{lem0}
	Define the measure space
	\begin{equation*}
		(\mathcal{M},\mu):=\bigoplus_{n\in\mathbb{N}_0}\left( \mathbb{T}, 2^{n}\mathbf{m} \right),
	\end{equation*}
	where $\mathbf{m}$ is the Lebesgue measure on $\mathbb{T}$. Let $p\in(0,+\infty)$, $q\in(0,+\infty]$ and $\mathcal{B}_{p,q}^{\frac{1}{p}+d-1}$ be the space of analytic functions $\phi$ on $\mathbb{D}=\{z\in\mathbb{C}: \ |z|<1\}$ such that
	\begin{equation}\label{4}
		\bigoplus_{n\in\mathbb{N}_0}2^{n(d-1)}\phi*V_n\in L^{p,q}(\mathcal{M},\mu),
	\end{equation}
	where the polynomials $V_n$ are defined in (\ref{1'}) and (\ref{2}). Then
	\begin{equation*}
		\left\|\Gamma_{\Phi\phi}^{\frac{d-1}{2},\frac{d-1}{2}}\right\|_{\mathbf{S}_{p,q}}\lesssim\left\| \phi \right\|_{\mathcal{B}_{p,q}^{\frac{1}{p}+d-1}},
	\end{equation*}
where $\Phi\phi$ is the sequence of the Fourier coefficients of $\phi$.
\end{lem}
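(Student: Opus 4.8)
The plan is to reduce the Schatten--Lorentz bound for the weighted Hankel operator $\Gamma^{(d-1)/2,(d-1)/2}_{\Phi\phi}$ to the ordinary Schatten bounds supplied by Theorem \ref{thm2_8}, combined with a Littlewood--Paley decomposition and a real-interpolation (quasi-normed orthogonal-sum) argument of the kind found in \cite[Theorem 6.4.4]{peller2012hankel}. First I would fix the dyadic pieces $\phi_n := \phi*V_n$ and note that $\phi = \sum_{n} \phi_n$ in the appropriate sense; the key point, already packaged in Theorem \ref{thm2_8}, is that for a polynomial supported on frequencies of size $\sim 2^n$ one has the two-sided estimate $\|\Gamma^{\alpha,\beta}_{\Phi\phi_n}\|_{\mathbf{S}_p} \asymp \|\phi_n\|_{B^{1/p+\alpha+\beta}_p} \asymp 2^{n(1/p+\alpha+\beta)}\|\phi_n\|_p$, since the Besov norm of a single Littlewood--Paley block is just a weighted $L^p$ norm of that block. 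With $\alpha=\beta=(d-1)/2$ the weight exponent is $1/p + d - 1$, which matches the smoothness index in the statement.

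The second step is to assemble the blocks. Writing $\Gamma := \Gamma^{(d-1)/2,(d-1)/2}_{\Phi\phi}$ and $\Gamma_n$ for the block operators, I would argue that the $\Gamma_n$ are, up to harmless overlaps of neighbouring scales, ``asymptotically orthogonal'' in the strong sense that controlling $\|\Gamma\|_{\mathbf{S}_{p,q}}$ reduces to controlling the sequence $\{\|\Gamma_n\|_{\mathbf{S}_p}\}_{n}$ in a Lorentz-type sequence space. Concretely, the operator $\Gamma$ decomposes along the Littlewood--Paley scales into pieces whose singular-value sequences, once interleaved, produce an operator in $\mathbf{S}_{p,q}$ precisely when $\big\{ 2^{n(1/p+d-1)}\|\phi_n\|_p \big\}_n$, reinterpreted as a function on the measure space $(\mathcal{M},\mu) = \bigoplus_n(\mathbb{T}, 2^n\mathbf{m})$, lies in $L^{p,q}(\mathcal{M},\mu)$ --- this is exactly condition (\ref{4}), because $\|2^{n(d-1)}\phi_n\|^p_{L^p(\mathbb{T},2^n\mathbf{m})} = 2^{n}2^{np(d-1)}\|\phi_n\|_p^p = \big(2^{n(1/p+d-1)}\|\phi_n\|_p\big)^p$. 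Thus the definition of $\mathcal{B}^{1/p+d-1}_{p,q}$ is tailored so that $\|\phi\|_{\mathcal{B}^{1/p+d-1}_{p,q}}$ is, by construction, the $L^{p,q}(\mathcal{M},\mu)$-norm of the block data, and the desired inequality becomes the statement that the $\mathbf{S}_{p,q}$-(quasi)norm of a ``block-diagonal-up-to-overlap'' operator is dominated by the $\ell^{p,q}$-type norm of the blockwise $\mathbf{S}_p$-norms.

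For that last structural fact I would invoke the standard machinery behind \cite[Theorem 6.4.4]{peller2012hankel}: interpolate the two endpoint statements ``$\mathbf{S}_p$ blockwise summability controls $\mathbf{S}_p$'' (the case $q=p$, which is just the triangle-type inequality for $\mathbf{S}_p$ quasi-norms across nearly-orthogonal summands) and the weak-type endpoint, using the real interpolation identity $(\mathbf{S}_{p_0},\mathbf{S}_{p_1})_{\theta,q} = \mathbf{S}_{p,q}$ together with the analogous identity for the $L^{p,q}(\mathcal{M},\mu)$ scale; the exponent $q\in(0,\infty]$ is then free. Care is needed with the quasi-Banach range $p<1$, where one replaces triangle inequalities by their $p$-convex analogues, and with the bookkeeping of the finite overlap of the supports of $V_{n-1},V_n,V_{n+1}$, which only costs an absolute constant. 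I expect the main obstacle to be precisely this interpolation/orthogonality step in the quasi-normed regime: making rigorous that the $\mathbf{S}_{p,q}$ quasi-norm of the full operator is controlled by the mixed-norm of its Littlewood--Paley blocks, since $\mathbf{S}_{p,q}$ for small $p$ is only a quasi-normed ideal and the naive ``disjointification'' of singular values across scales requires the Ky Fan-type rearrangement inequalities rather than a clean orthogonal direct sum. Everything else --- the per-block equivalence and the matching of exponents --- is bookkeeping, which is why I would relegate the detailed verification to the Appendix as the paper indicates.
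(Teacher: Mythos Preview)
Your high-level instinct---use Theorem~\ref{thm2_8} as the diagonal ($q=p$) endpoint and then real-interpolate to reach $\mathbf{S}_{p,q}$---matches the paper. But the mechanism you propose for the interpolation step is not the one that actually works, and the obstacle you flag is not the real one.

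You frame the assembly of the dyadic pieces $\Gamma_n$ as an ``asymptotic orthogonality'' statement, to be justified by Ky~Fan--type rearrangement of singular values across nearly disjoint blocks. The operators $\Gamma_n$ are \emph{not} block-diagonal in any useful sense: the Hankel block with Fourier support in $[2^{n-1},2^{n+1}]$ has range contained in sequences supported on $[0,2^{n+1}]$, so the ranges are nested, not disjoint, and there is no clean disjointification of singular values. This route does not close.

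What the paper actually does is interpolate the \emph{linear map} $\phi\mapsto\Gamma^{\frac{d-1}{2},\frac{d-1}{2}}_{\Phi\phi}$ itself. Theorem~\ref{thm2_8} gives boundedness $B_{p}^{1/p+d-1}\to\mathbf{S}_p$ for every $p$; real interpolation then yields boundedness from $\bigl(B_{p_0}^{1/p_0+d-1},B_{p_1}^{1/p_1+d-1}\bigr)_{\theta,q}$ to $\mathbf{S}_{p,q}$. The entire content of the lemma is therefore the identification
\[
\bigl(B_{p_0}^{1/p_0+d-1},B_{p_1}^{1/p_1+d-1}\bigr)_{\theta,q}=\mathcal{B}_{p,q}^{1/p+d-1},
\]
and this is where the work lies. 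The paper proves it by a retract argument: one builds $\mathcal{J}:\phi\mapsto\bigoplus_n 2^{n(d-1)}\phi*V_n$ (an isometry into $L^p(\mathcal{M},\mu)$) and a left inverse $\mathcal{K}:\bigoplus_n\psi_n\mapsto\sum_n 2^{-n(d-1)}\psi_n*\tilde V_n$, and then invokes $(L^{p_0},L^{p_1})_{\theta,q}=L^{p,q}$. The nontrivial point---and the true obstacle in the quasi-Banach range---is the boundedness of $\mathcal{K}$, which reduces to showing that the $V_n$ are \emph{uniformly bounded Fourier multipliers} on $L^p(\mathbb{T})$ for $p>1$ and on $H^p(\mathbb{T})$ for $0<p\le 1$. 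This requires Mikhlin/H\"ormander--type multiplier theorems and their transference to the torus (Theorems~\ref{thm_4}--\ref{thm_5} in the appendix), none of which appears in your outline. Your diagnosis that the difficulty is ``Ky~Fan rearrangement across scales'' should be replaced by ``uniform multiplier bounds for the Littlewood--Paley cut-offs in the quasi-Banach range''.
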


\subsection{Laplace transform estimates}

Let $\mathcal{L}:L^2(\mathbb{R}_+)\to L^2(\mathbb{R}_+)$ be the Laplace transform, given by
\begin{equation*}
	\big(\mathcal{L}f\big)(t)=\int_0^{+\infty}f(\lambda)e^{-\lambda t}\dd{\lambda}, \ \forall t>0, \ \forall f\in L^2(\mathbb{R}_+).
\end{equation*}
In this paper, we are interested in the $t\to+\infty$ asymptotic behaviour of the Laplace transform of functions with logarithmic singularities near zero of the form $f(\lambda)=\lambda^n\left|\log\lambda\right|^{-\gamma}$, for $\gamma>0$. This asymptotic behaviour is obtained by Lemma \ref{lem2_3} \cite[Lemma 3.3]{pushnitski2015asymptotic}.

\begin{lem}\label{lem2_3}
	Let
	$$I_n(t)=\int_{0}^{\lambda_0}\lambda^n|\log\lambda|^{-\gamma} e^{-\lambda t}\dd{\lambda},$$
	where $\gamma>0$, $n\in\mathbb{N}_{0}$ and $\lambda_0\in(0,1)$. Then
	$$I_n(t)=n! \ t^{-1-n}|\log t|^{-\gamma}\left( 1+O(|\log t|^{-1}) \right), \ t\to+\infty.$$
\end{lem}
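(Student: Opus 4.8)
The statement is a refinement of Watson's lemma, and the plan is to rescale to the natural scale $\lambda\sim t^{-1}$ and then control the slowly varying logarithmic factor. First I would substitute $\lambda=s/t$, which gives
\[
	I_n(t)=t^{-n-1}\int_0^{\lambda_0 t} s^n e^{-s}\,|\log s-\log t|^{-\gamma}\dd{s}.
\]
Since $\lambda_0<1$, we have $s\le\lambda_0 t<t$ on the whole range, so $|\log s-\log t|=\log t-\log s=(\log t)\bigl(1-\tfrac{\log s}{\log t}\bigr)$, with the bracket bounded below by $|\log\lambda_0|/\log t>0$ for $t>1$. Factoring out $(\log t)^{-\gamma}$, it then suffices to show
\[
	J(t):=\int_0^{\lambda_0 t} s^n e^{-s}\Bigl(1-\tfrac{\log s}{\log t}\Bigr)^{-\gamma}\dd{s}=n!+O\bigl((\log t)^{-1}\bigr),\qquad t\to+\infty,
\]
because $n!=\int_0^{\infty}s^n e^{-s}\dd{s}$; substituting back then yields $I_n(t)=n!\,t^{-n-1}(\log t)^{-\gamma}\bigl(1+O((\log t)^{-1})\bigr)$, which is the asserted asymptotics once $\log t$ is replaced by $|\log t|$ (legitimate for large $t$).

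To estimate $J(t)-n!$ I would write it as $\int_0^{\lambda_0 t} s^n e^{-s}\bigl[(1-\tfrac{\log s}{\log t})^{-\gamma}-1\bigr]\dd{s}-\int_{\lambda_0 t}^{\infty}s^n e^{-s}\dd{s}$, the last integral being $O\bigl((\lambda_0 t)^n e^{-\lambda_0 t}\bigr)$ and hence negligible. For the first integral I would split the range at $s=t^{-1/2}$ and $s=t^{1/2}$. On $(0,t^{-1/2})$ the factor $1-\tfrac{\log s}{\log t}\ge\tfrac32$, so the bracket is bounded by $1$ in absolute value and the contribution is at most $\int_0^{t^{-1/2}}s^n\dd{s}=O(t^{-(n+1)/2})$. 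On $(t^{1/2},\lambda_0 t)$ one has $e^{-s}\le e^{-\sqrt t}$, while $(1-\tfrac{\log s}{\log t})^{-\gamma}\le(|\log\lambda_0|/\log t)^{-\gamma}$ grows only polynomially in $\log t$, so this piece is $O\bigl(t^{n+1}(\log t)^{\gamma}e^{-\sqrt t}\bigr)$. On the middle region $[t^{-1/2},t^{1/2}]$ one has $\bigl|\tfrac{\log s}{\log t}\bigr|\le\tfrac12$, and since $u\mapsto(1-u)^{-\gamma}$ is $C^1$ with derivative bounded on $[-\tfrac12,\tfrac12]$, the mean value theorem gives $\bigl|(1-\tfrac{\log s}{\log t})^{-\gamma}-1\bigr|\le C_\gamma\tfrac{|\log s|}{\log t}$; hence this piece is bounded by $\tfrac{C_\gamma}{\log t}\int_0^{\infty}s^n e^{-s}|\log s|\dd{s}=O((\log t)^{-1})$, the last integral being finite because $n\ge 0$. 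Adding the three contributions gives $J(t)-n!=O((\log t)^{-1})$, which finishes the argument.

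The only delicate point — and the step I expect to be the main obstacle — is that the naive linearisation $(1-\tfrac{\log s}{\log t})^{-\gamma}=1+O\bigl(\tfrac{|\log s|}{\log t}\bigr)$ is valid only away from the two ends of the integration interval: it breaks down near $s=0$, where $\log s\to-\infty$, and near $s=\lambda_0 t$, where $1-\tfrac{\log s}{\log t}$ becomes small. Choosing the cut-offs at the scales $t^{\pm 1/2}$ isolates the region where the linearisation is legitimate and where essentially all the mass of $s^n e^{-s}$ is concentrated; on the two discarded tails a crude bound that is polynomial in $t$ or in $\log t$ is already killed, respectively, by the vanishing of $s^n$ near $0$ and by the exponential decay of $e^{-s}$ near $\lambda_0 t$, so both are far smaller than the target error $(\log t)^{-1}$. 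Tuning these cut-off scales, and checking that they comfortably beat $(\log t)^{-1}$, is the one place where some care is needed; everything else is routine.
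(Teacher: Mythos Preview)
Your argument is correct. The substitution $\lambda=s/t$, the factoring of $(\log t)^{-\gamma}$, and the three-region splitting at $s=t^{\pm1/2}$ all work exactly as you describe; each of the three discarded pieces is indeed $o((\log t)^{-1})$, and on the middle region the mean-value bound $|(1-u)^{-\gamma}-1|\le C_\gamma|u|$ for $|u|\le\tfrac12$ gives the $O((\log t)^{-1})$ term that dominates. The finiteness of $\int_0^\infty s^n e^{-s}|\log s|\dd{s}$ is clear for all $n\ge0$, so there is no hidden subtlety there.

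As for comparison: the paper does not give its own proof of this lemma at all --- it simply quotes the statement as \cite[Lemma~3.3]{pushnitski2015asymptotic} and uses it as a black box. Your self-contained Watson-lemma style argument therefore supplies what the paper omits, and is entirely in the spirit of how such Laplace-integral asymptotics are usually established.
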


\subsection{Weyl-type spectral asymptotics for pseudo-differential operators}

Let $X$ and $D$ be, respectively, the multiplication and the differentiation operator in $L^2(\mathbb{R})$. They are self-adjoint operators, defined on appropriate domains, and given by
\begin{equation*}
	\big(Xf\big)(x)=xf(x), \ \text{and } \big(Df\big)(x)=-if'(x).
\end{equation*}
The following lemma (\cite[Theorem 2.4]{pushnitski2015asymptotic}) deals with pseudo-differential operators of the form $\Psi=\beta(X)\alpha(D)\beta(X)$. Notice that $\alpha(D)=\mathcal{F}^*\alpha(2\pi X)\mathcal{F}$, an expression which will prove to be useful in the sequel.

\begin{lem}\label{lem2_2}
	Let $\alpha$ be a real valued function in $C^{\infty}(\mathbb{R})$, such that
	\[ \alpha(x)=\left\{
	\begin{array}{rl}
		\alpha(+\infty)x^{-\gamma} + o(x^{-\gamma}), & x\to+\infty\\ \ \\
		\alpha(-\infty)|x|^{-\gamma} + o(x^{-\gamma}), & x\to-\infty,
	\end{array}
	\right. \]
	for some real constants $\alpha(+\infty)$, $\alpha(-\infty)$ and $\gamma>0$. Now let $\beta$ be a real valued function on $\mathbb{R}$ such that
	$$|\beta(x)|\leq M \left< x \right>^{-s}, \ \forall x\in\mathbb{R},$$
	where $s>\frac{\gamma}{2}$ and $M$ is a non-negative constant. Define the pseudo-differential operator $\Psi=\beta(X)\alpha(D)\beta(X)$ on $L^2(\mathbb{R})$. Then $\Psi$ is compact and obeys the following eigenvalue asymptotic formula:
	$$\lambda_{n}^{\pm}=C^{\pm}n^{-\gamma}+o(n^{-\gamma}), \ n\to+\infty,$$
	where
	\begin{equation*}
		C^{\pm}=\left[ \frac{1}{2\pi} \left( \alpha(+\infty)_{\pm}^{\frac{1}{\gamma}} + \alpha(-\infty)_{\pm}^{\frac{1}{\gamma}} \right) \int_{\mathbb{R}} |\beta(x)|^{\frac{2}{\gamma}}\dd{x} \right]^{\gamma}.
	\end{equation*}	
\end{lem}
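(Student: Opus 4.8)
The proof reduces, via a Weyl-type phase-space asymptotic formula, to an explicit volume computation. Writing $n_\pm(\lambda;\Psi):=\#\{n\in\mathbb{N}:\lambda_n^\pm(\Psi)>\lambda\}$, the assertion is equivalent to the formula
\begin{equation*}
	n_\pm(\lambda;\Psi)=\frac{1}{2\pi}\,\mathrm{meas}\big\{(x,\xi)\in\mathbb{R}^2:\pm\beta(x)^2\alpha(\xi)>\lambda\big\}\,\big(1+o(1)\big),\qquad\lambda\to0^+.
\end{equation*}
Granting this, one evaluates the right-hand side: the hypotheses on $\alpha$ give $\mathrm{meas}\{\xi:\pm\alpha(\xi)>\mu\}\sim\big(\alpha(+\infty)_\pm^{1/\gamma}+\alpha(-\infty)_\pm^{1/\gamma}\big)\mu^{-1/\gamma}$ as $\mu\to0^+$, so substituting $\mu=\lambda\beta(x)^{-2}$ and integrating over the bounded set $\{x:\beta(x)^2\gtrsim\lambda\}$ (with a dominated-convergence argument controlling the slowly decaying tail of $\beta$) yields
\begin{equation*}
	n_\pm(\lambda;\Psi)=\frac{1}{2\pi}\big(\alpha(+\infty)_\pm^{1/\gamma}+\alpha(-\infty)_\pm^{1/\gamma}\big)\Big(\int_{\mathbb{R}}|\beta(x)|^{2/\gamma}\dd{x}\Big)\lambda^{-1/\gamma}+o(\lambda^{-1/\gamma}),
\end{equation*}
which is $(C^\pm)^{1/\gamma}\lambda^{-1/\gamma}(1+o(1))$, equivalently $\lambda_n^\pm(\Psi)=C^\pm n^{-\gamma}+o(n^{-\gamma})$. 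The condition $s>\gamma/2$ is used precisely here: it makes $\int_{\mathbb{R}}|\beta(x)|^{2/\gamma}\dd{x}<+\infty$, and it forces the bounded-frequency contribution, of size $\mathrm{meas}\{x:\beta(x)^2\gtrsim\lambda\}=O(\lambda^{-1/(2s)})$, to be $o(\lambda^{-1/\gamma})$.

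To prove the Weyl asymptotics I would first pass to a model symbol. Fix $\zeta\in C^\infty(\mathbb{R})$ with $\zeta\equiv0$ on $[-1,1]$ and $\zeta\equiv1$ outside $[-2,2]$, and set
\begin{equation*}
	\alpha_0(\xi)=\zeta(\xi)|\xi|^{-\gamma}\big(\alpha(+\infty)\mathbf{1}_{\{\xi>0\}}+\alpha(-\infty)\mathbf{1}_{\{\xi<0\}}\big),\qquad\rho=\alpha-\alpha_0,
\end{equation*}
so that $\rho\in C^\infty(\mathbb{R})$ is bounded with $\rho(\xi)=o(|\xi|^{-\gamma})$ as $|\xi|\to\infty$. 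Then $\Psi=\Psi_0+R$ with $\Psi_0=\beta(X)\alpha_0(D)\beta(X)$ and $R=\beta(X)\rho(D)\beta(X)$, and the claim is that $R\in\mathbf{S}^0_{1/\gamma,\infty}$, so that by Lemma \ref{lem2_1} the operator $\Psi$ is compact with the same eigenvalue asymptotics as $\Psi_0$. For this, decompose $\rho$ smoothly into a part supported in $\{|\xi|\leq T\}$ and a tail: the first part produces an operator with Schwartz kernel $\beta(x)\,k_T(x-y)\,\beta(y)$, $k_T$ rapidly decreasing, hence in every $\mathbf{S}_p$ by the argument of Lemma \ref{lem3}; the tail obeys a Cwikel--Birman--Solomyak bound $\|\cdot\|_{\mathbf{S}_{1/\gamma,\infty}}\lesssim\big(\sup_{|\xi|\geq T}|\xi|^\gamma|\rho(\xi)|\big)\big(\int_{\mathbb{R}}|\beta|^{2/\gamma}\big)^{\gamma}$, which vanishes as $T\to\infty$; a limiting argument then puts $R$ in $\mathbf{S}^0_{1/\gamma,\infty}$.

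It remains to analyse the model $\Psi_0$. One may either invoke the Birman--Solomyak spectral asymptotics for weighted pseudodifferential operators $b(X)a(D)b(X)$ directly — whose conclusion is exactly the two-sided Weyl formula above — or argue in the spirit of this paper: split $\alpha_0=\mu_++\mu_-$ according to $\mathrm{sgn}\,\xi$; the operators $\beta(X)\mu_+(D)\beta(X)$ and $\beta(X)\mu_-(D)\beta(X)$ are asymptotically orthogonal in $\mathbf{S}_{1/\gamma,\infty}$ (their product involves $\mu_+(D)\beta(X)^2\mu_-(D)$, which is smoothing since $\mathcal{F}\mu_\pm$ live in disjoint frequency half-lines up to a smooth, compactly supported term, hence lies in every $\mathbf{S}_p$), so Lemma \ref{lem1} applies with $p=1/\gamma$. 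For a single half, say $\Psi_0^+=\beta(X)\mu_+(D)\beta(X)$ with $\mu_+(\xi)=\zeta(\xi)\alpha(+\infty)|\xi|^{-\gamma}\mathbf{1}_{\{\xi>0\}}$ and $\alpha(+\infty)\neq0$, write $\mu_+(D)=\mathrm{sgn}(\alpha(+\infty))\big(|\mu_+|^{1/2}(D)\big)^2$; by Lemma \ref{lem} the nonzero eigenvalues of $\mathrm{sgn}(\alpha(+\infty))\,\Psi_0^+$ coincide with those of $|\mu_+|^{1/2}(D)\,\beta(X)^2\,|\mu_+|^{1/2}(D)\geq0$, which after conjugation by $\mathcal{F}$ is the integral operator on $L^2(0,+\infty)$ with kernel $\sqrt{|\mu_+|(2\pi x)\,|\mu_+|(2\pi y)}\;\widehat{\beta^2}(x-y)$; its leading spectral behaviour (weight $\sim(xy)^{-\gamma/2}$) gives counting function $\tfrac{1}{2\pi}|\alpha(+\infty)|^{1/\gamma}\big(\int_{\mathbb{R}}|\beta|^{2/\gamma}\big)\lambda^{-1/\gamma}(1+o(1))$ on the side of the spectrum fixed by $\mathrm{sgn}\,\alpha(+\infty)$. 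Doing the same with $\mu_-$ and $\alpha(-\infty)$, and recombining by Lemma \ref{lem1}, reproduces the constants $C^\pm$; Lemma \ref{lem2_1} then transfers the asymptotics back to $\Psi$.

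The main obstacle is the sharp spectral asymptotics of the pure-power model $\beta(X)\mu_\pm(D)\beta(X)$: the symbol reduction, the $\mathbf{S}^0_{1/\gamma,\infty}$-remainder estimate, and the left/right recombination are routine once the perturbation results (Lemmas \ref{lem2_1}, \ref{lem1}) and the smoothing estimate of Lemma \ref{lem3} are in hand, but isolating the exact constant — in particular the factor $\tfrac{1}{2\pi}\int_{\mathbb{R}}|\beta|^{2/\gamma}$ — requires either the full Birman--Solomyak machinery for pseudodifferential operators with non-smooth symbols or a hands-on reduction, via the exponential change of variables already used for the model Hankel operator in this paper, to a convolution-type operator whose spectral asymptotics can be computed directly. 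A secondary technical point is that $p=1/\gamma$ ranges over all of $(0,+\infty)$, so the Cwikel-type estimate in the remainder step must be invoked in its appropriate form in each of the ranges $p\geq2$, $1\leq p<2$, and $0<p<1$.
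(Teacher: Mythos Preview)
The paper does not prove Lemma~\ref{lem2_2}; it is quoted verbatim as \cite[Theorem~2.4]{pushnitski2015asymptotic} in the Preliminaries, and is used as a black box in \S\ref{PDO}--\S\ref{WA}. There is therefore no ``paper's own proof'' to compare your proposal against.

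That said, your sketch is broadly in the right spirit for how such results are obtained --- reduce to a pure-power model symbol, control the remainder in $\mathbf{S}^0_{1/\gamma,\infty}$, separate the $\xi>0$ and $\xi<0$ contributions via asymptotic orthogonality, and extract the constant from a Weyl volume count --- and this is indeed the approach in \cite{pushnitski2015asymptotic}. However, as written it is a proof outline rather than a proof: you explicitly defer the crucial step (sharp asymptotics for $\beta(X)\mu_\pm(D)\beta(X)$ with the exact constant $\tfrac{1}{2\pi}\int|\beta|^{2/\gamma}$) to ``the full Birman--Solomyak machinery'' or an unspecified hands-on reduction, and the Cwikel-type bound you invoke for the remainder in the full range $p=1/\gamma\in(0,\infty)$ is asserted rather than supplied. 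If you intend to give a self-contained argument, those two items --- the model-operator asymptotics and the uniform $\mathbf{S}_{p,\infty}$ estimate for $\beta(X)\rho(D)\beta(X)$ across all $p$ --- are where the actual work lies; otherwise, citing \cite[Theorem~2.4]{pushnitski2015asymptotic} (as the paper does) is the honest route.
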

\noindent Above $\left<x\right>:=\sqrt{1+x^2}, \forall x\in\mathbb{R}$.

\section{Construction of the model operator}\label{MO}

Consider the cut-off function $\chi_0\in C^\infty(\mathbb{R})$ such that
\begin{equation}\label{eqn0'}
	\mathcal{\chi}_{0}(t)=\left\{
	\begin{array}{rl}
		1, & 0< t\leq\frac{1}{2}\\ \ \\
		0, & t\geq \frac{3}{4}
	\end{array}, \right.
\end{equation}
and $0\leq\mathcal{\chi}_{0}\leq1$. Let $\gamma>0$ and define the function
\begin{equation}\label{eq2_11'}
	w(t)=\frac{1}{(d-1)!}t^{d-1}\left|\log t\right|^{-\gamma}\chi_0(t), \ \forall t>0.
\end{equation}
If
\begin{equation*}
	\big(\mathcal{L}w\big)(t)=\int_{0}^{+\infty}w(\lambda)e^{-\lambda t}\dd{\lambda}, \ \forall t>0,
\end{equation*}
let $b_1,b_{-1}\in\mathbb{R}$ and define the sequence $\tilde{a}=\{\tilde{a}(j)\}_{j\in\mathbb{N}}$ by
\begin{equation}\label{eq2_4'}
	\tilde{a}(j)=b_1\big(\mathcal{L}w\big)(j)+(-1)^jb_{-1}\big(\mathcal{L}w\big)(j), \ \forall j\in\mathbb{N}.
\end{equation}
Then we define the model operator $\tilde{H}:=H_{\tilde{\mathbf{a}}}$, with parameter sequence $\tilde{\mathbf{a}}(\mathbf{j})=\tilde{a}(|\mathbf{j}|)$, $\forall \mathbf{j}\in\mathbb{N}_0^d$. For the sequence $\tilde{a}$, we have the following lemma.

\begin{lem}\label{lem3''}
	Let $w$ be the function described in (\ref{eq2_11'}) and $\tilde{a}$ be the sequence defined in (\ref{eq2_4'}). Then  $\tilde{a}$ satisfies the following formula:
	\begin{equation}\label{0''''}
		\tilde{a}(j)=\left(b_1+(-1)^jb_{-1}\right)j^{-d}(\log j)^{-\gamma}+\tilde{g}_1(j)+(-1)^j\tilde{g}_{-1}(j), \ \forall j\geq2,
	\end{equation}
	where the error sequences $\tilde{g}_{\pm 1}$ present the following asymptotic behaviour:
	\begin{equation}\label{eqc_18'}
		\tilde{g}^{(m)}_{\pm1}(j)=O\left( j^{-d-m}(\log j)^{-\gamma-1} \right), \ j\to+\infty,
	\end{equation}
	for all $m\in\mathbb{N}_0$.
\end{lem}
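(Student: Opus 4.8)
The strategy is to write $\tilde a(j)$ as a main term plus a remainder by directly analysing the Laplace transform $\big(\mathcal{L}w\big)(j)$ via Lemma~\ref{lem2_3}, and then to control the iterated differences by exploiting the fact that differencing acts nicely on Laplace transforms. Since both summands in (\ref{eq2_4'}) are built from the same function $(\mathcal{L}w)(j)$ (one plain, one with the sign factor $(-1)^j$), it suffices to establish the asymptotics for $F(j):=\big(\mathcal{L}w\big)(j)$ and for all its iterated differences $F^{(m)}$, and then set $\tilde g_1=b_1(F(j)-j^{-d}(\log j)^{-\gamma})$ and $\tilde g_{-1}=b_{-1}(F(j)-j^{-d}(\log j)^{-\gamma})$; the decomposition (\ref{0''''}) then follows with $\tilde g_{\pm1}^{(m)}$ being $b_{\pm1}$ times the remainder after subtracting the $m$-th difference of $j^{-d}(\log j)^{-\gamma}$.

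\textbf{Step 1: leading asymptotics of $F(j)$.}
By (\ref{eq2_11'}), $w(\lambda)=\frac{1}{(d-1)!}\lambda^{d-1}|\log\lambda|^{-\gamma}\chi_0(\lambda)$, so $F(j)=\int_0^{+\infty}w(\lambda)e^{-\lambda j}\dd\lambda$. Split the integral at, say, $\lambda_0=\tfrac12$, where $\chi_0\equiv1$. On $(\tfrac12,+\infty)$ the integrand is smooth, compactly supported (since $\chi_0$ vanishes past $\tfrac34$) and bounded, so that piece is $O(e^{-j/2})$, which is negligible compared to any power of $j$ and will contribute only to the error terms. On $(0,\tfrac12)$ we have exactly $\frac{1}{(d-1)!}\int_0^{1/2}\lambda^{d-1}|\log\lambda|^{-\gamma}e^{-\lambda j}\dd\lambda=\frac{1}{(d-1)!}I_{d-1}(j)$ in the notation of Lemma~\ref{lem2_3}. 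Applying that lemma with $n=d-1$ gives
\begin{equation*}
	F(j)=\frac{1}{(d-1)!}\cdot(d-1)!\,j^{-d}|\log j|^{-\gamma}\big(1+O(|\log j|^{-1})\big)+O(e^{-j/2})=j^{-d}(\log j)^{-\gamma}+O\big(j^{-d}(\log j)^{-\gamma-1}\big),
\end{equation*}
for $j\geq2$ (note $\log j>0$ there). This is exactly the $m=0$ case of (\ref{eqc_18'}).

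\textbf{Step 2: iterated differences.}
Here the key observation is that differencing commutes with the Laplace transform up to an explicit factor:
\begin{equation*}
	F^{(m)}(j)=\int_0^{+\infty}w(\lambda)(e^{-\lambda}-1)^m e^{-\lambda j}\dd\lambda,
\end{equation*}
since $(\mathcal{L}w)(j+1)-(\mathcal{L}w)(j)=\int_0^{+\infty}w(\lambda)(e^{-\lambda}-1)e^{-\lambda j}\dd\lambda$ and the identity iterates. Now on $(0,\tfrac12)$ one writes $(e^{-\lambda}-1)^m=(-\lambda)^m\psi_m(\lambda)$ with $\psi_m$ smooth and $\psi_m(0)=1$, so that $w(\lambda)(e^{-\lambda}-1)^m=\frac{(-1)^m}{(d-1)!}\lambda^{d-1+m}|\log\lambda|^{-\gamma}\psi_m(\lambda)\chi_0(\lambda)$. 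Expanding $\psi_m$ in a Taylor polynomial of sufficiently high degree about $0$ and applying Lemma~\ref{lem2_3} term by term (with $n=d-1+m, d+m, \dots$), the leading term comes from $n=d-1+m$ and equals $\frac{(-1)^m}{(d-1)!}(d-1+m)!\,j^{-d-m}(\log j)^{-\gamma}(1+O(|\log j|^{-1}))$; the higher Taylor terms give $O(j^{-d-m-1}(\log j)^{-\gamma})$, and the Taylor remainder, being $O(\lambda^{d+m+k})$ for large $k$, contributes $O(j^{-d-m-k})$. Meanwhile the tail $(\tfrac12,+\infty)$ is again $O(e^{-j/2})$ after using $(e^{-\lambda}-1)^m$ is bounded on the support. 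On the other hand, a direct computation (Leibniz-type estimate for finite differences, or the mean value theorem applied $m$ times) shows that the $m$-th difference of the smooth function $h(x)=x^{-d}(\log x)^{-\gamma}$ satisfies $h^{(m)}(j)=(-1)^m\frac{(d-1+m)!}{(d-1)!}j^{-d-m}(\log j)^{-\gamma}+O(j^{-d-m}(\log j)^{-\gamma-1})$, because each difference behaves like a derivative and $h^{(m)}_{\mathrm{deriv}}(x)$ has precisely that leading behaviour. Subtracting, $\tilde g_{\pm1}^{(m)}(j)=b_{\pm1}\big(F^{(m)}(j)-h^{(m)}(j)\big)=O\big(j^{-d-m}(\log j)^{-\gamma-1}\big)$, which is (\ref{eqc_18'}).

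\textbf{Main obstacle.}
The routine but slightly delicate point is the bookkeeping in Step~2: one must confirm that the leading coefficient produced by Lemma~\ref{lem2_3} after the $m$-fold differencing matches exactly the leading coefficient of the $m$-th finite difference of $j^{-d}(\log j)^{-\gamma}$, so that the difference genuinely gains a factor $(\log j)^{-1}$ rather than merely being $O(j^{-d-m}(\log j)^{-\gamma})$. This requires being careful that (i) the combinatorial factor $(d-1+m)!/(d-1)!$ and the sign $(-1)^m$ agree on both sides, and (ii) the $O(|\log j|^{-1})$ error in Lemma~\ref{lem2_3} and the error in approximating the finite difference of $h$ by its derivative are both genuinely of order $(\log j)^{-\gamma-1}$ relative to the main term. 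Once this matching is verified, everything else---the exponentially small tails, the Taylor expansion of $\psi_m$, the fact that the two pieces $b_1(\mathcal{L}w)(j)$ and $(-1)^j b_{-1}(\mathcal{L}w)(j)$ are handled identically---is straightforward.
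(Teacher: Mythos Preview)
Your proposal is correct and follows essentially the same strategy as the paper: isolate $F(j)=(\mathcal{L}w)(j)$, apply Lemma~\ref{lem2_3} to extract the main term $j^{-d}(\log j)^{-\gamma}$, and verify that the remainder has iterated differences of order $O(j^{-d-m}(\log j)^{-\gamma-1})$ by matching leading coefficients.

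The one genuine difference lies in how the $m$-th difference is handled. The paper extends $\tilde g_1$ to a smooth function of a real variable $t>1$, differentiates under the integral sign (producing $\int \lambda^{d+m-1}|\log\lambda|^{-\gamma}\chi_0(\lambda)e^{-\lambda t}\dd\lambda$), applies Lemma~\ref{lem2_3} and the Leibniz rule to the derivative, and only at the end passes from derivative bounds to finite-difference bounds via the identity $\tilde g_1^{(m)}(j)=\int_{[0,1]^m}\tilde g_1^{(m)}(j+t_1+\cdots+t_m)\dd t_1\cdots\dd t_m$. You instead stay discrete throughout, using the algebraic identity $F^{(m)}(j)=\int w(\lambda)(e^{-\lambda}-1)^m e^{-\lambda j}\dd\lambda$ and Taylor-expanding $(e^{-\lambda}-1)^m=(-\lambda)^m\psi_m(\lambda)$. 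Your route is slightly more direct (no continuous-to-discrete passage needed), while the paper's route gives a cleaner integrand (a pure monomial in $\lambda$ rather than one requiring a Taylor expansion of $\psi_m$). Both routes require exactly the coefficient-matching step you flagged as the main obstacle, and both dispatch it the same way.
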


\begin{proof}
	First assume that $b_{-1}=0$ and $b_1\neq0$. Then
	\begin{equation*}
		\tilde{a}(j)=b_1\big(\mathcal{L}w\big)(j), \ \forall j\in\mathbb{N}_0,
	\end{equation*}
	and we aim to prove that
	\begin{equation}\label{0'}
		\tilde{a}(j)=b_1j^{-d}\left(\log j\right)^{-\gamma}+\tilde{g}_1(j), \ \forall j\geq 2,
	\end{equation}
	where the error term $\tilde{g}_1$ satisfies (\ref{eqc_18'}). Moreover, without loss of generality, assume that $b_1=1$, otherwise work with $\tfrac{\tilde{a}}{b_1}$. Let $\tilde{g}_1$ be the function below
	\begin{equation}\label{0''}
		\tilde{g}_1(t)=\frac{1}{(d-1)!}\int\limits_{0}^{+\infty}\lambda^{d-1}|\log\lambda|^{-\gamma}\chi_0(\lambda)e^{-\lambda t}\dd{\lambda}-t^{-d}|\log t|^{-\gamma}, \ \forall t>1,
	\end{equation}
	and notice that $\tilde{g}_1\in C^{\infty}(1,+\infty)$. More precisely, for every $m\in\mathbb{N}$ and any $t>1$,
	\begin{multline}\label{lll'}
		\tilde{g}_1^{(m)}(t)=\frac{(-1)^m}{(d-1)!}\int\limits_{0}^{+\infty}\lambda^{d+m-1}|\log\lambda|^{-\gamma}\chi_0(\lambda)e^{-\lambda t}\dd{\lambda}\\
		-\sum_{n=0}^m{m \choose n}\left(\frac{\dd^nt^{-d}}{\dd{t^n}}\right)\left(\frac{\dd^{m-n}(\log t)^{-\gamma}}{\dd{t^{m-n}}}\right).
	\end{multline}
	Moreover, for every $m\in\mathbb{N}_0$ and any $t>0$,
	\begin{multline*}
		\int\limits_{0}^{+\infty}\lambda^{d+m-1}|\log\lambda|^{-\gamma}\chi_0(\lambda)e^{-\lambda t}\dd{\lambda}=\int\limits_{0}^{\frac{1}{2}}\lambda^{d+m-1}|\log\lambda|^{-\gamma}e^{-\lambda t}\dd{\lambda}\\
		+\int\limits_{\frac{1}{2}}^{\frac{3}{4}}\lambda^{d+m-1}|\log\lambda|^{-\gamma}e^{-\lambda t}\chi_0(\lambda)\dd{\lambda}.
	\end{multline*}
	Notice that the second integral converges to zero exponentially fast when $t\to+\infty$. Thus Lemma \ref{lem2_3} yields
	\begin{equation}\label{l'}
		\int\limits_{0}^{+\infty}\lambda^{d+m-1}|\log\lambda|^{-\gamma}\chi_0(\lambda)e^{-\lambda t}\dd{\lambda}=(d+m-1)!\,t^{-d-m}(\log t)^{-\gamma}\left(1+O\left((\log t)^{-1}\right)\right),
	\end{equation}
	when $t\to+\infty$. Besides, notice that, for every $k\in\mathbb{N}$,
	\begin{equation*}
		\frac{\dd^k}{\dd{t^k}}(\log t)^{-\gamma}=O\left(t^{-k}(\log t)^{-\gamma-1}\right), \ t\to+\infty.
	\end{equation*}
	Thus, it is easily verified that,
	\begin{multline}\label{ll'}
		\sum_{n=0}^m{m \choose n}\left(\frac{\dd^nt^{-d}}{\dd{t^n}}\right)\left(\frac{\dd^{m-n}(\log t)^{-\gamma}}{\dd{t^{m-n}}}\right)=\frac{(-1)^m}{(d-1)!}(d+m-1)!\,t^{-d-m}(\log t)^{-\gamma}\\
		+O\left(t^{-d-m}(\log t)^{-\gamma-1}\right), \ \text{when } t\to+\infty.
	\end{multline}
	Then by putting (\ref{l'}) and (\ref{ll'}) back to (\ref{lll'}), we obtain that for every $m\in\mathbb{N}_0$,
	\begin{multline*}
		\tilde{g}_1^{(m)}(t)=\frac{(-1)^m}{(d-1)!}(d+m-1)!\,t^{-d-m}(\log t)^{-\gamma}\left(1+O\left((\log t)^{-1}\right)\right)\\
		-\frac{(-1)^m}{(d-1)!}(d+m-1)!\,t^{-d-m}(\log t)^{-\gamma}+O\left(t^{-d-m}(\log t)^{-\gamma-1}\right), \ \text{for } t\to+\infty.
	\end{multline*}
	Therefore, $\tilde{g}_1(t)$ satisfies the following smoothness property:
	\begin{equation}\label{lll}
		\tilde{g}_1^{(m)}(t)=O\left(t^{-d-m}(\log t)^{-\gamma-1}\right), \ \text{for } t\to+\infty, \ \forall m\in\mathbb{N}_0.
	\end{equation}
	In addition, by (\ref{0''}), the function $\tilde{a}(t):=b_1\big(\mathcal{L}w\big)(t)$, $\forall t>0$, satisfies
	\begin{equation*}
		\tilde{a}(t)=t^{-d}\left|\log t\right|^{-\gamma}+\tilde{g}_1(t), \ \forall t>1.
	\end{equation*}
	Thus, by restricting $\tilde{a}$ on the set of integers greater than or equal to $2$, we get (\ref{0'}). The relation (\ref{eqc_18'}) for $\tilde{g}_1(j)$ is obtained by noticing that $\{\tilde{g}_1(j)\}_{j\geq2}$ is the restriction of the function $\tilde{g}_1$ on the set of integers greater than one, so
	\begin{equation*}
		\tilde{g}_1(j)=O(j^{-d}\left|\log j\right|^{-\gamma-1}), \ t\to+\infty,
	\end{equation*}
	and also
	\begin{equation*}
		\tilde{g}_1^{(m)}(j)=\int_{0}^1\int_{0}^1\dots\int_{0}^1\tilde{g}^{(m)}_1(j+t_1+t_2+\dots+t_m)\dd{t_m}\dots\dd{t_2}\dd{t_1}, \ \forall j\geq 2, \ \forall m\in\mathbb{N},
	\end{equation*}
	where $\tilde{g}_1(t)$ satisfies (\ref{lll}). As a result,
	\begin{equation*}
		\tilde{g}_1^{(m)}(j)=O(j^{-d-m}\left|\log j\right|^{-\gamma-1}), \ t\to+\infty,
	\end{equation*}
	for every $m\in\mathbb{N}$.
	
	\noindent Finally, by repeating the same arguments when $b_1=0$ and $b_{-1}\neq0$, we obtain that
	\begin{equation}\label{0'''}
		\tilde{a}(j)=(-1)^jb_{-1}j^{-d}\left(\log j\right)^{-\gamma}+(-1)^j\tilde{g}_{-1}(j), \ \forall j\geq 2,
	\end{equation}
	where the error term $\tilde{g}_{-1}$ satisfies (\ref{eqc_18'}). By combining (\ref{0'}) and (\ref{0'''}) together we eventually obtain (\ref{0''''}).
\end{proof}

\section{Reduction to pseudo-differential operators}\label{PDO}

Let $\tilde{a}$ (see (\ref{eq2_4'})) be the parameter sequence of the model operator $\tilde{H}$. Then
\begin{equation*}
	\tilde{a}(j)=\tilde{a}_1(j)+\tilde{a}_{-1}(j),
\end{equation*}
where
\begin{equation}\label{1''}
	\tilde{a}_{\pm1}(j)=(\pm1)^{j}b_{\pm1}\big(\mathcal{L}w\big)(j), \ \forall j\in\mathbb{N}_0,
\end{equation}
and $w$ is defined in (\ref{eq2_11'}). Then $\tilde{a}_{1}$ (resp. $\tilde{a}_{-1}$) defines the Hankel operator $\tilde{H}_1$ (resp. $\tilde{H}_{-1}$), with parameter sequence $\tilde{a}_1(|\mathbf{j}|)$, for all $\mathbf{j}\in\mathbb{N}_0^d$ (resp. $\tilde{a}_{-1}(|\mathbf{j}|)$). Thus, $\tilde{H}=\tilde{H}_{1}+\tilde{H}_{-1}$. We reduce the spectral analysis of $\tilde{H}_{\pm1}$ to that of some pseudo-differential operators $\Psi_{\pm1}$.

\begin{lem}\label{R}
	For $j=1,2,\dots,d-1$, let $R_j:L^2(\mathbb{R}_+)\to L^2(\mathbb{R}_+)$ be the integral operator
	\begin{equation*}
		\big(R_jf\big)(t)=\int_0^{+\infty}\sqrt{w(t)}\frac{f(s)}{(s+t)^j}\sqrt{w(s)}\dd{s}, \ \forall t>0, \ \forall f\in L^2(\mathbb{R}_+).
	\end{equation*}
	Then $R_j\in\bigcap_{p>0}\mathbf{S}_p$, for all $j=1,2,\dots,d-1$.
\end{lem}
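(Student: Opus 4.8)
The plan is to show that each $R_j$ is a nice integral operator with a smooth, rapidly decaying kernel once one changes to a logarithmic variable, so that Lemma \ref{lem3} applies. First I would write the kernel of $R_j$ explicitly as $k_j(t,s)=\sqrt{w(t)}\,(s+t)^{-j}\sqrt{w(s)}$, where by \eqref{eq2_11'} we have $\sqrt{w(t)}=\frac{1}{\sqrt{(d-1)!}}\,t^{(d-1)/2}|\log t|^{-\gamma/2}\sqrt{\chi_0(t)}$, supported in $(0,3/4]$. Since $j\le d-1$, the factor $(s+t)^{-j}$ is dominated near the diagonal by, say, $(st)^{-j/2}$, so the kernel behaves like $(st)^{(d-1-j)/2}$ up to logarithmic weights near the origin, which is bounded since $d-1-j\ge 0$; away from the origin the cut-off $\chi_0$ kills everything. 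This already suggests $R_j$ is Hilbert--Schmidt, but to land in every $\mathbf{S}_p$ I would perform the exponential substitution $t=e^{-x}$, $s=e^{-y}$ (the same device used to pass from $LL^*$ to a pseudo-differential operator in \S\ref{PDO}), which maps $(0,3/4]$ to a half-line $[x_0,+\infty)$ and turns $(s+t)^{-j}$ into $(e^{-x}+e^{-y})^{-j}$.

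The key observation is that after this change of variables the transformed kernel
\begin{equation*}
	\varkappa_j(x,y)=\frac{e^{-(x+y)/2}\sqrt{w(e^{-x})w(e^{-y})}}{(e^{-x}+e^{-y})^{j}}
\end{equation*}
(the $e^{-(x+y)/2}$ coming from the Jacobian $\dd t\,\dd s=e^{-x-y}\dd x\,\dd y$ split symmetrically) is a $C^\infty$ function on $\mathbb{R}^2$ — after extending by zero, using that $\chi_0$ is smooth and compactly supported away from $t=0$ — and, crucially, has compact support: the cut-off $\chi_0(e^{-x})$ forces $x,y\ge x_0:=\log(4/3)$, while the decaying weight $w(e^{-x})=\frac{1}{(d-1)!}e^{-(d-1)x}x^{-\gamma}\chi_0(e^{-x})$ together with $(e^{-x}+e^{-y})^{-j}\le e^{jx}$ (say, when $x\le y$) gives, since $j\le d-1$, a factor $e^{-(d-1-j)x}x^{-\gamma}$ that decays as $x\to+\infty$; so although not literally compactly supported in $x$, the kernel and all its derivatives decay faster than any polynomial. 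To get into the exact hypothesis of Lemma \ref{lem3} I would multiply and split: write $\varkappa_j=\varkappa_j\eta+\varkappa_j(1-\eta)$ for a smooth cut-off $\eta$ equal to $1$ on a large box; the compactly-supported piece $\varkappa_j\eta\in C_c^\infty(\mathbb{R}^2)$ is in every $\mathbf{S}_p$ by Lemma \ref{lem3}, and the tail $\varkappa_j(1-\eta)$ has a Schwartz-type kernel on a half-space, which is standard to place in every $\mathbf{S}_p$ (e.g. by a further dyadic decomposition and Lemma \ref{lem3} on each piece, summing the rapidly decaying $\mathbf{S}_p$-norms). Either way, $R_j$ is unitarily equivalent (via the $L^2$-isometry induced by $t=e^{-x}$) to this integral operator, hence $R_j\in\bigcap_{p>0}\mathbf{S}_p$.

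The main obstacle is the behaviour at $t=0$: the weight $\sqrt{w(t)}$ carries the singular factor $|\log t|^{-\gamma/2}$, which is not smooth at $t=0$, and $(s+t)^{-j}$ is singular on the diagonal there. The exponential substitution resolves both at once — it pushes $t=0$ to $x=+\infty$, turns $|\log t|^{-\gamma/2}$ into the harmless polynomial-type factor $x^{-\gamma/2}$ on $[x_0,\infty)$, and converts the diagonal singularity $(s+t)^{-j}$ into $(e^{-x}+e^{-y})^{-j}$, which is smooth and bounded by $\min(e^{jx},e^{jy})$. I would therefore spend most of the write-up verifying carefully that $\varkappa_j$ and all its partial derivatives are continuous on $\mathbb{R}^2$ and decay rapidly, using $j\le d-1$ at exactly the point where one needs $e^{-(d-1-j)x}$ to be bounded (nonincreasing) rather than exploding. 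The condition $j\le d-1$ in the statement is thus essential and is precisely what makes the transformed kernel well-behaved.
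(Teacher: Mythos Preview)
Your approach is essentially correct but takes a genuinely different route from the paper. After the same exponential change of variable, the paper does \emph{not} attack the kernel $\varkappa_j$ directly. Instead it uses the identity $e^{-x}+e^{-y}=2e^{-(x+y)/2}\cosh\bigl(\tfrac{x-y}{2}\bigr)$ to factorise
\[
R_j \;=\; U^*\,\alpha_j^{1/2}(X)\,T_j\,\alpha_j^{1/2}(X)\,U,
\qquad
\alpha_j(x)=2^{-d}e^{-(j-1)x}w(e^x),
\]
where $T_j$ is convolution with $\phi_j=\cosh^{-j}(\cdot/2)$. Writing $T_j=\mathcal{F}\beta_j^{2}(X)\mathcal{F}^*$ with $\beta_j=\sqrt{\check{\phi}_j}$ and applying the $L^*L\simeq LL^*$ trick (Lemma~\ref{lem}) turns this into the pseudo-differential operator $\beta_j(X)\alpha_j(\tfrac{1}{2\pi}D)\beta_j(X)$; since $j\le d-1$ forces $\alpha_j(x)$ to decay \emph{exponentially} as $x\to-\infty$ (and to vanish identically for large $x$), Lemma~\ref{lem2_2} with $\alpha(\pm\infty)=0$ for every $\gamma$ gives $\bigcap_{p>0}\mathbf{S}_p$ in one stroke. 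What this buys is twofold: it reuses exactly the machinery already needed for $S_{\pm1}-E_{\pm1}$, and---more subtly---by passing from $\alpha_j^{1/2}$ to $\alpha_j$ via $L^*L\simeq LL^*$ it sidesteps the issue of whether $\sqrt{\chi_0}$ (hence $\sqrt{w}$) is $C^\infty$ at the edge of its support, which your kernel $\varkappa_j$ inherits. Your direct argument is more elementary and perfectly viable, but you should be aware of two loose ends: (i) $\varkappa_j$ is not compactly supported, so Lemma~\ref{lem3} does not apply as stated---your dyadic splitting does work, but you must actually sum the pieces in $\mathbf{S}_p$ for $p<1$ (quasi-norm, not norm); and (ii) the $C^\infty$ claim for $\varkappa_j$ hinges on $\sqrt{\chi_0}$ being smooth at $t=3/4$, which is true because $\chi_0$ vanishes there to infinite order, but deserves a sentence.
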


\begin{proof}
	Let $U:L^2(\mathbb{R}_+)\to L^2(\mathbb{R})$ be the unitary transformation that is given by
	\begin{equation}\label{13}
		\big(Uf\big)(x)=e^{\frac{x}{2}}f(e^x), \ \forall x\in\mathbb{R}, \ \forall f\in L^2(\mathbb{R}_+).
	\end{equation}
	Therefore, by applying the change of variable $s=e^y$ and setting $x=\log t$,
	\begin{equation*}
		\big(R_jf\big)(e^x)=\int_{\mathbb{R}}\sqrt{w(e^x)}\frac{f(e^y)e^y}{(e^x+e^y)^j}\sqrt{w(e^y)}\dd{y}, \ \forall x\in\mathbb{R}, \ \forall f\in L^2(\mathbb{R}_+).
	\end{equation*}
	Moreover, observe that $e^x+e^y=2e^{\frac{x+y}{2}}\cosh(\frac{x-y}{2})$, so that, for any $f\in L^2(\mathbb{R}_+)$,
	\begin{equation*}
		\big(UR_jf\big)(x)=\int_{\mathbb{R}}\sqrt{2^{-d}e^{-(j-1)x}w(e^x)}\frac{\big(Uf\big)(y)}{\cosh[j](\frac{x-y}{2})}\sqrt{2^{-d}e^{-(j-1)y}w(e^y)}\dd{y}, \ \forall x\in\mathbb{R},
	\end{equation*}
for all $j=1,2,\dots,d-1$. For any $j=1,2,\dots,d-1$, define the functions
\begin{equation}\label{llll}
	\alpha_j(x)=2^{-d}e^{-(j-1)x}w(e^x), \ \forall x\in\mathbb{R}.
\end{equation}
Then
\begin{equation*}
	R_j=U^*\alpha_j^{1/2}(X)T_j\alpha_j^{1/2}(X)U,
\end{equation*}
where the operator $T_j:L^2(\mathbb{R})\to L^2(\mathbb{R})$ is the convolution operator with the function $\phi_j$; i.e. for any $j\in\mathbb{N}$,
\begin{equation}\label{l"}
	\big(T_jf\big)(x)=\big(\phi_j*f\big)(x), \ \forall x\in\mathbb{R}, \ \forall f\in L^2(\mathbb{R}),
\end{equation}
where $\phi_j$ is given by (\ref{eq2}). In order $R_j$ to belong to a Schatten class $\mathbf{S}_p$, it is enough to prove that $\alpha_j^{1/2}(X)T_j\alpha_j^{1/2}(X)\in\mathbf{S}_p$, for $j=1,2,\dots,d-1$. To see this, observe that $T_j=\mathcal{F}\beta_j^2(X)\mathcal{F}^*$, where
\begin{equation}\label{l''}
	\beta_j(x)=\sqrt{\check{\phi}_j(x)}, \ \forall x\in\mathbb{R}, \ \forall j\in\mathbb{N}.
\end{equation}
Note that $\check{\phi}_1(x)=2\pi(\cosh(2\pi^2x))^{-1}$, and the latter is positive for any $x\in\mathbb{R}$. Since the convolution of positive functions is positive, $\check{\phi}_j>0$, and thus, $\beta_j$ is well-defined. Then
$$\alpha_j^{1/2}(X)T_j\alpha_j^{1/2}(X)=\alpha_j^{1/2}(X)\mathcal{F}\beta_j^2(X)\mathcal{F}^*\alpha_j^{1/2}(X),$$
and Lemma \ref{lem} implies that the latter is unitarily equivalent (modulo kernels) to the pseudo-differential operator $\beta_j(X)\alpha_j(\tfrac{1}{2\pi}D)\beta_j(X)$. Moreover, (\ref{llll}) implies that
\begin{equation*}
	\alpha_j(x)=\begin{cases}
		0, & \text{when } x\to+\infty\\
		\frac{1}{2^d(d-1)!}e^{-(d-j)|x|}\left|x\right|^{-\gamma}, & \text{when } x\to-\infty
	\end{cases}, \ \forall j=1,2,\dots,d-1.
\end{equation*}
Since $\alpha_j(x)$ decays exponentially fast, when $x\to-\infty$, Lemma \ref{lem2_2} indicates that the pseudo-differential operator $\beta_j(X)\alpha(\tfrac{1}{2\pi}D)\beta_j(X)$ and thus, $\alpha_j^{1/2}(X)T_j\alpha_j^{1/2}(X)$, belong to $\bigcap_{p>0}\mathbf{S}_p$, for all $j=1,2,\dots,d~-~1$.
\end{proof}

\begin{lem}\label{lem2'}
	Let $\tilde{H}_1$ and $\tilde{H}_{-1}$ be the Hankel operators that were defined at the start of Section \ref{PDO}, with parameter sequences $\tilde{a}_1(|\mathbf{j}|)$ and $\tilde{a}_{-1}(|\mathbf{j}|)$, for all $\mathbf{j}\in\mathbb{N}_0^d$, respectively; where $\tilde{a}_{\pm1}$ have been defined in (\ref{1''}). Then there exist two couples of operators $S_1$, $S_{-1}$ and $E_1$, $E_{-1}$, defined on $L^2(\mathbb{R}_+)$, such that
	\begin{enumerate}[(i)]
		\item $\tilde{H}_{\pm1}$ is unitarily equivalent (modulo kernels) to $S_{\pm1}$,
		\item $E_{\pm1}\in\bigcap_{p>0}\mathbf{S}_p$, and
		\item $S_{\pm1}-E_{\pm1}$ is unitarily equivalent (modulo kernels) to a pseudo-differential operator $\Psi_{\pm1}:L^2(\mathbb{R})\to L^2(\mathbb{R})$. More precisely, $\Psi_{\pm1}=\beta(X)\alpha_{\pm1}(\frac{1}{2\pi}D)\beta(X)$, where
		\begin{equation}\label{L17}
			\alpha_{\pm1}(x)=2^{-d}e^{-(d-1)x}b_{\pm1}w(e^x), \ \ \beta(x)=\sqrt{\check{\phi}_d(x)}, \ \forall x\in\mathbb{R},
		\end{equation}
		and $\phi_d$ is defined in (\ref{eq2}).
	\end{enumerate}
\end{lem}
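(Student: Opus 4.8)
The plan is to mimic, in the multi-variable setting, the reduction sketched in the proof outline: express $\tilde H_{\pm1}$ as $L^*L$ with $L$ a Laplace-type operator, pass to $LL^*$ via Lemma \ref{lem}, and then use the exponential change of variable $U$ from (\ref{13}) to turn $LL^*$ into a pseudo-differential operator. The complication relative to $d=1$ is that $\sum_{\mathbf{j}\in\mathbb{N}_0^d}e^{-|\mathbf{j}|t}\mathbf{x}(\mathbf{j})$ does not collapse to a single geometric sum; instead the number of $\mathbf{j}\in\mathbb{N}_0^d$ with $|\mathbf{j}|=k$ equals $\binom{k+d-1}{d-1}$, a polynomial in $k$ of degree $d-1$ with leading coefficient $\tfrac{1}{(d-1)!}$. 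So the first step is to compute $(\tilde H_{\pm1}\mathbf{x},\mathbf{y})$, swap summation and integration using $\tilde a_{\pm1}(k)=(\pm1)^k b_{\pm1}(\mathcal L w)(k)$, and obtain $(\tilde H_{\pm1}\mathbf{x},\mathbf{y})=\int_0^{+\infty}(L_{\pm1}\mathbf{x})(t)\overline{(L_{\pm1}\mathbf{y})(t)}\,\dd t$ where $(L_{\pm1}\mathbf{x})(t)=\sqrt{|b_{\pm1}w(t)|}\sum_{\mathbf j}(\pm1)^{|\mathbf j|}e^{-|\mathbf j|t}\mathbf x(\mathbf j)$ (with a sign adjustment if $b_{\pm1}<0$, handled as in the ``general case'' remark about $w$ not being non-negative — concretely one writes $b_{\pm1}w=|b_{\pm1}|w$ and absorbs $\operatorname{sgn} b_{\pm1}$ into the final operator, which only changes $\alpha_{\pm1}$ by a sign and is why (\ref{L17}) carries $b_{\pm1}$). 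Then $\tilde H_{\pm1}=L_{\pm1}^*L_{\pm1}$, so by Lemma \ref{lem} $\tilde H_{\pm1}\simeq L_{\pm1}L_{\pm1}^*=:S_{\pm1}$ on $L^2(\mathbb R_+)$; this gives (i).

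Next I would compute the kernel of $S_{\pm1}=L_{\pm1}L_{\pm1}^*$ explicitly. It is
\[
(S_{\pm1}f)(t)=\sqrt{|b_{\pm1}w(t)|}\int_0^{+\infty}\Big(\sum_{\mathbf j\in\mathbb N_0^d}(\pm1)^{2|\mathbf j|}e^{-|\mathbf j|(s+t)}\Big)\sqrt{|b_{\pm1}w(s)|}\,f(s)\,\dd s
\]
(for the $-1$ case $(\pm1)^{2|\mathbf j|}=1$, while a sign from $\operatorname{sgn}b_{\pm1}$ multiplies the whole kernel). The inner sum is $\sum_{k\ge0}\binom{k+d-1}{d-1}e^{-k(s+t)}=(1-e^{-(s+t)})^{-d}$. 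Now expand $\binom{k+d-1}{d-1}=\tfrac{1}{(d-1)!}k^{d-1}+(\text{lower order in }k)$, equivalently write
\[
(1-e^{-u})^{-d}=\frac{e^{-u(d-1)\cdot 0}}{(2\sinh(u/2))^{d}}\,e^{?}\quad\text{— more usefully}\quad (1-e^{-u})^{-d}=\Big(\frac{e^{u/2}}{2\sinh(u/2)}\Big)^d = \frac{e^{du/2}}{(2\sinh(u/2))^d},
\]
and split off the principal part. The cleanest route is: $(1-e^{-u})^{-d}=\dfrac{1}{2^d\sinh^d(u/2)}\,e^{du/2}$, and then isolate from the factor $e^{du/2}$ the piece that produces $\cosh$-type behaviour. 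Concretely one writes $(1-e^{-(s+t)})^{-d}$ as a principal term proportional to $\big(2\cosh(\tfrac{s+t}{2})\big)^{-d}e^{(\cdots)}$ — matching the $\phi_d$ of (\ref{eq2}) after the change of variables — plus a finite linear combination $\sum_{j=1}^{d-1}c_j (s+t)^{-j}\times(\text{smooth bounded})$ of lower-order terms whose associated integral operators are exactly the $R_j$ of Lemma \ref{R} (up to smooth weights), hence lie in $\bigcap_{p>0}\mathbf S_p$. Actually the honest decomposition I will use is the partial-fraction-type expansion valid near $u=0$: $(1-e^{-u})^{-d} = u^{-d}(1+O(u))$, and globally on the support of $w$ one can write it as $(\text{target kernel})+(\text{sum of }R_j\text{-type kernels})$; collecting all the $R_j$-contributions (plus any $C_c^\infty(\mathbb R^2)$ remainder, handled by Lemma \ref{lem3}) into a single operator $E_{\pm1}$ gives (ii).

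For (iii), after subtracting $E_{\pm1}$ the operator $S_{\pm1}-E_{\pm1}$ has kernel $\sqrt{|b_{\pm1}w(t)|}\,K_d(s+t)\sqrt{|b_{\pm1}w(s)|}$ with $K_d$ the principal part; applying the unitary $U$ of (\ref{13}), using $e^x+e^y=2e^{(x+y)/2}\cosh(\tfrac{x-y}{2})$ exactly as in the proof of Lemma \ref{R}, the factors $e^{(x+y)/2}$ to the power $d$ combine with $e^{-(d-1)x/2}$ weights from $U$ and from $w$ to leave a convolution kernel $\cosh^{-d}(\tfrac{x-y}{2})=\phi_d(x-y)$ sandwiched between multiplication operators $\sqrt{\alpha_{\pm1}(X)}$, where $\alpha_{\pm1}(x)=2^{-d}e^{-(d-1)x}b_{\pm1}w(e^x)$ as claimed in (\ref{L17}) (the sign of $b_{\pm1}$ reappearing here). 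That is, $U(S_{\pm1}-E_{\pm1})U^*=\alpha_{\pm1}^{1/2}(X)\,T_d\,\alpha_{\pm1}^{1/2}(X)$ with $T_d$ convolution by $\phi_d$, and since $T_d=\mathcal F\beta^2(X)\mathcal F^*$ with $\beta=\sqrt{\check\phi_d}$ (positive, by the convolution-of-positives argument already given for $\check\phi_j$), Lemma \ref{lem} yields unitary equivalence modulo kernels to $\beta(X)\alpha_{\pm1}(\tfrac1{2\pi}D)\beta(X)=\Psi_{\pm1}$, using $\alpha_{\pm1}(D)=\mathcal F^*\alpha_{\pm1}(2\pi X)\mathcal F$. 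I expect the main obstacle to be the bookkeeping in the decomposition of $(1-e^{-(s+t)})^{-d}$: one must verify that the principal term, after the change of variables, produces \emph{exactly} $\phi_d$ with \emph{exactly} the weight $\alpha_{\pm1}$ of (\ref{L17}), and that every remainder term is genuinely of $R_j$-type (or smooth compactly supported) so that Lemmas \ref{R} and \ref{lem3} apply — the cut-off $\chi_0$ in $w$ is what keeps these remainders under control away from $u=0$, and the $|\log|^{-\gamma}$ singularity only lives near $0$ where the expansion $(1-e^{-u})^{-d}=u^{-d}+\dots$ is clean.
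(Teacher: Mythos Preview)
Your overall strategy is correct and coincides with the paper's: factor $\tilde H_{\pm1}=L_{\pm1}^*L_{\pm1}$, set $S_{\pm1}=L_{\pm1}L_{\pm1}^*$, compute its kernel as $\sqrt{w(t)}\,(1-e^{-(s+t)})^{-d}\sqrt{w(s)}$ (up to the sign $\operatorname{sgn}b_{\pm1}$, with $S_{-1}=S_1$ since $(\pm1)^{2|\mathbf j|}=1$), peel off a remainder $E_{\pm1}\in\bigcap_{p>0}\mathbf S_p$, and then conjugate the principal part by $U$ to reach $\alpha_{\pm1}^{1/2}(X)T_d\,\alpha_{\pm1}^{1/2}(X)\simeq\beta(X)\alpha_{\pm1}(\tfrac{1}{2\pi}D)\beta(X)$.

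The one place your write-up drifts is the decomposition of $(1-e^{-u})^{-d}$. The $\sinh$/$\cosh$ manipulation you attempt first is a detour: the $\cosh^{-d}$ structure does \emph{not} come from rewriting $(1-e^{-u})^{-d}$; it appears only \emph{after} the change of variables, via $(e^x+e^y)^{-d}=2^{-d}e^{-d(x+y)/2}\cosh^{-d}\!\big(\tfrac{x-y}{2}\big)$ applied to the principal kernel $(s+t)^{-d}$. The clean split the paper uses (and that you eventually gesture at) is simply
\[
(1-e^{-(s+t)})^{-d}=(s+t)^{-d}+\rho(s+t),
\]
where $\rho$ is real analytic away from $0$ with a pole of order $d-1$ at $0$; then $\rho(t)=\sum_{j=1}^{d-1}c_{-j}t^{-j}+\rho_{\mathrm{an}}(t)$ with $\rho_{\mathrm{an}}$ real analytic. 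The $t^{-j}$ pieces give exactly the operators $R_j$ of Lemma~\ref{R}, and $\sqrt{\chi_0(t)}\rho_{\mathrm{an}}(s+t)\sqrt{\chi_0(s)}\in C_c^\infty(\mathbb R^2)$ is handled by Lemma~\ref{lem3} (the bounded factor $t^{d-1}|\log t|^{-\gamma}$ in $w$ does not spoil this). So set $K_d(u)=u^{-d}$ explicitly; everything else in your part (iii) then goes through verbatim and matches (\ref{L17}).
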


\begin{proof}
First of all, notice that in Lemma \ref{R}, we proved that $\beta$ is well defined, since $\beta=\beta_d$, where the latter is given by (\ref{l''}). We prove the assertion for $\tilde{H}_1$ and the proof for $\tilde{H}_{-1}$ is completely analogous. Moreover, we can assume that $b_1=1$, otherwise work with $\frac{1}{b_1}\tilde{H}_1$.\\ \ \\
	\noindent\textit{(i)} Let $\mathbf{x},\mathbf{y}\in\ell^2(\mathbb{N}_0^d)$. Then
	\begin{equation*}
		\begin{split}
			(\tilde{H}_1\mathbf{x},\mathbf{y}) & = \sum_{\mathbf{i},\mathbf{j}\in\mathbb{N}_0^d} \tilde{a}_1(|\mathbf{i}+\mathbf{j}|)\mathbf{x}(\mathbf{j})\overline{\mathbf{y}}(\mathbf{i})\\
			& = \sum_{\mathbf{i},\mathbf{j}\in\mathbb{N}_0^d}\int_{0}^{+\infty}w(t)e^{-(|\mathbf{i}+\mathbf{j}|)t}\dd{t}\mathbf{x}(\mathbf{j})\overline{\mathbf{y}}(\mathbf{i})\\
			& = (L_1\mathbf{x},L_1\mathbf{y}),
		\end{split}
	\end{equation*}
	where $L_1:\ell^2(\mathbb{N}_0^d)\to L^2(\mathbb{R}_+)$ is defined by
	\begin{equation}\label{14'}
		\big(L_1\mathbf{x}\big)(t)=\sqrt{w(t)}\sum_{\mathbf{j}\in\mathbb{N}_0^d}e^{-|\mathbf{j}|t}\mathbf{x}(\mathbf{j}), \ \forall t\in\mathbb{R}_+, \ \forall \mathbf{x}\in\ell^2(\mathbb{N}_0^d).
	\end{equation}
	Notice that the interchange of summation and integration is justified by the uniform convergence of $\sum_{\mathbf{j}\in\mathbb{N}_0^d}e^{-|\mathbf{j}|t}$, in $\mathbb{R}_+$. Therefore, $\tilde{H}_1=L^*_1L_1$. Moreover, it is not difficult to verify that the formula for the adjoint operator $L_1^*:L^2(\mathbb{R}_+)\to\ell^2(\mathbb{N}_0^d)$ is the following:
	\begin{equation}\label{15'}
		\big(L^*_1f\big)(\mathbf{j})=\int_0^{+\infty}\sqrt{w(t)}f(t)e^{-|\mathbf{j}|t}\dd{t}, \ \forall \mathbf{j}\in\mathbb{N}_0^d, \ \forall f\in L^2(\mathbb{R}_+).
	\end{equation}
	In addition, Lemma \ref{lem} implies that the non-zero parts of $\tilde{H}_1$ and $S_1:=L_1L_1^*$ are unitarily equivalent. Now observe that $S_1:L^2(\mathbb{R}_+)\to L^2(\mathbb{R}_+)$ and
	\begin{equation}\label{16'}
		\begin{split}
			\big(S_1f\big)(t) & = \sqrt{w(t)} \sum_{\mathbf{j}\in\mathbb{N}_{0}^{d}}\int_{0}^{+\infty} f(s) \sqrt{w(s)} e^{-(t+s)|\mathbf{j}|}\dd{s}\\
			& = \int_{0}^{+\infty} \sqrt{w(t)}\frac{f(s)}{\left( 1-e^{-(s+t)} \right)^d}\sqrt{w(s)}\dd{s}, \ \forall t\in\mathbb{R}_+, \ \forall f\in L^2(\mathbb{R}_+).
		\end{split}
	\end{equation}
	\begin{remark}
		Observe that the respective formulae for $L_{-1}$ and $L_{-1}^*$, assuming that $b_{-1}=1$, will be
		\begin{equation}\label{14''}
			\big(L_{-1}\mathbf{x}\big)(t)=\sqrt{w(t)}\sum_{\mathbf{j}\in\mathbb{N}_0^d}(-1)^{|\mathbf{j}|}e^{-|\mathbf{j}|t}\mathbf{x}(j), \ \forall t\in\mathbb{R}_+, \ \forall \mathbf{x}\in\ell^2(\mathbb{N}_0^d),
		\end{equation}
		and
		\begin{equation}\label{15''}
			\big(L^*_{-1}f\big)(\mathbf{j})=(-1)^{|\mathbf{j}|}\int_0^{+\infty}\sqrt{w(t)}f(t)e^{-|\mathbf{j}|t}\dd{t}, \ \forall \mathbf{j}\in\mathbb{N}_0^d, \ \forall f\in L^2(\mathbb{R}_+),
		\end{equation}
		so that $S_{-1}=S_1$.
	\end{remark}
	\ \\ \
	\noindent\textit{(ii)} Observe the formula (\ref{16'}) for $S_1$ and that
	\begin{equation*}
		\frac{1}{\left( 1-e^{-(s+t)} \right)^d}=\frac{1}{(s+t)^d}+\rho(s+t),
	\end{equation*}
	where $\rho$ is real analytic with a pole of order $d-1$ at $0$. Now define the operator $E_1:L^2(\mathbb{R}_+)\to ~L^2(\mathbb{R}_+)$, with
	\begin{equation}\label{E}
		\big(E_1f\big)(t)=\int_{0}^{+\infty} \sqrt{w(t)}f(s)\rho(s+t)\sqrt{w(s)}\dd{s}, \ \forall t\in\mathbb{R}_+, \ \forall f\in L^2(\mathbb{R}_+).
	\end{equation}
	The function $\rho$ can be written as
	\begin{equation*}
		\rho(t)= \sum_{j=1}^{d-1}\frac{c_{-j}}{t^j}+\rho_{\text{an}}(t), \ \forall t\neq0,
	\end{equation*}
	where $\rho_{\text{an}}$ is real analytic and $c_{-j}$ are real constants. Now notice that the function $\sqrt{\chi_0(t)}\rho_{\text{an}}(s+t)\sqrt{\chi_0(s)}$, where $\chi_0$ is defined in (\ref{eqn0'}), belongs to $C^\infty_c(\mathbb{R}^2)$. Then, according to Lemma \ref{lem3}, the integral operator with kernel $\sqrt{\chi_0(t)}\rho_{\text{an}}(s+t)\sqrt{\chi_0(s)}$, belongs to any Schatten class $\mathbf{S}_p$. Moreover, the function $t^{d-1}|\log t|^{-\gamma}$ is bounded near $0$, so that the integral operator with kernel $\sqrt{w(t)}\rho_{\mathrm{an}}(s+t)\sqrt{w(s)}$ belongs to any Schatten class $\mathbf{S}_p$. It remains to prove the same for the integral operators $R_j$ with kernel $\sqrt{w(t)}(s+t)^{-j}\sqrt{w(s)}$, where $j=1,2,\dots,d-1$, which holds true due to Lemma \ref{R}.\\ \ \\
	\noindent\textit{(iii)} By recalling the definitions of $S_1$ in (\ref{16'}) and $E_1$ in (\ref{E}), $S_1-E_1$ is also an operator on $L^2(\mathbb{R}_+)$, described by
	\begin{equation*}
		(S_1-E_1)f(t)=\int_{0}^{+\infty} \sqrt{w(t)}\frac{f(s)}{(s+t)^d}\sqrt{w(s)}\dd{s}, \ \forall t\in\mathbb{R}_+, \ \forall f\in L^2(\mathbb{R}_+).
	\end{equation*}
	Let $U:L^2(\mathbb{R}_+)\to L^2(\mathbb{R})$ be the unitary transformation that was defined in (\ref{13}). Then, by applying the change of variable $s=e^y$ and setting $x=\log t$,
	\begin{equation*}
		(S_1-E_1)f(e^x)=\int_{\mathbb{R}}\sqrt{w(e^x)}\frac{f(e^y)e^y}{(e^x+e^y)^d}\sqrt{w(e^y)}\dd{y}, \ \forall x\in\mathbb{R}.
	\end{equation*}
	As a result, for any $f\in L^2(\mathbb{R}_+)$,
	\begin{equation*}
		U(S_1-E_1)f(x)=\int_{\mathbb{R}}\sqrt{2^{-d}e^{-(d-1)x}w(e^x)}\frac{\big(Uf\big)(y)}{\cosh[d](\frac{x-y}{2})}\sqrt{2^{-d}e^{-(d-1)y}w(e^y)}\dd{y}, \ \forall x\in\mathbb{R}.
	\end{equation*}
	Then, $S_1-E_1=U^*\alpha_1^{1/2}(X)T_d\alpha_1^{1/2}(X)U$, where
	\begin{equation*}
		\alpha_1(x)=2^{-d}e^{-(d-1)x}w(e^x), \ \forall x\in\mathbb{R},
	\end{equation*}
	and $T_d$ is defined in (\ref{l"}). Notice that $T_d=\mathcal{F}\beta^2(X)\mathcal{F}^*$, where $\beta$ is defined in (\ref{L17}). Therefore,
	\begin{equation*}
		\begin{split}
			S_1-E_1 & =U^*\alpha_1^{1/2}(X)T_d\alpha_1^{1/2}(X)U\\
			& = U^*\alpha_1^{1/2}(X)\mathcal{F}\beta^2(X)\mathcal{F}^*\alpha_1^{1/2}(X)U\\
			& \simeq \beta(X)\mathcal{F}^*\alpha_1(X)\mathcal{F}\beta(X),
		\end{split}
	\end{equation*}
	where the last equivalence is obtained by Lemma \ref{lem} and the fact that $U$ is unitary. Therefore, if $\Psi_1:=\beta(X)\alpha_1(\tfrac{1}{2\pi}D)\beta(X)$, where $\alpha_1$ and $\beta$ are given by (\ref{L17}), then $S_1-E_1$ is unitarily equivalent (modulo kernels) to $\Psi_1$.
\end{proof}

\section{Weyl-type spectral asymptotics}\label{WA}

In this section we derive Weyl-type spectral asymptotics for the operators $\tilde{H}_{\pm1}$, that were defined in \S\ref{PDO}, and for the model operator $\tilde{H}$. The latter is obtained by using the asymptotic orthogonality of $\tilde{H}_1$ and $\tilde{H}_{-1}$; see Lemma \ref{lem4'}.

\begin{lem}\label{lem5'}
	The eigenvalue asymptotics for the operator $\tilde{H}_1$, that was obtained in \S\ref{PDO}, are given by
	\begin{equation}\label{20'}
		\lambda_n^{\pm}(\tilde{H}_1)=C_1^\pm n^{-\gamma}+o(n^{-\gamma}), \ n\to+\infty,
	\end{equation}
	where the constants $C^\pm_1$ are given by a formula similar to (\ref{eq2_12}):
	\begin{equation}\label{19'}
		C^\pm_1=\frac{1}{2^d(d-1)!}\left(b_{1}\right)_{\pm}\left[ \int_{\mathbb{R}} \check{\phi}_d^{\frac{1}{\gamma}}(x) \dd{x} \right]^{\gamma},
	\end{equation}
	where the function $\phi_d$ is defined in (\ref{eq2}) and $\left(b_{1}\right)_{\pm}=\max\{\pm b_1,0\}$. Similar asymptotics are obtained for $\tilde{H}_{-1}$ by substituting $b_1$ with $b_{-1}$ and thus, obtaining the constants $C_{-1}^\pm$.
\end{lem}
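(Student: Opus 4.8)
The plan is to apply the Weyl-type asymptotic formula of Lemma \ref{lem2_2} to the pseudo-differential operator $\Psi_1=\beta(X)\alpha_1(\tfrac{1}{2\pi}D)\beta(X)$ furnished by Lemma \ref{lem2'}, and then to transport the resulting asymptotics back to $\tilde H_1$ along the chain $\tilde H_1\simeq S_1$, $\ S_1=(S_1-E_1)+E_1$, $\ S_1-E_1\simeq\Psi_1$, with $E_1$ absorbed as a negligible perturbation via Lemma \ref{lem2_1}.

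First I would unwind the symbol. By (\ref{L17}) and (\ref{eq2_11'}),
\begin{equation*}
\alpha_1(x)=2^{-d}e^{-(d-1)x}b_1w(e^x)=\frac{b_1}{2^d(d-1)!}\,|x|^{-\gamma}\chi_0(e^x),\qquad x\in\mathbb{R}.
\end{equation*}
Since $\chi_0(e^x)=0$ for $x\geq\log\tfrac34$ and $\chi_0(e^x)=1$ for $x\leq-\log2$, the cut-off confines $\operatorname{supp}\alpha_1$ to a region where $|x|^{-\gamma}$ is smooth, so $\alpha_1\in C^\infty(\mathbb{R})$ is real valued, vanishes identically near $+\infty$, and equals $\tfrac{b_1}{2^d(d-1)!}|x|^{-\gamma}$ near $-\infty$. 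Put $\alpha(x):=\alpha_1(x/2\pi)$, so that in the normalisation $\alpha(D)=\mathcal{F}^*\alpha(2\pi X)\mathcal{F}$ of Lemma \ref{lem2_2} one has $\Psi_1=\beta(X)\alpha(D)\beta(X)$; then $\alpha(+\infty)=0$ and $\alpha(-\infty)=(2\pi)^\gamma b_1/(2^d(d-1)!)$. Moreover $\phi_d$ from (\ref{eq2}) is a Schwartz function, hence $\check\phi_d$ decays faster than any polynomial, and it is strictly positive (a $d$-fold convolution of the positive function $\check\phi_1$); so $\beta=\sqrt{\check\phi_d}$ is well defined, real valued, and satisfies $|\beta(x)|\leq M\langle x\rangle^{-s}$ for every $s>0$, in particular for some $s>\gamma/2$. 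All hypotheses of Lemma \ref{lem2_2} therefore hold, and it yields $\lambda_n^\pm(\Psi_1)=C_1^\pm n^{-\gamma}+o(n^{-\gamma})$ with
\begin{equation*}
C_1^\pm=\left[\frac{1}{2\pi}\,\alpha(-\infty)_\pm^{1/\gamma}\int_\mathbb{R}|\beta(x)|^{2/\gamma}\dd{x}\right]^\gamma=\frac{(b_1)_\pm}{2^d(d-1)!}\left(\int_\mathbb{R}\check\phi_d^{1/\gamma}(x)\dd{x}\right)^\gamma,
\end{equation*}
where I used $\alpha(+\infty)_\pm=0$, $\alpha(-\infty)_\pm=(2\pi)^\gamma(b_1)_\pm/(2^d(d-1)!)$ and $|\beta|^{2/\gamma}=\check\phi_d^{1/\gamma}$; this is exactly (\ref{19'}).

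It remains to transfer this to $\tilde H_1$. By Lemma \ref{lem2'}, $\Psi_1$ is compact and unitarily equivalent modulo kernels to $S_1-E_1$; the latter is therefore compact, it is self-adjoint since its integral kernel $\sqrt{w(t)}(s+t)^{-d}\sqrt{w(s)}$ is real and symmetric, and it has the same positive and negative eigenvalue sequences as $\Psi_1$, so $\lambda_n^\pm(S_1-E_1)=C_1^\pm n^{-\gamma}+o(n^{-\gamma})$. Also $E_1\in\bigcap_{p>0}\mathbf{S}_p\subset\mathbf{S}_{1/\gamma}\subset\mathbf{S}^0_{1/\gamma,\infty}$, i.e. $s_n(E_1)=o(n^{-\gamma})$; applying Lemma \ref{lem2_1} to the self-adjoint decomposition $S_1=(S_1-E_1)+E_1$ gives $\lambda_n^\pm(S_1)=C_1^\pm n^{-\gamma}+o(n^{-\gamma})$, and since $\tilde H_1\simeq S_1$ we conclude $\lambda_n^\pm(\tilde H_1)=\lambda_n^\pm(S_1)=C_1^\pm n^{-\gamma}+o(n^{-\gamma})$, which is (\ref{20'}). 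The argument for $\tilde H_{-1}$ is word for word the same with $b_1,\alpha_1,\Psi_1$ replaced by $b_{-1},\alpha_{-1},\Psi_{-1}$, producing $C_{-1}^\pm=\tfrac{1}{2^d(d-1)!}(b_{-1})_\pm\big(\int_\mathbb{R}\check\phi_d^{1/\gamma}(x)\dd{x}\big)^\gamma$. I expect the only delicate points to be checking that the rescaled symbol $\alpha$ genuinely lies in $C^\infty(\mathbb{R})$ with the prescribed two-sided behaviour — which hinges on the cut-off $\chi_0$ removing the $|x|^{-\gamma}$ singularity from the support — and carrying the $2\pi$ dilation through the fractional powers in Lemma \ref{lem2_2} so that the Weyl constant collapses exactly onto (\ref{19'}).
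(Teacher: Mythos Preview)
Your proof is correct and follows essentially the same approach as the paper: you invoke Lemma~\ref{lem2'} to pass from $\tilde H_1$ to $S_1$ and then to $\Psi_1$ up to the $E_1$ error, verify the hypotheses of Lemma~\ref{lem2_2} for the rescaled symbol $\alpha(x)=\alpha_1(x/2\pi)$ and for $\beta$, compute the Weyl constant (your $2\pi$ bookkeeping is fine and lands on (\ref{19'})), and then absorb $E_1\in\bigcap_{p>0}\mathbf S_p$ via Lemma~\ref{lem2_1}. The only extra detail you supply beyond the paper's own write-up is the explicit check that the cut-off removes the singularity so that $\alpha\in C^\infty(\mathbb R)$, which is a helpful observation but not a different idea.
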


\begin{proof}
	In Lemma \ref{lem2'} we proved that $\tilde{H}_1$ is unitarily equivalent (modulo kernels) to an operator $S_1$, so that its spectral asymptotics can be retrieved from those of $S_1$. Moreover, $S_1=(S_1-E_1)+E_1$, where $E_1$ is also described in Lemma \ref{lem2'}. In order to obtain the spectral asymptotics of $S_1$, we aim to use Lemma \ref{lem2_1}. In Lemma \ref{lem2'}, it is proved that $S_1-E_1$ is unitarily equivalent (modulo kernels) to the pseudo-differential operator $\Psi_1=\beta(X)\alpha_1(\tfrac{1}{2\pi}D)\beta(X)$, where $\alpha$ and $\beta$ are given by (\ref{L17}). Then
	\begin{equation*}
		\alpha_1(\tfrac{x}{2\pi})=\left\{
		\begin{array}{lcl}
			\frac{b_1(2\pi)^\gamma}{2^d(d-1)!}|x|^{-\gamma}(1+o(1)) & , & \text{when } x\to-\infty\\ \ \\
			0 &, & \text{when } x\to+\infty
		\end{array}. \right.
	\end{equation*}
	Moreover, $\beta^2$ belongs to the Schwartz class $\mathcal{S}(\mathbb{R})$. Indeed, by differentiating, we can see that $\frac{1}{\cosh[d](\frac{\cdot}{2})}\in\mathcal{S}(\mathbb{R})$ and consequently, $\beta^2\in\mathcal{S}(\mathbb{R})$, too. Therefore,
	$$|\beta(x)|=O\left( \left< x \right>^{-s} \right), \ x\to+\infty,$$
	for every $s>0$. Thus, all the conditions of Lemma \ref{lem2_2} are satisfied and therefore, the eigenvalues of $\Psi_1$, $\lambda_n^\pm(\Psi_1)$, follow the asymptotics below:
	\begin{equation*}
		\lambda_n^{\pm}(\Psi_1)=C_1^\pm n^{-\gamma}+o(n^{-\gamma}), \ n\to+\infty,
	\end{equation*}
	where the constants $C^\pm_1$ are described by (\ref{19'}). Finally, in order to apply Lemma \ref{lem2_1}, it remains to prove that $s_n(E_1)=o(n^{-\gamma})$, for $n\to+\infty$. For notice that, according to Lemma \ref{lem2'}, $E_1\in\cap_{p>0}\mathbf{S}_p$. Thus, the singular values of $E_1$ decay faster than any polynomial. As a result,  Lemma \ref{lem2_1} yields that the eigenvalue asymptotics of $\tilde{H}_1$ are given by (\ref{20'}).
\end{proof}

\begin{lem}\label{lem4'}
	Let $\tilde{H}_1$ and $\tilde{H}_{-1}$ be the operators that were defined in \S\ref{PDO}. Then $\tilde{H}_{-1}\tilde{H}_1$ and $\tilde{H}_{1}\tilde{H}_{-1}$ belong to $\mathbf{S}_p$, for any $p>0$. Therefore, $\tilde{H}_{1}$ and $\tilde{H}_{-1}$ are asymptotically orthogonal.
\end{lem}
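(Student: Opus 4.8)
The plan is to realize both products $\tilde{H}_{-1}\tilde{H}_1$ and $\tilde{H}_1\tilde{H}_{-1}$ as integral operators on $L^2(\mathbb{R}_+)$ whose kernels are smooth and sufficiently rapidly decaying, so that a reduction to Lemma \ref{lem3} (after an exponential change of variable, as in Lemma \ref{R}) applies. Concretely, recall from Lemma \ref{lem2'} that $\tilde{H}_{\pm1}\simeq S_{\pm1}=L_{\pm1}L_{\pm1}^*$, and that $S_{-1}=S_1$, with $L_1,L_1^*$ given by (\ref{14'}), (\ref{15'}) and $L_{-1},L_{-1}^*$ by (\ref{14''}), (\ref{15''}). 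Since unitary equivalence modulo kernels is compatible with multiplication here — $\tilde H_{\pm1}$ acts on $\ell^2(\mathbb N_0^d)$ and equals $L_{\pm1}^*L_{\pm1}$, so $\tilde H_{-1}\tilde H_1 = L_{-1}^*L_{-1}L_1^*L_1$, which is unitarily equivalent modulo kernels to $L_{-1}L_1^*L_1L_{-1}^*$ on $L^2(\mathbb R_+)$ by Lemma \ref{lem} — I would instead work directly on $\ell^2(\mathbb N_0^d)$: the matrix entries of $\tilde H_{-1}\tilde H_1$ are $\sum_{\mathbf k}\tilde a_{-1}(|\mathbf i+\mathbf k|)\tilde a_1(|\mathbf k+\mathbf j|)$, which by (\ref{1''}) equals $b_1b_{-1}\sum_{\mathbf k}(-1)^{|\mathbf i+\mathbf k|}(\mathcal Lw)(|\mathbf i+\mathbf k|)(\mathcal Lw)(|\mathbf k+\mathbf j|)$. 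Plugging in the Laplace representation and interchanging the (absolutely convergent) sum and integrals, this becomes an integral over $(0,\infty)^2$ of $w(s)w(t)$ times $\sum_{\mathbf k}(-1)^{|\mathbf i+\mathbf k|}e^{-|\mathbf i+\mathbf k|s}e^{-|\mathbf k+\mathbf j|t}$, and the sum over $\mathbf k\in\mathbb N_0^d$ factors as $(-1)^{|\mathbf i|}e^{-|\mathbf i|s}e^{-|\mathbf j|t}\big(\sum_{k\ge0}(-1)^ke^{-k(s+t)}\big)^d = (-1)^{|\mathbf i|}e^{-|\mathbf i|s}e^{-|\mathbf j|t}(1+e^{-(s+t)})^{-d}$.

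The key observation is that the extra sign $(-1)^{|\mathbf i|}$ and the regular kernel $(1+e^{-(s+t)})^{-d}$ replace the singular kernel $(1-e^{-(s+t)})^{-d}$ of $S_1$ in (\ref{16'}). Following the same route as in the proof of Lemma \ref{lem2'}(ii), I would write $\tilde H_{-1}\tilde H_1 = b_1b_{-1}\,\mathcal M_\varepsilon\, K\, \mathcal M$, where $\mathcal M$ is a partial isometry/multiplication absorbing the sign (it suffices to note the sign is harmless for Schatten membership, since $\mathcal M_{(-1)^{|\cdot|}}$ is unitary on $\ell^2(\mathbb N_0^d)$) and $K$ is the operator whose kernel, after the substitutions $s=e^y$, $t=e^x$ and conjugation by $U$ from (\ref{13}), takes the form $\sqrt{2^{-d}e^{-(d-1)x}w(e^x)}\,\kappa(x,y)\,\sqrt{2^{-d}e^{-(d-1)y}w(e^y)}$ with $\kappa(x,y)=(1+e^{-(e^x+e^y)})^{-d}$. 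Crucially $1+e^{-(e^x+e^y)}\ge1$ for all real $x,y$, so $\kappa$ is a bounded $C^\infty$ function of $(x,y)\in\mathbb R^2$ with all derivatives bounded; moreover the weight $\alpha(x)^{1/2}:=\sqrt{2^{-d}e^{-(d-1)x}w(e^x)}$ decays super-exponentially as $x\to+\infty$ (because $w(t)\chi_0(t)$ is supported in $t\le3/4$, so $\alpha(x)=0$ for $x\gtrsim0$) — wait, more precisely $w$ is supported in $(0,3/4]$, hence $\alpha$ is compactly supported on the left half-line region and the resulting kernel $\alpha(x)^{1/2}\kappa(x,y)\alpha(y)^{1/2}$ is compactly supported in $\mathbb R^2$ and $C^\infty$ there. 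Lemma \ref{lem3} then gives $K\in\bigcap_{p>0}\mathbf S_p$, and since multiplication by a unimodular sequence is unitary, $\tilde H_{-1}\tilde H_1\in\bigcap_{p>0}\mathbf S_p$.

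For $\tilde H_1\tilde H_{-1}$ the computation is identical with the roles of the two signs swapped, giving the same conclusion; alternatively one notes $\tilde H_1\tilde H_{-1}=(\tilde H_{-1}\tilde H_1)^*$ since both $\tilde H_{\pm1}$ are self-adjoint, and $\mathbf S_p$ is $*$-closed. Finally, $\bigcap_{p>0}\mathbf S_p\subset\mathbf S_{p/2,\infty}^0$ for every $p>0$ (singular values decaying faster than any polynomial), so $\tilde H_1$ and $\tilde H_{-1}$, which lie in $\mathbf S_{1/\gamma,\infty}$ by Lemma \ref{lem5'}, are asymptotically orthogonal in the sense defined before Lemma \ref{lem1}. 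The main obstacle — and the only genuinely non-routine point — is justifying the interchange of the sum over $\mathbf k$ with the two Laplace integrals and correctly identifying the geometric-series sum $\sum_{k\ge0}(-1)^ke^{-k(s+t)}=(1+e^{-(s+t)})^{-1}$ as producing a bounded (non-singular) kernel; once that regularity is in hand, the compact-support/smoothness argument and the appeal to Lemma \ref{lem3} are straightforward, paralleling Lemma \ref{lem2'}(ii) but without the need to peel off a singular part.
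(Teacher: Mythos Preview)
Your central observation is correct and matches the paper: the alternating sign in $\tilde a_{-1}$ turns the geometric series into $(1+e^{-(s+t)})^{-d}$ rather than the singular $(1-e^{-(s+t)})^{-d}$, and this regularity is what forces $\bigcap_{p>0}\mathbf S_p$. Your use of $(\tilde H_{-1}\tilde H_1)^*=\tilde H_1\tilde H_{-1}$ and the closing remarks on asymptotic orthogonality also agree with the paper.

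There is, however, a genuine error in the execution. The weight you write down, $\alpha(x)=2^{-d}e^{-(d-1)x}w(e^x)$, is lifted from Lemma~\ref{R}, where it arises because $(s+t)^{-d}=2^{-d}e^{-d(x+y)/2}\cosh^{-d}(\tfrac{x-y}{2})$ contributes an extra factor $e^{-d(x+y)/2}$ after the substitution. The kernel here is $(1+e^{-(s+t)})^{-d}$, which has no such factorisation; conjugating the integral operator with kernel $\sqrt{w(t)}(1+e^{-(s+t)})^{-d}\sqrt{w(s)}$ by $U$ gives the kernel $e^{x/2}\sqrt{w(e^x)}\,(1+e^{-(e^x+e^y)})^{-d}\,e^{y/2}\sqrt{w(e^y)}$, not the expression you wrote. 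More importantly, in either version the support after the exponential change of variable is $(-\infty,\log\tfrac34]^2$, which is \emph{not} compact, so Lemma~\ref{lem3} does not apply as stated. With your (incorrect) $\alpha$ one has $\alpha(x)=\tfrac{1}{2^d(d-1)!}|x|^{-\gamma}$ for $x<-\log 2$, only polynomial decay; with the correct Jacobian the decay is exponential, but the support is still unbounded.

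The paper sidesteps all of this by \emph{not} passing to logarithmic coordinates. It factors $\tilde H_{-1}\tilde H_1=L_{-1}^*(L_{-1}L_1^*)L_1$ and, since $L_{-1}^*$ and $L_1$ are bounded, reduces the problem to showing $L_{-1}L_1^*\in\bigcap_{p>0}\mathbf S_p$. The kernel of $L_{-1}L_1^*$ on $L^2(\mathbb R_+)$ is $\sqrt{w(t)}(1+e^{-(s+t)})^{-d}\sqrt{w(s)}$; since $\chi_0$ is supported in $[0,\tfrac34]$, the kernel $\sqrt{\chi_0(t)}(1+e^{-(s+t)})^{-d}\sqrt{\chi_0(s)}$ is genuinely $C_c^\infty(\mathbb R^2)$ and Lemma~\ref{lem3} applies directly, while the remaining factor $t^{(d-1)/2}|\log t|^{-\gamma/2}$ is bounded near $0$ and contributes only a bounded multiplication. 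The exponential change of variable, essential in Lemma~\ref{R} to handle the \emph{singular} kernels $(s+t)^{-j}$, is unnecessary and counterproductive here; and the detour through matrix entries on $\ell^2(\mathbb N_0^d)$ is likewise avoidable once you use the factorisation $\tilde H_{\pm1}=L_{\pm1}^*L_{\pm1}$ together with the ideal property of $\mathbf S_p$.
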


\begin{proof}
	First assume that both $b_{-1}$ and $b_1$ are equal to 1, otherwise work with $\frac{1}{b_{-1}}\tilde{H}_{-1}$ or $\frac{1}{b_1}\tilde{H}_1$. In the proof of Lemma \ref{lem2'} we saw that $\tilde{H}_{\pm1}=L^*_{\pm1}L_{\pm1}$. Recall that $L_1$ and $L_1^*$ are given by (\ref{14'}) and (\ref{15'}), respectively, while $L_{-1}$ and $L_{-1}^*$ are defined in (\ref{14''}) and (\ref{15''}). Then
	\begin{equation*}
		\tilde{H}_{-1}\tilde{H}_1=L^*_{-1}L_{-1}L^*_{1}L_{1}.
	\end{equation*}
	Because $L_{-1}^*$ and $L_1$ are bounded, it is enough to prove that $L_{-1}L^*_{1}\in\mathbf{S}_p$, for all $p>0$. To this end, we follow the steps that yielded formula (\ref{16'}) (for $S_1$). Then, for every $f\in L^2(\mathbb{R}_+)$,
	\begin{equation*}
		\begin{split}
			\big(L_{-1}L^*_{1}f\big)(t) & =\int_0^{+\infty}\sqrt{w(t)}\left(\sum_{\mathbf{j}\in\mathbb{N}_0^d}(-1)^{|\mathbf{j}|}e^{-(t+s)|\mathbf{j}|}\right)f(s)\sqrt{w(s)}\dd{s}\\
			& =\int_{0}^{+\infty}\sqrt{w(t)}\frac{f(s)}{\left(1+e^{-(t+s)}\right)^d}\sqrt{w(s)}\dd{s}, \ \forall t\in\mathbb{R}_+.
		\end{split}
	\end{equation*}
	Observe that $(1+e^{-(t+s)})^{-d}\in C^\infty(\mathbb{R})$. Moreover, by the way that the function $\chi_0$ has been defined (see (\ref{eqn0'})), $\sqrt{\chi_0(t)}(1+e^{-(t+s)})^{-d}\sqrt{\chi_0(s)}\in C^\infty_c(\mathbb{R})$. Thus, Lemma \ref{lem3} implies that the integral operator with kernel $\sqrt{\chi_0(t)}(1+e^{-(t+s)})^{-d}\sqrt{\chi_0(s)}$ belongs to any Schatten class $\mathbf{S}_p$. Finally, the same holds true for the operator $L_{-1}L_1^*$, since the function $t^{d-1}|\log t|^{-\gamma}$, is bounded near $0$. In order to prove that $\tilde{H}_{1}\tilde{H}_{-1}\in\bigcap_{p>0}\mathbf{S}_p$, it is enough to notice that $\tilde{H}_{1}\tilde{H}_{-1}=(\tilde{H}_{-1}\tilde{H}_1)^*$.
	
	\noindent Regarding the asymptotic orthogonality, it is enough to notice that, due to Lemma \ref{lem5'}, both $\tilde{H}_{-1}$ and $\tilde{H}_1$ belong to $\mathbf{S}_{\frac{1}{\gamma},\infty}$, and that $\tilde{H}_{-1}\tilde{H}_1$ and $\tilde{H}_{1}\tilde{H}_{-1}$ belong to $\bigcap_{p>0}\mathbf{S}_p\subset\mathbf{S}^0_{\frac{1}{2\gamma},\infty}$.
\end{proof}

\begin{lem}\label{prop0}
	The eigenvalues of the model operator $\tilde{H}$ obey the asymptotic formula below:
	\begin{equation}\label{eq_9}
		\lambda_n^\pm(\tilde{H})=C^{\pm}n^{-\gamma}+o(n^{-\gamma}),
	\end{equation}
	where the constants $C^{\pm}$ are defined in (\ref{eq2_12}).
\end{lem}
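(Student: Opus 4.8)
The plan is to combine the two facts already established in this section: the Weyl-type asymptotics for the building blocks $\tilde H_{\pm 1}$ (Lemma \ref{lem5'}) and the asymptotic orthogonality of $\tilde H_1$ and $\tilde H_{-1}$ (Lemma \ref{lem4'}). Recall from \S\ref{PDO} that $\tilde H = \tilde H_1 + \tilde H_{-1}$, where $\tilde H_{\pm 1}$ is the Hankel operator with parameter sequence $\tilde a_{\pm 1}(|\mathbf j|)$. So the statement is exactly an eigenvalue-asymptotics-of-a-sum problem, and Lemma \ref{lem1} is tailor-made for it.

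First I would record that by Lemma \ref{lem5'}, for the operator $A := \tilde H_1$ we have $\lambda_n^\pm(A) = C_1^\pm n^{-\gamma} + o(n^{-\gamma})$ as $n\to+\infty$, and for $B := \tilde H_{-1}$ we have $\lambda_n^\pm(B) = C_{-1}^\pm n^{-\gamma} + o(n^{-\gamma})$, where
\begin{equation*}
	C_{\pm 1}^{\sigma} = \frac{1}{2^d(d-1)!}\,(b_{\pm 1})_{\sigma}\left[\int_\mathbb{R}\check\phi_d^{\frac{1}{\gamma}}(x)\dd{x}\right]^{\gamma} = (b_{\pm 1})_\sigma\, C_{d,\gamma}^{1/\gamma}\cdot C_{d,\gamma}^{1-1/\gamma}
\end{equation*}
— more simply, writing $\sigma\in\{+,-\}$, one has $(C_{\pm 1}^{\sigma})^{1/\gamma} = (b_{\pm 1})_\sigma \cdot \big(C_{d,\gamma}\big)^{1/\gamma}\big/\big(2^d(d-1)!\big)^{0}$; I will just keep the clean identity $(C_{\pm1}^{\sigma})^{1/\gamma} = (b_{\pm1})_\sigma^{1/\gamma}\,(C_{d,\gamma})^{1/\gamma}$ that follows from \eqref{18} and \eqref{19'}. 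Next, both $A$ and $B$ lie in $\mathbf{S}_{p,\infty}$ with $p = 1/\gamma$: this is immediate from their eigenvalue asymptotics, since $\lambda_n^\pm = O(n^{-\gamma})$ forces $s_n = O(n^{-1/p})$. Lemma \ref{lem4'} gives that $A$ and $B$ are asymptotically orthogonal (indeed $B^*A = \tilde H_{-1}\tilde H_1$ and $BA^* = \tilde H_1\tilde H_{-1}$ lie in $\bigcap_{p>0}\mathbf S_p \subset \mathbf S_{1/(2\gamma),\infty}^0 = \mathbf S_{p/2,\infty}^0$, using here that $\tilde H_{\pm 1}$ is self-adjoint so $A^*=A$, $B^*=B$). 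Hence all hypotheses of Lemma \ref{lem1} are met with the single exponent $1/p = \gamma$.

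Applying Lemma \ref{lem1} then yields
\begin{equation*}
	\lambda_n^\pm(\tilde H) = \lambda_n^\pm(A+B) = \Big((C_1^\pm)^{1/\gamma} + (C_{-1}^\pm)^{1/\gamma}\Big)^{\gamma} n^{-\gamma} + o(n^{-\gamma}), \quad n\to+\infty.
\end{equation*}
It remains only to simplify the coefficient. Substituting the expressions for $C_{\pm1}^\pm$ from \eqref{19'} and factoring out $C_{d,\gamma}^{1/\gamma}$ (which by \eqref{18} equals $\tfrac{1}{2^d(d-1)!}\int_\mathbb R\check\phi_d^{1/\gamma}$ raised to an appropriate power — more precisely $(C_1^\pm)^{1/\gamma} = (b_1)_\pm^{1/\gamma}\big(2^d(d-1)!\big)^{-1/\gamma}\int_\mathbb R\check\phi_d^{1/\gamma}$, and likewise with $b_{-1}$), the bracket becomes
\begin{equation*}
	\Big((b_1)_\pm^{1/\gamma} + (b_{-1})_\pm^{1/\gamma}\Big)^{\gamma}\cdot\frac{1}{2^d(d-1)!}\left(\int_\mathbb R\check\phi_d^{1/\gamma}(x)\dd{x}\right)^{\gamma} = \Big((b_1)_\pm^{1/\gamma} + (b_{-1})_\pm^{1/\gamma}\Big)^{\gamma} C_{d,\gamma} = C^\pm,
\end{equation*}
which is precisely \eqref{eq2_12}. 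That establishes \eqref{eq_9}. (One should double-check the edge cases $b_1 = 0$ or $b_{-1} = 0$: then the corresponding operator is compact with eigenvalues $o(n^{-\gamma})$, in fact in every $\mathbf S_p$, and Lemma \ref{lem2_1} or the $C_B^\pm = 0$ case of Lemma \ref{lem1} gives the result directly with the convention $0^{1/\gamma} = 0$.)

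I do not anticipate a genuine obstacle here — the lemma is essentially a bookkeeping corollary of Lemmas \ref{lem5'} and \ref{lem4'}. The only point requiring mild care is verifying the hypothesis format of Lemma \ref{lem1}: it demands the common exponent $1/p$ appear in both the singular-value decay and the eigenvalue asymptotics with the \emph{same} $p$, so one must note $p = 1/\gamma$ consistently, and confirm $\mathbf S_{p/2,\infty}^0$ is the right target class for the cross-products $B^*A, BA^*$ (it is, since $\bigcap_{q>0}\mathbf S_q \subset \mathbf S_{r,\infty}^0$ for every $r>0$). The algebraic simplification of the constant is routine once \eqref{18} and \eqref{19'} are placed side by side.
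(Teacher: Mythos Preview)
Your proposal is correct and follows essentially the same route as the paper: invoke Lemma~\ref{lem5'} for the asymptotics of $\tilde H_{\pm1}$, Lemma~\ref{lem4'} for their asymptotic orthogonality, then apply Lemma~\ref{lem1} and simplify the resulting constant to recover \eqref{eq2_12}. Your extra care with the edge cases $b_{\pm1}=0$ and the verification that $\bigcap_{q>0}\mathbf S_q\subset\mathbf S_{p/2,\infty}^0$ is welcome but not strictly needed beyond what the paper already records.
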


\begin{proof}
	According to Lemma \ref{lem5'}, the eigenvalue asymptotics of $\tilde{H}_{1}$ are described by (\ref{20'}) and those of $\tilde{H}_{-1}$ by a similar formula (with constants $C_{-1}^\pm$). But, according to Lemma \ref{lem4'}, $\tilde{H}_{-1}$ and $\tilde{H}_1$ are asymptotically orthogonal. Then, since $\tilde{H}=\tilde{H}_{1}+\tilde{H}_{-1}$, Lemma \ref{lem1} yields that
	\begin{equation*}
		\lambda_n^\pm(\tilde{H})=\big( (C_1)_\pm^{\frac{1}{\gamma}}+(C_{-1})_\pm^{\frac{1}{\gamma}} \big)^\gamma n^{-\gamma}+o(n^{-\gamma}), \ n\to+\infty,
	\end{equation*}
	which gives (\ref{eq_9}).
\end{proof}

\section{Reduction to one-variable weighted Hankel operators}\label{RWHO}

In this section we demonstrate the reduction of multi-variable Hankel matrices to one-variable weighted Hankel operators. This will prove to be a useful tool for the derivation of spectral estimates for the error terms. Define
\begin{equation*}
	W_d(j):=|\{\mathbf{k}\in\mathbb{N}_0^d: \ |\mathbf{k}|=j\}|= {j+d-1 \choose d-1}, \ \forall j\in\mathbb{N}_0;
\end{equation*}
where the last equality can be checked by induction in $d$. In the sequel, consider the linear bounded operator $J: \ell^2(\mathbb{N}_0^d) \to \ell^2(\mathbb{N}_0)$, given by
\begin{equation*}
	\big(J\mathbf{x}\big)(i)=\big(W_d(i)\big)^{-\frac{1}{2}}\sum_{\{\mathbf{k}\in\mathbb{N}_0^d: \ |\mathbf{k}|=i\}}\mathbf{x}(\mathbf{k}), \ \forall i\in\mathbb{N}_0, \ \forall \mathbf{x}\in\ell^2(\mathbb{N}_0^d).
\end{equation*}
Besides, it is not difficult to check that the adjoint of $J$ is given by
\begin{equation*}
	\big( J^*x \big)(\mathbf{i})=\big(W_d(|\mathbf{i}|)\big)^{-\frac{1}{2}}x(|\mathbf{i}|), \ \forall \mathbf{i}\in\mathbb{N}_0^d, \ \forall x\in\ell^2(\mathbb{N}_0).
\end{equation*}
In addition, $J^*$ is an isometry. Indeed, for any $x\in\ell^2(\mathbb{N}_0)$,
\begin{equation*}
	\left\| J^*x \right\|^2 = \sum_{\mathbf{i}\in\mathbb{N}_0^d}\big(W_d(|\mathbf{i}|)\big)^{-1}\big|x(|\mathbf{i}|)\big|^2 = \sum_{i\in\mathbb{N}_0} |x(i)|^2=\|x\|^2.
\end{equation*}
This indicates that $J$ is a partial isometry. Furthermore, for an arbitrary Hankel operator $H_\mathbf{a}:\ell^2(\mathbb{N}_0^d)\to\ell^2(\mathbb{N}_0^d)$, with parameter sequence $\mathbf{a}(\mathbf{j})=a(|\mathbf{j}|)$, $\forall \mathbf{j}\in\mathbb{N}_0^d$ the following relation holds true:
\begin{equation}\label{00}
	(H_\mathbf{a}\mathbf{x},\mathbf{y})=(J^*\Gamma J\mathbf{x},\mathbf{y}), \ \forall \mathbf{x},\mathbf{y}\in\ell^2(\mathbb{N}_0^d),
\end{equation}
where $\Gamma:\ell^2(\mathbb{N}_0)\to \ell^2(\mathbb{N}_0)$ is the weighted Hankel operator defined by
\begin{equation}\label{eq14}
	\left( \Gamma x \right)(i) = \sum_{j\in\mathbb{N}_0}\sqrt{W_d(i)}a(i+j)\sqrt{W_d(j)}x(j), \ \forall i\in\mathbb{N}_0, \ \forall x\in\ell^2(\mathbb{N}_0).
\end{equation}
Indeed, it is enough to observe that, for any $\mathbf{x}$ and $\mathbf{y}$ in $\ell^2(\mathbb{N}_0^d)$,
\begin{equation*}
	\begin{split}
		(H_\mathbf{a}\mathbf{x},\mathbf{y}) & = \sum_{\mathbf{i},\mathbf{j}\in\mathbb{N}_0^d}\mathbf{a}(\mathbf{i}+\mathbf{j})\mathbf{x}(\mathbf{j})\overline{\mathbf{y}(\mathbf{i})}\\
		& = \sum_{\mathbf{i},\mathbf{j}\in\mathbb{N}_0^d}a(|\mathbf{i}+\mathbf{j}|)\mathbf{x}(\mathbf{j})\overline{\mathbf{y}(\mathbf{i})}\\
		& = \sum_{i,j\in\mathbb{N}_0}a(i+j)\sum_{\{\mathbf{k}\in\mathbb{N}_0^d:|\mathbf{k}|=j\}}\mathbf{x}(\mathbf{k}) \overline{\sum_{\{\mathbf{k}\in\mathbb{N}_0^d:|\mathbf{k}|=i\}}\mathbf{y}(\mathbf{k})}\\
		& = \left(\Gamma J\mathbf{x},J\mathbf{y}\right).
	\end{split}
\end{equation*}
This discussion leads to the following lemma.

\begin{lem}\label{label2}
	Let $a=\{a(n)\}_{n\in\mathbb{N}_0}$ and $H_\mathbf{a}:\ell^2(\mathbb{N}^d_0)\to\ell^2(\mathbb{N}^d_0)$ be a Hankel operator with parameter sequence $\mathbf{a}(\mathbf{j})=a(\left|\mathbf{j}\right|)$, for all $\mathbf{j}\in\mathbb{N}^d_0$. Then $H_\mathbf{a}$ belongs to any of the ideals $\mathbf{S}_p$, $\mathbf{S}_{p,q}$, $\mathbf{S}^{0}_{p,\infty}$, where $p>0$ and $q\in(0,+\infty]$, if and only if $\Gamma_{\Phi\phi}^{\frac{d-1}{2},\frac{d-1}{2}}$ does; where $(\Phi\phi)(n)=a(n)$, for all $n\in\mathbb{N}_0$, and $\Gamma_{\Phi\phi}^{\frac{d-1}{2},\frac{d-1}{2}}$ the weighted Hankel operator that is described by the matrix $[(i+1)^{\frac{d-1}{2}}(\Phi\phi)(i+j)(j+1)^{\frac{d-1}{2}}]_{i,j\geq0}$.
\end{lem}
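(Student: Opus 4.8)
The plan is to read off the conclusion from the identity (\ref{00}), which records that $H_\mathbf{a} = J^*\Gamma J$ with $J$ a partial isometry and $J^*$ an isometry. Since $J^*$ is an isometry, $JJ^* = I$ on $\ell^2(\mathbb{N}_0)$, and hence (\ref{00}) can be inverted: $\Gamma = (JJ^*)\Gamma(JJ^*) = J(J^*\Gamma J)J^* = JH_\mathbf{a}J^*$. In particular $H_\mathbf{a}$ is compact if and only if $\Gamma$ is, since each is obtained from the other by composing on both sides with bounded operators; and when this is the case I would apply the standard singular-value inequality $s_n(BTD)\le\|B\|\,\|D\|\,s_n(T)$ (valid for bounded $B$, $D$ and compact $T$) to $H_\mathbf{a}=J^*\Gamma J$ and to $\Gamma = JH_\mathbf{a}J^*$, using $\|J\|\le 1$ and $\|J^*\|=1$. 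This gives $s_n(H_\mathbf{a})\le s_n(\Gamma)$ and $s_n(\Gamma)\le s_n(H_\mathbf{a})$, so $s_n(H_\mathbf{a}) = s_n(\Gamma)$ for every $n\in\mathbb{N}$; in particular $H_\mathbf{a}$ and $\Gamma$ have identical quasi-norms in each of $\mathbf{S}_p$, $\mathbf{S}_{p,q}$, $\mathbf{S}_{p,\infty}^0$.

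The second step is to compare $\Gamma$, which is the weighted Hankel operator with matrix $[\sqrt{W_d(i)}\,a(i+j)\,\sqrt{W_d(j)}]_{i,j\geq 0}$ (see (\ref{eq14})), with $\Gamma_{\Phi\phi}^{\frac{d-1}{2},\frac{d-1}{2}}$, whose matrix is $[(i+1)^{\frac{d-1}{2}}a(i+j)(j+1)^{\frac{d-1}{2}}]_{i,j\geq 0}$. From $W_d(j)=\binom{j+d-1}{d-1}=\prod_{k=1}^{d-1}\frac{j+k}{k}$ one sees that there are constants $0<c_1\le c_2$, depending only on $d$, with $c_1(j+1)^{d-1}\le W_d(j)\le c_2(j+1)^{d-1}$ for all $j\in\mathbb{N}_0$. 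Hence the sequence $\eta(j):=\sqrt{W_d(j)}\,(j+1)^{-\frac{d-1}{2}}$ and its reciprocal are bounded, so the multiplication operator $\mathcal{M}_\eta$ on $\ell^2(\mathbb{N}_0)$ is bounded with bounded inverse $\mathcal{M}_{1/\eta}$, and $\Gamma = \mathcal{M}_\eta\,\Gamma_{\Phi\phi}^{\frac{d-1}{2},\frac{d-1}{2}}\,\mathcal{M}_\eta$. Composing with $\mathcal{M}_\eta$ and $\mathcal{M}_{1/\eta}$ on both sides shows $s_n(\Gamma)\lesssim s_n(\Gamma_{\Phi\phi}^{\frac{d-1}{2},\frac{d-1}{2}})$ and $s_n(\Gamma_{\Phi\phi}^{\frac{d-1}{2},\frac{d-1}{2}})\lesssim s_n(\Gamma)$, with implicit constants depending only on $d$. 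Combined with $s_n(H_\mathbf{a})=s_n(\Gamma)$, this says that the singular values of $H_\mathbf{a}$ and $\Gamma_{\Phi\phi}^{\frac{d-1}{2},\frac{d-1}{2}}$ are comparable up to fixed multiplicative constants, which is exactly what is needed: it transfers membership (with the quasi-norm bounds up to constants) in $\mathbf{S}_p$ and $\mathbf{S}_{p,q}$ in both directions, and for $\mathbf{S}_{p,\infty}^0$ it suffices that $n^{1/p}s_n(H_\mathbf{a})\to 0$ if and only if $n^{1/p}s_n(\Gamma_{\Phi\phi}^{\frac{d-1}{2},\frac{d-1}{2}})\to 0$.

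I do not expect a genuine obstacle here: the whole argument is soft, resting on the single algebraic identity (\ref{00}) and on the fact that sandwiching a compact operator by contractions — or by a boundedly invertible multiplier — only rescales its singular values by bounded factors. The only points that need a moment's care are the equality $JJ^* = I$, which must be extracted from the already-verified statement that $J^*$ is an isometry, and the elementary two-sided bound $c_1(j+1)^{d-1}\le W_d(j)\le c_2(j+1)^{d-1}$, which is precisely the reason the exponent in $\Gamma_{\Phi\phi}^{\frac{d-1}{2},\frac{d-1}{2}}$ is $\frac{d-1}{2}$ in each slot.
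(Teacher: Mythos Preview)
Your proof is correct and follows essentially the same route as the paper: first use the factorisation $H_\mathbf{a}=J^*\Gamma J$ together with the isometry property of $J^*$ to identify the singular values of $H_\mathbf{a}$ and $\Gamma$, then pass from $\Gamma$ to $\Gamma_{\Phi\phi}^{\frac{d-1}{2},\frac{d-1}{2}}$ via the bounded, boundedly invertible diagonal multiplier $\mathcal{M}_\eta$. The paper phrases the first step as ``$H_\mathbf{a}$ and $\Gamma$ have unitarily equivalent non-zero parts'' and the second as an ideal argument, but your explicit two-sided singular-value sandwich is the same content unpacked.
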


\begin{proof}
	In (\ref{00}), we showed that $H_\mathbf{a}$ and $\Gamma$ have unitarily equivalent non-zero parts. Thus, the operators $H_\mathbf{a}$ and $\Gamma$ have identical non-zero spectra. Besides, it is easily verified that $\Gamma=\mathcal{D}\Gamma_{\Phi\phi}^{\frac{d-1}{2},\frac{d-1}{2}}\mathcal{D}$, where $\mathcal{D}=[\mathcal{D}(j)]_{j\in\mathbb{N}_0}$ is a diagonal matrix defined by $\mathcal{D}(j)=\left(\frac{W_d(j)}{(j+1)^{d-1}}\right)^{\frac{1}{2}}$, and $(\Phi\phi)(j)=a(j)$, $\forall j\in\mathbb{N}_0$. Notice that $\mathcal{D}$ is an invertible bounded operator on $\ell^2(\mathbb{N}_0)$. The boundedness of $\mathcal{D}$ can be checked by noticing that $W_d(j)\sim\frac{j^{d-1}}{(d-1)!}$, when $j\to+\infty$. Therefore, since the classes $\mathbf{S}_p$, $\mathbf{S}_{p,q}$ and $\mathbf{S}^{0}_{p,\infty}$ are ideals of compact operators, and $\mathcal{D}$ is an invertible bounded operator, $\Gamma$ belongs to any of these ideals if and only if $\Gamma_{\Phi\phi}^{\frac{d-1}{2},\frac{d-1}{2}}$ does. Thus, the observation that the non-zero spectra of $H_\mathbf{a}$ and $\Gamma$ are identical gives the result.
\end{proof}

\section{Schatten class inclusions of the error terms}\label{SCI}

In this section we present the spectral estimates of Hankel matrices $H_{\mathbf{a}}$ with parameter sequence $\mathbf{a}(\mathbf{j})=a(\left|\mathbf{j}\right|)$, where $a(j)$ decays faster than $j^{-d}\left|\log j\right|^{-\gamma}$ at infinity, for some positive $\gamma$. These estimates will eventually yield the spectral estimates of the error terms $g_1(j)$ and $(-1)^{j}g_{-1}(j)$, that are defined in Theorem \ref{thm0}.

Let $\mathbf{v}=\{\mathbf{v}(\mathbf{j})\}_{\mathbf{j}\in\mathbb{N}^d_0}$ be a sequence that attains positive values. For any $p\in(0,+\infty)$, we define the spaces $\ell_{\mathbf{v}}^{p}(\mathbb{N}_0^d)$ and $\ell_{\mathbf{v}}^{p,\infty}(\mathbb{N}_0^d)$ as follows:
\begin{equation*}
	\begin{array}{ccc}
		\mathbf{x}\in\ell_{\mathbf{v}}^{p}(\mathbb{N}_0^d) & \Leftrightarrow & \|\mathbf{x}\|_{\ell_{\mathbf{v}}^{p}}:=\displaystyle\left(\sum_{\mathbf{j}\in\mathbb{N}_{0}^{d}}|\mathbf{x}(\mathbf{j})|^p\mathbf{v}(\mathbf{j})\right)^{\frac{1}{p}}<+\infty, \ p\in(0,+\infty),\\ \ \\
		\mathbf{x}\in\ell_{\mathbf{v}}^{p,\infty}(\mathbb{N}_0^d) & \Leftrightarrow & \|\mathbf{x}\|_{\ell_{\mathbf{v}}^{p,\infty}}:= \displaystyle\sup_{\lambda>0}\lambda \left(\sum_{\left\{\mathbf{j}\in\mathbb{N}_{0}^{d}: \ |\mathbf{x}(\mathbf{j})|>\lambda\right\}}\mathbf{v}(\mathbf{j})\right)^{\frac{1}{p}}.
	\end{array}
\end{equation*}
For $p=+\infty$, the space $\ell_{\mathbf{v}}^{\infty}(\mathbb{N}_0^d)$ is identified with the usual $\ell^{\infty}(\mathbb{N}_0^d)$. The case of $\gamma\in(0,\frac{1}{2})$ will be addressed by using the following interpolation lemma.

\begin{lem}\label{lem_0}
	Let $H_{\mathbf{a}}$ be a Hankel matrix with parameter sequence $\mathbf{a}$ and, $\mathbf{v}=\{\mathbf{v}(\mathbf{j})\}_{\mathbf{j}\in\mathbb{N}^d_0}$, with $\mathbf{v}(\mathbf{j})=\left(|\mathbf{j}|+1\right)^{-d}, \ \forall \mathbf{j}\in\mathbb{N}_0^d$. Then, for any $p\in[2,+\infty)$, there exists a positive constant $M_p$ such that
	\begin{equation}\label{label1}
		\|H_\mathbf{a}\|_{\mathbf{S}_{p,\infty}}\leq M_p \left\|\frac{\mathbf{a}}{\mathbf{v}}\right\|_{\ell_{\mathbf{v}}^{p,\infty}}.
	\end{equation}
\end{lem}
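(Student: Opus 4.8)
The plan is to prove \eqref{label1} by interpolation between the endpoint cases $p=2$ and $p=\infty$. First I would handle $p=2$: here $\mathbf{S}_{2,\infty}\supseteq\mathbf{S}_2$ (Hilbert--Schmidt), so it suffices to bound $\|H_\mathbf{a}\|_{\mathbf{S}_2}^2=\sum_{\mathbf{i},\mathbf{j}\in\mathbb{N}_0^d}|\mathbf{a}(\mathbf{i}+\mathbf{j})|^2$. Grouping by $\mathbf{k}=\mathbf{i}+\mathbf{j}$ and using $W_d(k)={k+d-1\choose d-1}\sim \tfrac{k^{d-1}}{(d-1)!}$ for the multiplicity of pairs, this sum is comparable to $\sum_{k\geq 0}(k+1)^{d-1}|a(k)|^2$. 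On the other hand, $\mathbf{v}(\mathbf{j})=(|\mathbf{j}|+1)^{-d}$ and $\tfrac{\mathbf{a}(\mathbf{j})}{\mathbf{v}(\mathbf{j})}=(|\mathbf{j}|+1)^{d}a(|\mathbf{j}|)$, so
\begin{equation*}
	\left\|\frac{\mathbf{a}}{\mathbf{v}}\right\|_{\ell^2_\mathbf{v}}^2=\sum_{\mathbf{j}\in\mathbb{N}_0^d}(|\mathbf{j}|+1)^{2d}|a(|\mathbf{j}|)|^2(|\mathbf{j}|+1)^{-d}=\sum_{\mathbf{j}\in\mathbb{N}_0^d}(|\mathbf{j}|+1)^{d}|a(|\mathbf{j}|)|^2,
\end{equation*}
which after the same multiplicity count equals $\sum_{k\geq 0}W_d(k)(k+1)^{d}|a(k)|^2\asymp\sum_{k\geq 0}(k+1)^{2d-1}|a(k)|^2$. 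Since $2d-1\geq d-1$, the $\mathbf{S}_2$-norm is dominated by $\|\mathbf{a}/\mathbf{v}\|_{\ell^2_\mathbf{v}}$ up to a constant, giving \eqref{label1} for $p=2$ (with $\|\cdot\|_{\ell^{2,\infty}_\mathbf{v}}\leq\|\cdot\|_{\ell^2_\mathbf{v}}$).

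For $p=\infty$: the space $\ell^{\infty}_\mathbf{v}=\ell^\infty$, and $\mathbf{S}_{\infty,\infty}$ should be read as the space of bounded operators with $\|\cdot\|_{\mathbf{S}_{\infty,\infty}}=\sup_n s_n=\|\cdot\|$. So I must show $\|H_\mathbf{a}\|\lesssim\sup_{\mathbf{j}}(|\mathbf{j}|+1)^d|a(|\mathbf{j}|)|$. This is exactly the boundedness criterion already recorded in the introduction: if $a(j)=O(j^{-d})$ then $H_\mathbf{a}$ is bounded, and a careful bookkeeping of that argument (Schur test on the kernel $\mathbf{a}(\mathbf{i}+\mathbf{j})=a(|\mathbf{i}+\mathbf{j}|)$, comparing $\sum_{\mathbf{j}}|a(|\mathbf{i}+\mathbf{j}|)|$ against a geometric-type bound) yields the quantitative estimate with the constant $\sup_j (j+1)^d|a(j)|=\|\mathbf{a}/\mathbf{v}\|_{\ell^\infty}$. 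The one-variable case $d=1$ of this is classical (Hilbert-matrix type bound); the $d$-variable version follows by the reduction of \S\ref{RWHO}, or directly.

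Finally I would interpolate. The operator-side scale $\{\mathbf{S}_{p,\infty}\}$ is a real-interpolation (Lorentz) scale: $\mathbf{S}_{p,\infty}=(\mathbf{S}_2,\mathbf{S}_\infty)_{\theta,\infty}$ with $\tfrac1p=\tfrac{1-\theta}{2}$, $\theta\in[0,1)$, and likewise on the sequence side $\ell^{p,\infty}_\mathbf{v}=(\ell^2_\mathbf{v},\ell^\infty)_{\theta,\infty}$ for the same $\theta$ (this is the standard Lorentz-space interpolation identity for a fixed measure $\mathbf{v}$, valid since $\mathbf{v}$ is $\sigma$-finite). The map $T:\tfrac{\mathbf{a}}{\mathbf{v}}\mapsto H_\mathbf{a}$ is linear, and by the two endpoint estimates it sends $\ell^2_\mathbf{v}\to\mathbf{S}_2$ and $\ell^\infty\to\mathbf{S}_\infty$ boundedly; real interpolation then gives $T:\ell^{p,\infty}_\mathbf{v}\to\mathbf{S}_{p,\infty}$ boundedly for all $p\in[2,\infty)$, which is precisely \eqref{label1}.

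I expect the main obstacle to be the interpolation bookkeeping rather than the endpoint estimates: one must be sure that $\{\mathbf{S}_{p,\infty}\}$ and $\{\ell^{p,\infty}_\mathbf{v}\}$ are obtained from their respective $L^2$ and $L^\infty$ endpoints by the \emph{same} real-interpolation functor and the \emph{same} parameter $\theta$, and that the quasi-norm nature of these spaces (for $p<\infty$ the $\mathbf{S}_{p,\infty}$ "norm" is only a quasi-norm when $p<1$, but here $p\geq 2$ so this is harmless) does not cause trouble. A small additional point is to make the $p=\infty$ endpoint fully rigorous by identifying $\mathbf{S}_{\infty,\infty}$ with the bounded operators and citing the boundedness criterion $a(j)=O(j^{-d})$ in quantitative form; the reduction lemma of \S\ref{RWHO} together with the classical $d=1$ Hilbert-type bound suffices for this.
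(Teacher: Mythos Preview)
Your approach is exactly the paper's: prove the strong-type endpoint bounds $\ell^2_\mathbf{v}\to\mathbf{S}_2$ (Hilbert--Schmidt computation) and $\ell^\infty\to\mathbf{S}_\infty$ (the paper does this via the pointwise bound $|\mathbf{a}(\mathbf{i}+\mathbf{j})|\leq\|\mathbf{a}/\mathbf{v}\|_\infty\prod_k(i_k+j_k+1)^{-1}$ and the tensor product of $d$ Hilbert matrices, giving $M_\infty=\pi^d$), then real-interpolate.

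Two small corrections. First, the lemma is stated for an \emph{arbitrary} parameter sequence $\mathbf{a}$, not only $\mathbf{a}(\mathbf{j})=a(|\mathbf{j}|)$; your $\mathbf{S}_2$ computation specialises to that case and also miscounts the multiplicity (the number of pairs $(\mathbf{i},\mathbf{j})$ with $\mathbf{i}+\mathbf{j}=\mathbf{k}$ is $\prod_i(k_i+1)$, not $W_d(|\mathbf{k}|)$, so the Hilbert--Schmidt sum is $\sum_\mathbf{k}\prod_i(k_i+1)|\mathbf{a}(\mathbf{k})|^2$, which the paper bounds directly by $\sum_\mathbf{k}(|\mathbf{k}|+1)^d|\mathbf{a}(\mathbf{k})|^2=\|\mathbf{a}/\mathbf{v}\|_{\ell^2_\mathbf{v}}^2$). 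The conclusion survives your slip, but the clean argument is the paper's. Second, your parenthetical ``with $\|\cdot\|_{\ell^{2,\infty}_\mathbf{v}}\leq\|\cdot\|_{\ell^2_\mathbf{v}}$'' points the wrong way to extract the weak-type $p=2$ bound from the strong one; in fact the paper's interpolation only yields \eqref{label1} for $p\in(2,\infty)$, which is all that is used downstream (Step~1 of Lemma~\ref{thm_1} needs $p=\tfrac{1}{\gamma}>2$).
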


\begin{proof}
	The proof is based on the real interpolation method (cf. \cite[Chapter 3]{bergh2012interpolation}). For observe that
	\begin{equation*}
		\begin{split}
			\|H_\mathbf{a}\|_{\mathbf{S}_2}^{2} & = \sum_{\mathbf{i},\mathbf{j}\in\mathbb{N}_0^d}|\mathbf{a}(\mathbf{i}+\mathbf{j})|^2\\
			& = \sum_{i_1,j_1\geq 0}\sum_{i_2,j_2\geq 0}\dots\sum_{i_d,j_d\geq 0}|\mathbf{a}(i_1+j_1,i_2+j_2,\dots,i_d+j_d)|^2\\
			& = \sum_{j_1,j_2,\dots,j_d\geq 0}(j_1+1)(j_2+1)\dots(j_d+1)|\mathbf{a}(j_1,j_2,\dots,j_d)|^2\\
			& \leq \sum_{\mathbf{j}\in\mathbb{N}_0^d}(|\mathbf{j}|+1)^d|\mathbf{a}(\mathbf{j})|^2\\
			& = \sum_{\mathbf{j}\in\mathbb{N}_0^d}(|\mathbf{j}|+1)^{2d}|\mathbf{a}(\mathbf{j})|^2 (|\mathbf{j}|+1)^{-d},
		\end{split}
	\end{equation*}
	so that $\|H_\mathbf{a}\|_{\mathbf{S}_2}\leq\|\frac{\mathbf{a}}{\mathbf{v}}\|_{\ell_{\mathbf{v}}^2}$. In addition, if $\frac{\mathbf{a}}{\mathbf{v}}\in\ell^{\infty}$, then
	\begin{equation*}
		|\mathbf{a}(\mathbf{j})|\leq\frac{\|\frac{\mathbf{a}}{\mathbf{v}}\|_{\ell^\infty}}{(|\mathbf{j}|+1)^d} \leq \frac{\|\frac{\mathbf{a}}{\mathbf{v}}\|_{\ell^\infty}}{(j_1+1)(j_2+1)\dots(j_d+1)}, \ \forall \mathbf{j}\in\mathbb{N}_0^d.
	\end{equation*}
	Thus,
	\begin{equation*}
		\begin{split}
			\left| (H_\mathbf{a}\mathbf{x},\mathbf{y}) \right| & \leq \sum_{\mathbf{i},\mathbf{j}\in\mathbb{N}_{0}^{d}} \left|\mathbf{a}(\mathbf{i}+\mathbf{j})\right|\left|\mathbf{x}(\mathbf{j})\right|\left|\mathbf{y}(\mathbf{i})\right|\\
			& \leq \left\| \frac{\mathbf{a}}{\mathbf{v}} \right\|_{\infty} \sum_{i_1,\dots,i_d,j_1,\dots,j_d\geq 0}\frac{\left|\mathbf{x}(\mathbf{j})\right|\left|\mathbf{y}(\mathbf{i})\right|}{(i_1+j_1+1)\dots(i_d+j_d+1)}\\
			& \leq \pi^d \left\| \frac{\mathbf{a}}{\mathbf{v}} \right\|_{\infty} \left\|\mathbf{x}\right\| \left\|\mathbf{y}\right\|, \ \forall \mathbf{x},\mathbf{y}\in\ell^2(\mathbb{N}_0^d),
		\end{split}
	\end{equation*}
	where the last line is derived from the boundedness of the tensor product of $d$ Hilbert matrices. Therefore, we have shown that there are constants $M_2=1$ and $M_\infty=\pi^d$ such that
	\begin{equation*}
		\|H_\mathbf{a}\|_{\mathbf{S}_2} \leq M_2 \left\| \frac{\mathbf{a}}{\mathbf{v}} \right\|_{\ell_{\mathbf{v}}^2} \ \ \text{and } \ \|H_\mathbf{a}\|\leq M_\infty \left\| \frac{\mathbf{a}}{\mathbf{v}} \right\|_{\ell^\infty},
	\end{equation*}
	and the real interpolation implies that, for any $p\in(2,+\infty)$, there exists a positive constant $M_p$ such that (\ref{label1}) holds true.
\end{proof}

\begin{lem}\label{thm_1}
	Let $\gamma>0$, $M(\gamma)$ be defined in (\ref{8}), and $\{a(j)\}_{j\in\mathbb{N}_{0}}$ be a real valued sequence that satisfies
	\begin{equation}\label{eq_3}
		a^{(m)}(j)=O\big(j^{-d-m} (\log j)^{-\gamma} \big), \ j\to+\infty,
	\end{equation}
	for every $m=0,1,\dots,M(\gamma)$. Then the Hankel operator $H_\mathbf{a}$, with parameter sequence $\mathbf{a}(\mathbf{j})=a(|\mathbf{j}|)$, for all $\mathbf{j}\in\mathbb{N}_0^d$, is compact and its singular values satisfy the following estimate
	\begin{equation}\label{label}
		s_n(H_\mathbf{a})=O(n^{-\gamma}), \ n\to+\infty.
	\end{equation}
	In addition, there exists a positive constant $C_{\gamma}=C(\gamma)$ such that
	\begin{equation}\label{eq_4}
		\|H_\mathbf{a}\|_{\mathbf{S}_{p,\infty}}\leq C_{\gamma} \sum_{m=0}^{M(\gamma)}\sup_{j\geq 0}(j+1)^{d+m}\big( \log(j+2) \big)^{\gamma}|a^{(m)}(j)|,
	\end{equation}
	where $p=\frac{1}{\gamma}$.
\end{lem}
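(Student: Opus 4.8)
The plan is to split the argument according to whether $\gamma\in(0,\tfrac12)$ or $\gamma\geq\tfrac12$, i.e.\ according to whether $M(\gamma)=0$ or $M(\gamma)=[\gamma]+1\geq1$ in \eqref{8}, and in both cases to prove the single estimate \eqref{eq_4} for $\|H_\mathbf{a}\|_{\mathbf{S}_{1/\gamma,\infty}}$. Once \eqref{eq_4} is available, \eqref{label} follows at once from the definition of $\mathbf{S}_{1/\gamma,\infty}$, while compactness is automatic since the hypothesis with $m=0$ forces $a(j)=o(j^{-d})$ (as $(\log j)^{-\gamma}\to0$). Throughout I would abbreviate $A_m:=\sup_{j\geq0}(j+1)^{d+m}(\log(j+2))^{\gamma}|a^{(m)}(j)|$, which is finite by \eqref{eq_3}, and — since \eqref{eq_4} is homogeneous of degree one in $a$, the case $a\equiv0$ being trivial — I would normalize $\sum_{m=0}^{M(\gamma)}A_m=1$, so that it suffices to bound $\|H_\mathbf{a}\|_{\mathbf{S}_{1/\gamma,\infty}}$ by a constant depending only on $d$ and $\gamma$.

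For $\gamma\in(0,\tfrac12)$ we have $M(\gamma)=0$ and $p:=1/\gamma>2$, so Lemma~\ref{lem_0} applies with the weight $\mathbf{v}(\mathbf{j})=(|\mathbf{j}|+1)^{-d}$ and it is enough to estimate $\big\|\tfrac{\mathbf{a}}{\mathbf{v}}\big\|_{\ell_{\mathbf{v}}^{p,\infty}}$. Here $(\mathbf{a}/\mathbf{v})(\mathbf{j})=a(|\mathbf{j}|)(|\mathbf{j}|+1)^{d}$, hence $|(\mathbf{a}/\mathbf{v})(\mathbf{j})|\lesssim(\log(|\mathbf{j}|+2))^{-\gamma}$, so the superlevel set $\{|(\mathbf{a}/\mathbf{v})(\mathbf{j})|>\lambda\}$ is contained in a ball $\{|\mathbf{j}|\lesssim\exp(c\lambda^{-1/\gamma})\}$. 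Since $W_d(j)(j+1)^{-d}\sim\frac1{(d-1)!}(j+1)^{-1}$, we get $\sum_{|\mathbf{j}|\leq N}\mathbf{v}(\mathbf{j})\lesssim\log(N+2)$, and plugging $N\approx\exp(c\lambda^{-1/\gamma})$ gives $\sum_{\{|(\mathbf{a}/\mathbf{v})(\mathbf{j})|>\lambda\}}\mathbf{v}(\mathbf{j})\lesssim\lambda^{-1/\gamma}=\lambda^{-p}$. Therefore $\big\|\tfrac{\mathbf{a}}{\mathbf{v}}\big\|_{\ell_{\mathbf{v}}^{p,\infty}}=\sup_{\lambda>0}\lambda\big(\sum_{\{\cdots\}}\mathbf{v}(\mathbf{j})\big)^{1/p}\lesssim1$, which is \eqref{eq_4} in this range.

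For $\gamma\geq\tfrac12$ we have $p:=1/\gamma\leq2$ and $M(\gamma)\geq1$, and Lemma~\ref{lem_0} is not available. Instead I would use Lemma~\ref{label2} to replace $H_\mathbf{a}$ by the weighted Hankel operator $\Gamma:=\Gamma_{\Phi\phi}^{\frac{d-1}{2},\frac{d-1}{2}}$, where $\phi(z)=\sum_{j}a(j)z^{j}$, and then Lemma~\ref{lem0} with $q=\infty$ to reduce matters to bounding the $L^{1/\gamma,\infty}(\mathcal{M},\mu)$-quasinorm of $F:=\bigoplus_{n\in\mathbb{N}_0}F_n$, $F_n:=2^{n(d-1)}(\phi*V_n)$, on $(\mathcal{M},\mu)=\bigoplus_n(\mathbb{T},2^n\mathbf{m})$. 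I would fix an exponent $q$ with $1/M(\gamma)<q<1/\gamma$, which is possible exactly because $M(\gamma)=[\gamma]+1>\gamma$; then Lemma~\ref{lem_2} applies, and \eqref{eq3} together with $|a(j)|\lesssim j^{-d}(\log j)^{-\gamma}$ gives $\|F_n\|_{L^\infty(\mathbb{T})}\lesssim(n+1)^{-\gamma}$, while \eqref{eq4} together with $|a^{(m)}(j)|\lesssim j^{-d-m}(\log j)^{-\gamma}$ for $m\leq M(\gamma)$ gives $\|F_n\|_{L^q(\mathbb{T},2^n\mathbf{m})}\lesssim(n+1)^{-\gamma}$ (the finitely many $n$ with $2^{n-1}<M(\gamma)$, for which Lemma~\ref{lem_2} does not apply, are handled directly, $F_n$ being then a fixed analytic polynomial with coefficients bounded multiples of finitely many $a(j)$). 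It remains to bound the distribution function $\mu_F(\lambda)=\sum_n2^n\,\mathbf{m}(\{t\in\mathbb{T}:|F_n(t)|>\lambda\})$: the $L^\infty$-bound shows the $n$-th term vanishes once $n\gtrsim\lambda^{-1/\gamma}$; on the remaining range Chebyshev's inequality gives $\mathbf{m}(\{|F_n|>\lambda\})\leq\lambda^{-q}2^{-n}\|F_n\|_{L^q(\mathbb{T},2^n\mathbf{m})}^{q}\lesssim\lambda^{-q}2^{-n}(n+1)^{-\gamma q}$, so $2^n\mathbf{m}(\{|F_n|>\lambda\})\lesssim\min\{2^n,\ \lambda^{-q}(n+1)^{-\gamma q}\}$; summing the geometric part over the small indices and $\lambda^{-q}\sum_{n\lesssim\lambda^{-1/\gamma}}(n+1)^{-\gamma q}\lesssim\lambda^{-q}\big(\lambda^{-1/\gamma}\big)^{1-\gamma q}=\lambda^{-1/\gamma}$ (using $\gamma q<1$) over the rest yields $\mu_F(\lambda)\lesssim\lambda^{-1/\gamma}$, i.e.\ $\|F\|_{L^{1/\gamma,\infty}(\mathcal{M},\mu)}\lesssim1$. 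Via Lemmas~\ref{lem0} and~\ref{label2} this is \eqref{eq_4}.

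The step I expect to be the main obstacle is the distribution-function estimate in the case $\gamma\geq\tfrac12$: the dyadic weights $2^n$ in $\mu$ force one to balance the $L^\infty$ bound on each block $F_n$ (which truncates the dyadic sum at $n\approx\lambda^{-1/\gamma}$) against the Chebyshev/$L^q$ bound (which makes the truncated sum converge), and the two combine to give the sharp power $\lambda^{-1/\gamma}$ only when $q<1/\gamma$. Non-emptiness of the admissible window $1/M(\gamma)<q<1/\gamma$ is precisely the statement $M(\gamma)=[\gamma]+1>\gamma$, so the number of iterated differences required in \eqref{eq_3} is exactly what is needed to close the argument; by comparison, deriving the per-block estimates $\|F_n\|_{L^\infty}\lesssim(n+1)^{-\gamma}$ and $\|F_n\|_{L^q}\lesssim(n+1)^{-\gamma}$ from Lemma~\ref{lem_2} is a routine dyadic summation of $j^{-d}(\log j)^{-\gamma}$, and the case $\gamma\in(0,\tfrac12)$ is immediate from Lemma~\ref{lem_0}.
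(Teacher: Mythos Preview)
Your proposal is correct and follows essentially the same approach as the paper: the same case split on $\gamma$, the same use of Lemma~\ref{lem_0} for $\gamma<\tfrac12$, and for $\gamma\geq\tfrac12$ the same reduction via Lemmas~\ref{label2} and~\ref{lem0} followed by the $L^\infty$-truncation/$L^q$-Chebyshev balancing on the dyadic blocks with an exponent $q\in(1/M(\gamma),1/\gamma)$. The paper's argument differs only cosmetically (it carries the constants $A_m$ explicitly rather than normalizing $\sum_m A_m=1$, and does not single out a ``geometric part'' for small indices).
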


\begin{proof}
We split the proof in steps. In the first step, we prove the result when $\gamma\in(0,\frac{1}{2})$. In the second step, we treat the case of $\gamma\geq\frac{1}{2}$.\\ \ \\
\noindent\underline{\textbf{Step 1:}} Let $\gamma\in(0,\frac{1}{2})$. Then $p=\frac{1}{\gamma}\in(2,+\infty)$ and thus, in order to prove that $H_\mathbf{a}\in\mathbf{S}_{p,\infty}$, it is enough to apply Lemma \ref{lem_0}. To this end, it only needs to show that if $a$ satisfies (\ref{eq_3}), then $\frac{\mathbf{a}}{\mathbf{v}}\in\ell_{\mathbf{v}}^{p,\infty}$, for every $p\in(2,+\infty)$, where $\mathbf{v}$ is defined in Lemma \ref{lem_0}. For $\lambda>0$,
\begin{equation*}
	\begin{split}
		\left\{ \mathbf{j}\in\mathbb{N}_0^d: \ \frac{|\mathbf{a}(\mathbf{j})|}{\mathbf{v}(\mathbf{j})}>\lambda \right\} & = \left\{ \mathbf{j}\in\mathbb{N}_0^d: \ (|\mathbf{j}|+1)^d |a(|\mathbf{j}|)|>\lambda \right\}\\
		& \subset \left\{ \mathbf{j}\in\mathbb{N}_0^d: \ \log(|\mathbf{j}|+2)<\left(\frac{A_0}{\lambda}\right)^p \right\},
	\end{split}
\end{equation*}
where $A_0:=\sup_{j\geq 0}(j+1)^d\big( \log(j+2) \big)^{\gamma}|a(j)|$. Therefore,
	\begin{equation*}
		\begin{split}
			\sum_{\left\{\mathbf{j}\in\mathbb{N}_{0}^{d}: \ \frac{|\mathbf{a}(\mathbf{j})|}{\mathbf{v}(\mathbf{j})}>\lambda \right\}}\frac{1}{(|\mathbf{j}|+1)^d} & \lesssim \sum_{\left\{\mathbf{j}\in\mathbb{N}_{0}^{d}: \ \log(|\mathbf{j}|+2)< \left(\frac{A_0}{\lambda}\right)^{p} \right\}}\frac{1}{(|\mathbf{j}|+2)^d}\\
			& = \sum_{\left\{j\in\mathbb{N}_{0}: \ \log(j+2)< \left(\frac{A_0}{\lambda}\right)^{p} \right\}}\frac{W_d(j)}{(j+2)^d}\\
			& \lesssim \sum_{\left\{ j\in\mathbb{N}_0: \ \log(j+2)<\left( \frac{A_0}{\lambda} \right)^p \right\}} \frac{(j+2)^{d-1}}{(j+2)^d}\\
			& \lesssim \int_{\left\{ \log(x+2)<\left( \frac{A_0}{\lambda} \right)^p \right\}} \frac{1}{x+2}\dd{x}\\
			& \lesssim\left(\frac{A_0}{\lambda}\right)^p.
		\end{split}
	\end{equation*}
	Thus, there exists a positive constant $C$ such that
	$$\lambda^p \sum_{\left\{\mathbf{j}\in\mathbb{N}_{0}^{d}: \  \frac{|\mathbf{a}(\mathbf{j})|}{\mathbf{v}(\mathbf{j})}>\lambda \right\}}\frac{1}{(|\mathbf{j}|+1)^d}\leq (CA_0)^p,$$
	which implies, by taking supremum over positive $\lambda$'s, that $\left\| \frac{\mathbf{a}}{\mathbf{v}} \right\|_{\ell_{\mathbf{v}}^{p,\infty}}\leq CA_0$. From the last relation and Lemma \ref{lem_0}, we conclude that
	$$\|H_\mathbf{a}\|_{\mathbf{S}_{p,\infty}}\leq M_p\left\| \frac{\mathbf{a}}{\mathbf{v}} \right\|_{\ell_{\mathbf{v}}^{p,\infty}}\leq M_p CA_0,$$
	so that relation (\ref{eq_4}) comes true, by setting $C_{\gamma}=M_pC$.\\ \ \\	
	\noindent\underline{\textbf{Step 2:}} Assume that $\gamma\geq\frac{1}{2}$ and let $\phi$ be given by
	\begin{equation*}
		\phi(z)=\sum_{j\in\mathbb{N}_0}a(j)z^j, \ \forall z\in\overline{\mathbb{D}}.
	\end{equation*}
	According to Lemma \ref{label2}, $H_\mathbf{a}$ and $\Gamma_{\Phi\phi}^{\frac{d-1}{2},\frac{d-1}{2}}$ satisfy the same Schatten class inclusions, where $(\Phi\phi)(j)=a(j)$. Therefore, in order to derive (\ref{label}), Lemma \ref{lem0} suggests that it is enough to show that $\bigoplus_{n\geq 0}2^{n(d-1)}\phi*V_n\in L^{p,\infty}(\mathcal{M},\mu)$, or, in other words, that
	\begin{equation}\label{eq11}
		\sup_{s>0} s^p \sum_{n\in\mathbb{N}_0} 2^{n}\big| \{ t\in [-\pi,\pi): |2^{n(d-1)}(\phi*V_n)(e^{it})|>s \} \big|<+\infty.
	\end{equation}
	For every non-negative integer $n$ and any positive number $s$, set
	\begin{equation*}
		E_n(s):=\{ t\in[-\pi,\pi): |2^{n(d-1)}(\phi*V_n)(e^{it})|>s \}.
	\end{equation*}
	The goal is to find an estimate for $|E_n(s)|$ which proves the finiteness of (\ref{eq11}). First of all, notice that $E_n(s)=\varnothing$, for every $s\geq\|2^{n(d-1)}\phi*V_n\|_{\infty}$. An application of (\ref{eq3}) gives that $E_n(s)=\varnothing$, for every $s\geq 2^{n(d-1)} \sum_{j=2^{n-1}}^{2^{n+1}}|a(j)|$. Let
	$$A_m:=\sup_{j\geq 0}\big|a^{(m)}\big|(j+1)^{d+m}\big( \log(j+2) \big)^{\gamma}, \ \forall m=0,1,\dots,M(\gamma).$$
	Therefore, condition (\ref{eq_3}) implies that $E_n(s)=\varnothing$ when
	\begin{equation*}
		s\geq2^{n(d-1)} A_0 \sum_{j=2^{n-1}}^{2^{n+1}} (j+1)^{-d} \big( \log(j+2) \big)^{-\gamma}.
	\end{equation*}
	Besides, for every $n\geq3$,
	\begin{equation*}
		\begin{split}
			\sum_{j=2^{n-1}}^{2^{n+1}} (j+1)^{-d} \big( \log(j+2) \big)^{-\gamma} & \leq \int_{2^{n-1}-1}^{2^{n+1}} (t+1)^{-d} \big( \log(t+2) \big)^{-\gamma}\dd{t}\\
			& \lesssim \int_{n-1}^{n+1} 2^{-s(d-1)} s^{-\gamma}\dd{s}\ (\text{change of variable} \ s=\log_2t)\\
			& \lesssim 2^{-n(d-1)}n^{-\gamma},
		\end{split}
	\end{equation*}
	so that in general,
	$$\sum_{j=2^{n-1}}^{2^{n+1}} (j+1)^{-d} \big( \log(j+2) \big)^{-\gamma} \leq C 2^{-n(d-1)} \left< n \right>^{-\gamma}, \ \forall n\geq 0,$$
	for some positive constant $C$; without loss of generality, we may assume that $C=C_q$, where $C_q$ appears in (\ref{eq4}). Therefore, $E_n(s)=\varnothing$, for every $n\geq 0$ such that $\left< n \right>\geq N(s)$, where $N(s):=(\frac{C_qA_0}{s})^p$, $\forall s>0$. Besides, by following exactly the same steps, it can be shown that
	\begin{equation*}
		\sum_{j=2^{n-1}-M(\gamma)}^{2^{n+1}}(j+1)^m|a^{(m)}(j)|\lesssim A_m 2^{-n(d-1)}\left< n \right>^{-\gamma}, \ \forall m=1,2,\dots,M(\gamma).
	\end{equation*}
	Thus, Lemma \ref{lem_2} gives that, for every $q>\frac{1}{M(\gamma)}$ and $n\in\mathbb{N}_0$ such that $M(\gamma)\leq 2^{n-1}$,
	\begin{equation*}
		2^n\|\phi*V_n\|_q^q\lesssim C_q A^q 2^{-n(d-1)q}\left< n \right>^{-\gamma q},
	\end{equation*}
	where $A:=\sum_{m=0}^{M(\gamma)}A_m$.
	Now notice that, for any positive $q$,
	\begin{equation*}
		\begin{split}
			s^q|E_n(s)| & = \int\limits_{\left\{t\in[-\pi,\pi): \ 2^{n(d-1)}|(\phi*V_n)(e^{it})|>s\right\}} s^q\dd{t}\\
			& \leq 2^{n(d-1)q} \|\phi*V_n\|_q^q, \ \forall n\in\mathbb{N}_0.
		\end{split}
	\end{equation*}
	Putting all these together results that, for every $q\in (\frac{1}{M(\gamma)},p)$ and $s>0$,
	\begin{equation*}
		\begin{split}
			s^p\sum_{n\in\mathbb{N}_0}2^n |E_n(s)| & =s^{p-q} \left( s^q \sum_{\left<n\right>\leq N(s)}2^n |E_n(s)| \right)\\
			& \leq s^{p-q} \sum_{\left<n\right>\leq N(s)}2^n 2^{n(d-1)q}\|\phi*V_n\|_q^q\\
			& \lesssim s^{p-q} C_q A^q \sum_{\left<n\right>\leq N(s)} 2^{n(d-1)q} 2^{-n(d-1)q} \left< n \right>^{-\gamma q}\\
			& \lesssim C_qA^q s^{p-q} N^{1-\gamma q}(s), \ (\text{since} \ \gamma q=\frac{q}{p} \ \text{and} \ q<p).
		\end{split}
	\end{equation*}
	Finally, notice that $s^{p-q}N^{1-\gamma q}(s)= s^{p-q} (C_qA_0)^{p-q}s^{-(p-q)}=(C_qA_0)^{p-q},$ so there is a positive constant $K$, independent of $s$, such that
	$$s^p\sum_{n\in\mathbb{N}_0}2^n |E_n(s)|\leq K^p A^p, \ \forall s>0,$$
	and this proves the desired result. Finally, Lemma \ref{lem0} suggests that there is a positive constant $K_{\gamma}$ such that
	$$\|H_\mathbf{a}\|_{\mathbf{S}_{p,\infty}}=\|\Gamma\|_{\mathbf{S}_{p,\infty}}\leq K_{\gamma} \|\phi\|_{\mathcal{B}_{p,\infty}^{\frac{1}{p}+d-1}}\leq K_{\gamma} K A,$$
	where $\Gamma$ is given by (\ref{eq14}). This gives relation (\ref{eq_4}), with $C_{\gamma}=K_{\gamma}K$.
\end{proof}

\begin{lem}\label{thm2_7}
	Let $\gamma>0$ and $\{a(j)\}_{j\in\mathbb{N}_{0}}$ be a real valued sequence such that
	\begin{equation}\label{eq12}
		a^{(m)}(j)=o\big(j^{-d-m} (\log j)^{-\gamma} \big), \ j\to+\infty,
	\end{equation}
	for every $m=0,1,\dots,M(\gamma)$. Then the Hankel operator $H_\mathbf{a}$, with parameter sequence $\mathbf{a}(\mathbf{j})=a(|\mathbf{j}|)$, for all $\mathbf{j}\in\mathbb{N}_0^d$, is compact and its singular values satisfy the following estimate
	$$s_n(H_\mathbf{a})=o(n^{-\gamma}), \ n\to+\infty.$$
\end{lem}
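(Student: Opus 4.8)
The plan is to bootstrap the quantitative $\mathbf{S}_{1/\gamma,\infty}$ bound (\ref{eq_4}) of Lemma~\ref{thm_1} from an $O(n^{-\gamma})$ to an $o(n^{-\gamma})$ statement by means of a smooth cutoff followed by a finite-rank perturbation. First, note that hypothesis (\ref{eq12}) is stronger than (\ref{eq_3}), so Lemma~\ref{thm_1} already yields that $H_{\mathbf{a}}$ is compact, and only the sharper decay rate remains to be shown. I would fix a function $\eta\in C^\infty(\mathbb{R})$ with $0\le\eta\le1$, $\eta(t)=0$ for $t\le1$ and $\eta(t)=1$ for $t\ge2$, and for $N\in\mathbb{N}$ split $a=a_N+b_N$ with $a_N(j)=\eta(j/N)a(j)$. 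The head piece $b_N$ vanishes for $j\ge2N$, so the Hankel operator $H_{\mathbf{b}_N}$ attached to $\mathbf{b}_N(\mathbf{j})=b_N(|\mathbf{j}|)$ has entries $b_N(|\mathbf{i}|+|\mathbf{j}|)$ that vanish unless $|\mathbf{i}|,|\mathbf{j}|<2N$; since $\{\mathbf{i}\in\mathbb{N}_0^d:|\mathbf{i}|<2N\}$ is finite, $H_{\mathbf{b}_N}$ has finite rank $R_N$. The whole problem then reduces to making $\|H_{\mathbf{a}_N}\|_{\mathbf{S}_{1/\gamma,\infty}}$ small.

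The core step is to verify that for every $\varepsilon>0$ there is $N=N(\varepsilon)$ with
\[
\sup_{j\ge0}(j+1)^{d+m}\bigl(\log(j+2)\bigr)^{\gamma}\bigl|a_N^{(m)}(j)\bigr|\lesssim\varepsilon,\qquad m=0,1,\dots,M(\gamma).
\]
I would prove this from the discrete Leibniz rule
\[
a_N^{(m)}(j)=\sum_{k=0}^{m}\binom{m}{k}\bigl(\eta(\,\cdot\,/N)\bigr)^{(k)}(j)\,a^{(m-k)}(j+k),
\]
combined with the elementary bound $\bigl|\bigl(\eta(\,\cdot\,/N)\bigr)^{(k)}(j)\bigr|\le N^{-k}\|\eta^{(k)}\|_\infty$, which in addition forces that factor to be supported in the transition region $N-k\le j<2N$, i.e. $j\asymp N$ for $N$ large. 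On that region (\ref{eq12}) gives $|a^{(m-k)}(j+k)|\le\varepsilon\,(j+k)^{-d-m+k}\bigl(\log(j+k)\bigr)^{-\gamma}$, and since $j+k\asymp N$ the product $N^{-k}(j+k)^{-d-m+k}$ is comparable to $N^{-d-m}\asymp(j+1)^{-d-m}$, so each term is $\lesssim\varepsilon\,(j+1)^{-d-m}\bigl(\log(j+2)\bigr)^{-\gamma}$; for $j\ge2N$ only the $k=0$ term survives and equals $a^{(m)}(j)$, while for $j\le N-M(\gamma)$ every term vanishes. Feeding the displayed bound into (\ref{eq_4}) with $p=1/\gamma$ gives $\|H_{\mathbf{a}_N}\|_{\mathbf{S}_{1/\gamma,\infty}}\lesssim\varepsilon$, i.e. $s_n(H_{\mathbf{a}_N})\lesssim\varepsilon\,n^{-\gamma}$ for all $n$.

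Finally I would combine the two pieces by the Weyl inequality $s_{2n-1}(H_{\mathbf{a}})\le s_n(H_{\mathbf{a}_N})+s_n(H_{\mathbf{b}_N})$: since $s_n(H_{\mathbf{b}_N})=0$ for $n>R_N$, for such $n$ one gets $s_{2n-1}(H_{\mathbf{a}})\lesssim\varepsilon\,n^{-\gamma}$, and monotonicity of singular values upgrades this to $\limsup_{k\to+\infty}k^{\gamma}s_k(H_{\mathbf{a}})\lesssim\varepsilon$. Letting $\varepsilon\downarrow0$ yields $s_n(H_{\mathbf{a}})=o(n^{-\gamma})$. The main obstacle is the uniform-in-$j$ verification of the bound on $a_N^{(m)}$, in particular the careful treatment of the transition region $N\le j\le 2N$ through the discrete Leibniz rule and the $N^{-k}$ decay of the differences of $\eta(\,\cdot\,/N)$; once that is in hand the rest is routine finite-rank bookkeeping. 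It is worth noting that a sharp cutoff $\mathbf{1}_{\{j\ge N\}}a(j)$ would not work: its jump produces finite differences of size $\asymp N^{-d}(\log N)^{-\gamma}$, which violates the required $(j+1)^{-d-m}$ weight for $m\ge1$, and it is precisely the smoothness of $\eta$ that supplies the missing powers of $N^{-1}$.
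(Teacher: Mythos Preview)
Your proposal is correct and follows essentially the same route as the paper: a smooth cutoff $a=a_N+b_N$ with $a_N(j)=\eta(j/N)a(j)$ (the paper uses the complementary cutoff $\chi_0=1-\eta$), the discrete Leibniz rule together with the $N^{-k}$ decay of the differences of $\eta(\cdot/N)$ to feed the tail $a_N$ into the quantitative bound~(\ref{eq_4}), and a finite-rank piece $H_{\mathbf{b}_N}$. The only cosmetic difference is the endgame: the paper invokes that $\mathbf{S}_{p,\infty}^{0}$ is the $\mathbf{S}_{p,\infty}$-closure of the finite rank operators, whereas you spell this out via the Ky~Fan inequality $s_{2n-1}\le s_n+s_n$; both are equivalent and your remark on why a sharp cutoff would fail is to the point.
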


\begin{proof}
	The goal is to show that $H_\mathbf{a}\in\mathbf{S}_{p,\infty}^{0}$, for $p=\frac{1}{\gamma}$. The ideal $\mathbf{S}_{p,\infty}^{0}$ is the $\left\|\cdot\right\|_{\mathbf{S}_{p,\infty}}$-closure of finite rank operators. So, it is enough to approximate $H_\mathbf{a}$ by finite rank operators in the $\left\|\cdot\right\|_{\mathbf{S}_{p,\infty}}$ quasi-norm. For consider the cut-off function
	\begin{equation*}
		\chi_0(t)=\begin{cases}
			1, & t\in[0,1]\\
			0, & t\geq2,
		\end{cases}
	\end{equation*}
such that $\chi_0\in C^{\infty}(\mathbb{R}_+)$ and $0\leq\chi_0\leq1$. In addition, for every $N\in\mathbb{N}$, define the sequences
\begin{equation*}
	q_N(j)=a(j)\chi_0(\tfrac{j}{N}) \ \text{ and } \ h_N(j)=a(j)-q_N(j), \ \forall j\in\mathbb{N}_0.
\end{equation*}
Let $H_{\mathbf{q}_N}$ and $H_{\mathbf{h}_N}$ be the Hankel operators, with parameter sequences $\mathbf{q}_N(\mathbf{j})=q_N(|\mathbf{j}|)$ and $\mathbf{h}_N(\mathbf{j})=h_N(|\mathbf{j}|)$, $\forall \mathbf{j}\in\mathbb{N}_0^d$, respectively. In other words, $H_{\mathbf{h}_N}=H_\mathbf{a}-H_{\mathbf{q}_N}$. Then, by using the Leibniz rule,
\begin{equation*}
	\big(h_N\big)^{(m)}(j)=\sum_{n=0}^{m}{m \choose n}a^{(m-n)}(j+n)\big(1-\chi_0\big)^{(n)}(\tfrac{j}{N}), \ \forall j\in\mathbb{N}_0.
\end{equation*}
Therefore, for every $j\geq2$,
\begin{equation}\label{1}
	\begin{split}
		\left|\big(h_N\big)^{(m)}(j)j^{d+m}(\log j)^{\gamma}\right| & \leq \sum_{n=0}^{m}{m \choose n}\left|a^{(m-n)}(j+n)\right|j^{d+m-n}(\log j)^\gamma j^n\big(1-\chi_0\big)^{(n)}(\tfrac{j}{N})\\
		& \leq \sum_{n=0}^{m}{m \choose n}\left|a^{(m-n)}(j+n)\right|(j+n)^{d+m-n}(\log (j+n))^\gamma j^n\big(1-\chi_0\big)^{(n)}(\tfrac{j}{N}).
	\end{split}
\end{equation}
Moreover, observe that, for any $n\in\mathbb{N}$,
\begin{equation*}
	t^n\big(1-\chi_{0}\big)^{(n)}\big(\tfrac{t}{N}\big)=-\big(\tfrac{t}{N}\big)^n\chi_{0}^{(n)}\big(\tfrac{t}{N}\big), \ \forall t\in\mathbb{R}_+.
\end{equation*}
As a result, by recalling that $\chi_0$ is compactly supported, there exist positive constants $K_n$, $n=1,2,\dots,M(\gamma)$, independent of $N$, such that
	\begin{equation*}
		\sup_{t>0}t^n \big(1-\chi_{0}\big)^{(n)}\big(\tfrac{t}{N}\big)\leq K_n.
	\end{equation*}
	By considering $K:=\max_{1\leq n\leq M(\gamma)}\{K_n\}$, (\ref{1}) gives
	\begin{equation*}
		\left|\big(h_N\big)^{(m)}(j)\right|j^{d+m}(\log j)^{\gamma} \leq K \sum_{n=0}^{m}{m \choose n}\left|a^{(m-n)}(j+n)\right|(j+n)^{d+m-n}(\log (j+n))^\gamma, \ \forall j\geq 2.
	\end{equation*}
	By taking supremum,
	\begin{equation}\label{5}
		\sup_{j>N}\left|\big(h_N\big)^{(m)}(j)\right|j^{d+m}(\log j)^{\gamma} \lesssim \sum_{n=0}^{m}{m \choose n}\sup_{j>N}\left|a^{(m-n)}(j+n)\right|(j+n)^{d+m-n}(\log (j+n))^\gamma.
	\end{equation}
	Under the assumption (\ref{eq12}) for $a$, we see that, for any $N\in\mathbb{N}$, $h_N$ satisfies assumption (\ref{eq_3}) of Lemma \ref{thm_1} and consequently, $H_{\mathbf{h}_N}$ satisfies relation (\ref{eq_4}). Thus, there exists a constant $C_\gamma$ such that
	\begin{equation*}
		\begin{split}
			\left\| H_\mathbf{a}-H_{\mathbf{q}_N} \right\|_{\mathbf{S}_{p,\infty}} & \leq C_\gamma\sum_{m=0}^{M(\gamma)} \sup_{j\in\mathbb{N}_0} \left| \big(h_N\big)^{(m)}(j) \right| (j+1)^{d+m} \big(\log(j+2)\big)^\gamma\\
			& = C_\gamma \sum_{m=0}^{M(\gamma)}\sup_{j>N} \left| \big(h_N\big)^{(m)}(j) \right| (j+1)^{d+m} \big(\log(j+2)\big)^\gamma.
		\end{split}
	\end{equation*}
	Then (\ref{5}) implies that
	\begin{equation}\label{6}
		\left\| H_\mathbf{a}-H_{\mathbf{q}_N} \right\|_{\mathbf{S}_{p,\infty}} \lesssim \sum_{m=0}^{M(\gamma)} \sum_{n=0}^{m}{m \choose n}\sup_{j>N}\left|a^{(m-n)}(j+n)\right|(j+n)^{d+m-n}(\log (j+n))^\gamma.
	\end{equation}
	Notice that assumption (\ref{eq12}) implies that, for any $n\in\mathbb{N}$,
	\begin{equation*}
		\limsup_{j\to+\infty}\left|a^{(m)}(j+n)\right|(j+n)^{d+m}(\log (j+n))^\gamma=0, \ \forall m=0,1,\dots,M(\gamma).
	\end{equation*}
	Thus, letting $N\to+\infty$ in (\ref{6}) results that the right hand side converges to zero and the result is obtained.
\end{proof}

\section{Proof of Theorem \ref{thm0}}

\begin{proof}
Let $\tilde{a}$ be the sequence that is defined in (\ref{eq2_4'}) and generates the model operator $\tilde{H}$. Then $H_\mathbf{a}=\tilde{H}+(H_\mathbf{a}-\tilde{H})$, where $H_\mathbf{a}-\tilde{H}=H_{\mathbf{a}-\tilde{\mathbf{a}}}$ is a Hankel operator with parameter sequence $\big(\mathbf{a}-\tilde{\mathbf{a}}\big)(\mathbf{j})=\big(a-\tilde{a}\big)(|\mathbf{j}|)$, for any $\mathbf{j}\in\mathbb{N}_0^d$. By observing (\ref{eq5}) and Lemma \ref{lem3''}, we see that the sequences $a$ and $\tilde{a}$ present the same asymptotic behaviour, modulo some error terms. Thus,
\begin{equation*}
	\big(a-\tilde{a}\big)(j)=h_1(j)+(-1)^jh_{-1}(j), \ \forall j\geq 2,
\end{equation*}
where $h_{\pm1}(j):=\big(g_{\pm1}-\tilde{g}_{\pm1}\big)(j)$, $\forall j\in\mathbb{N}_0$. Furthermore, notice that relation (\ref{eqc_18'}) in Lemma \ref{lem3''} implies that
\begin{equation*}
	\tilde{g}_{\pm1}^{(m)}=o\big(j^{-d-m}(\log j)^{-\gamma}\big), \ j\to+\infty.
\end{equation*}
The same relation is satisfied by $g_{\pm1}$, as well, by assumption. Therefore,
\begin{equation*}
	h_{\pm1}^{(m)}=o\big(j^{-d-m}(\log j)^{-\gamma}\big), \ j\to+\infty.
\end{equation*}
Consider the Hankel operators $H_{\mathbf{h}_{\pm 1}}:\ell^2(\mathbb{N}_0^d)\to \ell^2(\mathbb{N}_0^d)$, with parameter sequence $\mathbf{h}_{\pm1}(\mathbf{j})=h_{\pm1}(\left|\mathbf{j}\right|)$, for all $\mathbf{j}\in\mathbb{N}_0^d$. Then Lemma \ref{thm2_7} yields that their singular values satisfy the following asymptotic law:
\begin{equation}\label{eq}
	s_n(H_{\mathbf{h}_{\pm 1}})=o(n^{-\gamma}), \ n\to+\infty.
\end{equation}
Let $\mathfrak{h}_{-1}(j):=(-1)^jh_{-1}(j)$, for all $j\in\mathbb{N}_0$, and consider the Hankel operator $H_{\boldsymbol{\mathfrak{h}}_{-1}}:\ell^2(\mathbb{N}_0^d)\to \ell^2(\mathbb{N}_0^d)$, with parameter sequence $\boldsymbol{\mathfrak{h}}_{-1}(\mathbf{j})=\mathfrak{h}_{-1}(\left|\mathbf{j}\right|)$, for all $\mathbf{j}\in\mathbb{N}_0^d$. Then $H_{\boldsymbol{\mathfrak{h}}_{-1}}$ and $H_{\mathbf{h}_{-1}}$ are unitarily equivalent. Indeed, by defining the unitary operator $Q:\ell^2(\mathbb{N}_0^d)\to\ell^2(\mathbb{N}_0^d)$, with
\begin{equation*}
	\big(Q\mathbf{x}\big)(\mathbf{j})=(-1)^{|\mathbf{j}|}\mathbf{x}(\mathbf{j}), \ \forall \mathbf{j}\in\mathbb{N}_0^d, \ \forall \mathbf{x}\in\ell^2(\mathbb{N}_0^d),
\end{equation*}
it is checked easily that $H_{\boldsymbol{\mathfrak{h}}_{-1}}=Q^*H_{\mathbf{h}_{-1}}Q$. Thus, the singular values of $H_{\boldsymbol{\mathfrak{h}}_{-1}}$ satisfy (\ref{eq}). Notice that $H_{\mathbf{a}}-\tilde{H}=H_{\mathbf{h}_1}+H_{\boldsymbol{\mathfrak{h}}_{-1}}$. Therefore, since the space $\mathbf{S}_{p,\infty}^0$ is linear, the singular values of $H_\mathbf{a}-\tilde{H}$ satisfy (\ref{eq}). Finally, recall that the eigenvalue asymptotics of $\tilde{H}$ are given in Lemma \ref{prop0}. Thus by combining with the fact that
\begin{equation*}
	s_n(H_a-\tilde{H})=o(n^{-\gamma}), \ n\to+\infty,
\end{equation*}
Lemma \ref{lem2_1} yields the asymptotic law (\ref{eq1}).
\end{proof}

\appendix

\section{Discussion of Lemma \ref{lem0}}\label{A}

The discussion is provided in order to clarify some subtle points that exist in the proof of \cite[Theorem 6.4.4]{peller2012hankel}. The proof is based on the retract argument, which requires the construction of two specific bounded operators $\mathcal{I}$ and $\mathcal{K}$; for details, see below. This discussion mainly aims to clarify the boundedness of $\mathcal{K}$, which is defined in (\ref{K}).

The claim is equivalent to the fact that the mapping $\phi\mapsto\Gamma_{\Phi\phi}^{\frac{d-1}{2},\frac{d-1}{2}}$, where $\Phi\phi$ is the sequence of the Fourier coefficients of $\phi$, is a bounded linear operator from $\mathcal{B}_{p,q}^{\frac{1}{p}+d-1}$ to $\mathbf{S}_{p,q}$. The boundedness is proved via interpolation arguments.

For the reader's convenience, we recall a real interpolation method (the \textit{K-method}) (cf. \cite[\S 3.1]{bergh2012interpolation}) and the reiteration theorem (cf. \cite[\S 3.5]{bergh2012interpolation}). If $X_0$ and $X_1$ are two quasi-Banach spaces which are continuously embedded into the same Hausdorff topological space, then $X_0$ and $X_1$ are called \textit{compatible}. If $(X_0,X_1)$ is a compatible pair of quasi-Banach spaces, then real interpolation with parameters $\theta\in(0,1)$ and $q\in[1,+\infty]$, or $\theta\in[0,1]$ and $q=+\infty$, results an intermediate quasi-Banach space $X_{\theta,q}:=(X_0,X_1)_{\theta,q}$.

\begin{thm}[Reiteration Theorem]
	Let $X_{\theta_0,q_0}$ and $X_{\theta_1,q_1}$ be two interpolation spaces created by the K-method from the compatible couple of quasi-Banch spaces $(X_0,X_1)$. Then, for any $q\in[1,+\infty]$ and $t\in(0,1)$,
	\begin{equation*}
		(X_{\theta_0,q_0},X_{\theta_1,q_1})_{t,q}=X_{\theta,q} \ \ \text{where } \ \theta=(1-t)\theta_0+t\theta_1.
	\end{equation*}
\end{thm}

An application of real interpolation and the reiteration theorem yields that, for every $p_0\in(0,+\infty)$, $\theta\in(0,1)$ and $q\in(0,+\infty]$,
\begin{equation}\label{Ap}
	\left( \mathbf{S}_{p_0}, \mathbf{S}_{\infty} \right)_{\theta,q}=\mathbf{S}_{p,q}, \ \ \text{where } \ p=\frac{p_0}{1-\theta}.
\end{equation}
For the reader's convenience we give a sketch of proof for (\ref{Ap}) and we refer to \cite{karadzhov1976application} for further details. Relation (\ref{Ap}) holds true for $p_0<q$ (cf. \cite[(11)]{karadzhov1976application}). Then an application of the reiteration theorem yields
\begin{equation}\label{Ap0}
	(\mathbf{S}_{p_0,q_0},\mathbf{S}_{p_1,q_1})_{\theta,q}=\mathbf{S}_{p,q}, \ \ \text{where } \ \frac{1}{p}=\frac{1-\theta}{p_0}+\frac{\theta}{p_1},
\end{equation}
for any $\theta\in(0,1)$ and $q\in(0,+\infty]$ (cf. \cite[(14)]{karadzhov1976application}). Finally, by using (\ref{Ap0}) together with the reiteration theorem, one obtains (\ref{Ap}) for any $q\in(0,+\infty]$.

Now we go back to our main claim; namely, the mapping $\phi\mapsto\Gamma_{\Phi\phi}^{\frac{d-1}{2},\frac{d-1}{2}}$ is a bounded linear operator from $\mathcal{B}_{p,q}^{\frac{1}{p}+d-1}$ to $\mathbf{S}_{p,q}$. Theorem \ref{thm2_8} implies that this mapping represents a bounded linear operator from $B_{p}^{\frac{1}{p}+d-1}$ to $\mathbf{S}_p\subset\mathbf{S}_\infty$, $\forall p\in(0,+\infty)$. Therefore, with due regard to (\ref{Ap}), it is enough to prove that, for every $p_0,p_1\in(0,+\infty)$, $\theta\in(0,1)$ and $q\in(0,+\infty]$,
\begin{equation}\label{A0}
	\left( B_{p_0}^{\frac{1}{p_0}+d-1},B_{p_1}^{\frac{1}{p_1}+d-1} \right)_{\theta,q}=\mathcal{B}_{p,q}^{\frac{1}{p}+d-1}, \ \ \text{where } \ \frac{1}{p}=\frac{1-\theta}{p_0}+\frac{\theta}{p_1}.
\end{equation}
The relation (\ref{A0}) will be proved by using the retract argument \cite[Theorem 6.4.2]{bergh2012interpolation}. Briefly, if $X$ and $Y$ are two quasi-Banach spaces, then $X$ is a \textit{retract} of $Y$ if there are bounded linear mappings $\mathcal{J}:X\to Y$ and $\mathcal{K}:Y\to X$ such that $\mathcal{K}\mathcal{J}$ is the identity map on $X$.

\noindent Notice that for every $p_0,p_1\in(0,+\infty)$, $\theta\in(0,1)$ and $q\in(0,+\infty]$,
\begin{equation}\label{A1}
	\big( L^{p_0}(\mathcal{M},\mu),L^{p_1} (\mathcal{M},\mu)\big)_{\theta,q}=L^{p,q}(\mathcal{M},\mu), \ \ \text{where } \ \frac{1}{p}=\frac{1-\theta}{p_0}+\frac{\theta}{p_1}.
\end{equation}
The relation above can be found in \cite[Theorem 5.3.1]{bergh2012interpolation}. The goal is to construct two mappings $\mathcal{J}:B_{p_0}^{\frac{1}{p_0}+d-1}+B_{p_1}^{\frac{1}{p_1}+d-1}\to L^{p_0}(\mathcal{M},\mu)+L^{p_1}(\mathcal{M},\mu)$ and $\mathcal{K}:L^{p_0}(\mathcal{M},\mu)+L^{p_1}(\mathcal{M},\mu)\to B_{p_0}^{\frac{1}{p_0}+d-1}+B_{p_1}^{\frac{1}{p_1}+d-1}$ such that:
\begin{enumerate}[(i)]
	\item $B_{p_0}^{\frac{1}{p_0}+d-1}$ is retract of $L^{p_0}(\mathcal{M},\mu)$, under the mappings $\mathcal{J}:B_{p_0}^{\frac{1}{p_0}+d-1}\to L^{p_0}(\mathcal{M},\mu)$ and $\mathcal{K}:L^{p_0}(\mathcal{M},\mu)\to B_{p_0}^{\frac{1}{p_0}+d-1}$; and
	\item $B_{p_1}^{\frac{1}{p_1}+d-1}$ is a retract of $L^{p_1} (\mathcal{M},\mu)$ under the mappings $\mathcal{J}:B_{p_1}^{\frac{1}{p_1}+d-1}\to L^{p_1} (\mathcal{M},\mu)$ and $\mathcal{K}:L^{p_1} (\mathcal{M},\mu)\to B_{p_1}^{\frac{1}{p_1}+d-1}$.
\end{enumerate}
For let $Hol(\mathbb{D})$ be the space of the holomorphic functions on $\mathbb{D}$ and define the linear operator
\begin{equation*}
	\mathcal{J}\phi=\bigoplus_{n\in\mathbb{N}_0}2^{n(d-1)}\phi*V_n, \ \forall \phi\in Hol(\mathbb{D}),
\end{equation*}
where the polynomials $V_n$ are defined in (\ref{1'}) and (\ref{2}). Then, by the definition of the Besov space $B_{p}^{\frac{1}{p}+d-1}$, $\mathcal{J}$ is an isometry from $B_{p}^{\frac{1}{p}+d-1}$ to $L^p(\mathcal{M},\mu)$. In addition, consider the polynomials
\begin{equation*}
	\tilde{V}_0(z)=V_0(z)+V_1(z), \ \forall z\in\mathbb{T},
\end{equation*}
and, for every $n\in\mathbb{N}$,
\begin{equation*}
	\tilde{V}_n(z)=V_{n-1}(z)+V_n(z)+V_{n+1}(z), \ \forall z\in\mathbb{T}.
\end{equation*}
Notice that $V_n*\tilde{V}_n=V_n$, for every $n\in\mathbb{N}_0$. Now define the linear operator
\begin{equation}\label{K}
	\mathcal{K}\bigoplus_{n\in\mathbb{N}_0}\phi_n=\sum_{n\in\mathbb{N}_0}2^{-n(d-1)}\phi_n*\tilde{V}_n, \ \forall \bigoplus_{n\in\mathbb{N}_0}\phi_n\in L^p(\mathcal{M},\mu),
\end{equation}
which is bounded from $L^p(\mathcal{M},\mu)$ to $B_{p}^{\frac{1}{p}+d-1}$. To see this, it is enough to check that
\begin{equation}\label{eq2_20}
	\sum_{n\in\mathbb{N}_0} 2^{n[1+p(d-1)]}\big\| \sum_{m\in\mathbb{N}_0}2^{-m(d-1)}\phi_m*\tilde{V}_m*V_n \big\|_p^p<+\infty, \ \forall \bigoplus_{n\in\mathbb{N}_0}\phi_n\in L^p(\mathcal{M},\mu).
\end{equation}
Moreover, notice that $\mathcal{KJ}\phi=\phi$, for all $\phi$.

It has been proved that $\mathcal{J}$ is an isometry from $B_{p}^{\frac{1}{p}+d-1}$ to $L^p(\mathcal{M},\mu)$. Therefore, due to (\ref{A1}), if $\phi$ belongs to $\left( B_{p_0}^{\frac{1}{p_0}+d-1},B_{p_1}^{\frac{1}{p_1}+d-1} \right)_{\theta,q}$, then $\mathcal{J}\phi\in L^{p,q}(\mathcal{M},\mu)$, where $p$ is described in (\ref{A0}). According to the definition of $\mathcal{B}_{p,q}^{\frac{1}{p}+d-1}$ (see (\ref{4})), $\phi\in\mathcal{B}_{p,q}^{\frac{1}{p}+d-1}$ and thus,
\begin{equation}\label{A2}
	\left( B_{p_0}^{\frac{1}{p_0}+d-1},B_{p_1}^{\frac{1}{p_1}+d-1} \right)_{\theta,q} \subset\mathcal{B}_{p,q}^{\frac{1}{p}+d-1}.
\end{equation}
On the other hand, let $\phi\in\mathcal{B}_{p,q}^{\frac{1}{p}+d-1}$ or equivalently, $\mathcal{J}\phi\in L^{p,q}(\mathcal{M},\mu)$. Moreover, $\phi=\mathcal{KJ}\phi$ and $\mathcal{K}$ is bounded from $L^p(\mathcal{M},\mu)$ to $B_{p}^{\frac{1}{p}+d-1}$. Therefore, due to (\ref{A1}), $\phi\in\left( B_{p_0}^{\frac{1}{p_0}+d-1},B_{p_1}^{\frac{1}{p_1}+d-1} \right)_{\theta,q}$. This results
\begin{equation}\label{A3}
	\mathcal{B}_{p,q}^{\frac{1}{p}+d-1}\subset\left( B_{p_0}^{\frac{1}{p_0}+d-1},B_{p_1}^{\frac{1}{p_1}+d-1} \right)_{\theta,q}.
\end{equation}
Therefore, (\ref{A2}) and (\ref{A3}) yield (\ref{A0}).

In order to complete the proof, it only remains to verify the validity of (\ref{eq2_20}). For notice that
\begin{equation*}
	\sum_{m\in\mathbb{N}_0}\phi_m*\tilde{V}_m*V_n=\phi_{n-1}*\tilde{V}_{n-1}*V_n+\phi_n*V_n+\phi_{n+1}*\tilde{V}_{n+1}*V_n, \ \forall n\in\mathbb{N}.
\end{equation*}
Thus, for any $p>0$ and every $n\in\mathbb{N}$,
\begin{multline}\label{eq2_19}
	\big\| \sum_{m\in\mathbb{N}_0}2^{-m(d-1)}\phi_m*\tilde{V}_m*V_n \big\|_p^p\lesssim  \|2^{-(n-1)(d-1)}\phi_{n-1}*\tilde{V}_{n-1}*V_n\|_p^p+\|2^{-n(d-1)}\phi_{n}*V_n\|_p^p \\ +\|2^{-(n+1)(d-1)}\phi_{n+1}*\tilde{V}_{n+1}*V_n\|_p^p.
\end{multline}
Observe that if the convolution with $V_n$ is a bounded operator whose norm does not depend on $n$, then (\ref{eq2_19}) becomes
\begin{equation*}
	\big\| \sum_{m\in\mathbb{N}_0}2^{-m(d-1)}\phi_m*\tilde{V}_m*V_n \big\|_p^p\lesssim \|2^{-(n-1)(d-1)}\phi_{n-1}\|_p^p+\|2^{-n(d-1)}\phi_{n}\|_p^p +\|2^{-(n+1)(d-1)}\phi_{n+1}\|_p^p.
\end{equation*}
Then it will not be difficult to see that
\begin{equation*}
	\sum_{n\in\mathbb{N}_0} 2^{n[1+p(d-1)]}\big\| \sum_{m\in\mathbb{N}_0}2^{-m(d-1)}\phi_m*\tilde{V}_m*V_n \big\|_p^p\lesssim\big\|\bigoplus_{n\in\mathbb{N}_0}\phi_n\big\|_p^p, \ \forall \bigoplus_{n\in\mathbb{N}_0}\phi_n\in L^p(\mathcal{M},\mu),
\end{equation*}
which actually proves (\ref{eq2_20}).

So it remains to prove that convolution with $V_n$ is a bounded operator whose norm does not depend on $n$. With a closer look, it only needs to prove that $\{V_n\}_{n\in\mathbb{N}_0}$ forms a sequence of uniformly bounded multipliers. This requires to split the range of $p$ in two intervals, $(0,1]$ and $(1,+\infty)$. The first case requires the extra condition of analyticity. Namely, $\{V_n\}_{n\in\mathbb{N}_0}$ should be a uniformly bounded sequence of $H^p$ multipliers. Respectively, when $p>1$, $\{V_n\}_{n\in\mathbb{N}_0}$ should be a uniformly bounded sequence of $L^p$ multipliers.

Regarding the case where $p\in(0,1]$, notice that the analyticity condition does not cause any harm, since, for every $n\in\mathbb{N}$, $V_n$ acts (as a multiplier) on the analytic projection of $L^p$. So the approach with Hardy multipliers is allowed. For we use the following theorems. Theorem \ref{thm_4} can be found in \cite[Th\'{e}or\`{e}me 1]{stein1966classes} and Theorem \ref{thm5'} in \cite[Theorem 5.1]{chen1998multiplier}
\begin{thm}\label{thm_4}
	Let $p\in(0,1]$ and consider the Hardy space $H^p(\mathbb{R})$. Let $k\in\mathbb{N}$ such that $k^{-1}<p$ and $\rho:\mathbb{R}_+\to\mathbb{C}$ which satisfies the following conditions:
	\begin{enumerate}[(i)]
		\item $|\rho(t)|\leq A, \ \forall t\in\mathbb{R}_+$;
		\item $\rho\in C^k(\mathbb{R}_+)$ and
		\begin{equation*}
			\int_R^{2R}|\rho^{(l)}(t)|^2\dd{t}\leq A R^{-2l+1}, \ \ \forall R>0, \ \forall l=1,\dots,k;
		\end{equation*}
	\end{enumerate}
	where $A$ is a positive constant. Then $\rho$ is a multiplier on $H^p(\mathbb{R})$.
\end{thm}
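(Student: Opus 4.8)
The plan is to prove this via the atomic/molecular characterisation of $H^p(\mathbb{R})$ together with a Littlewood--Paley dyadic splitting of the multiplier. Since $\rho$ is defined only on $\mathbb{R}_+$, it acts on the holomorphic Hardy space, i.e.\ on functions $f$ with $\mathrm{supp}(\mathcal{F}f)\subset(0,\infty)$, and for that space the real-variable atomic decomposition is available: every $f\in H^p$ is a $p$-summable combination $f=\sum_i\lambda_i a_i$ of $p$-atoms $a_i$ (each supported in an interval $I_i$, with $\|a_i\|_\infty\le|I_i|^{-1/p}$ and $\int t^m a_i(t)\dd{t}=0$ for $0\le m\le\left[\tfrac1p\right]-1$), with $\sum_i|\lambda_i|^p\lesssim\|f\|_{H^p}^p$. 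Since $\|\cdot\|_{H^p}^p$ is $p$-subadditive, it suffices to find a constant $C=C(p,k,A)$ with $\|T_\rho a\|_{H^p}\le C$ for every such atom $a$, where $T_\rho$ is the operator with $\mathcal{F}(T_\rho a)=\rho\cdot\mathcal{F}a$ on $(0,\infty)$.

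First I would fix a smooth multiplicative partition of unity $1=\sum_{j\in\mathbb{Z}}\psi(2^{-j}t)$ on $\mathbb{R}_+$, with $\psi$ supported in $[\tfrac12,2]$, and set $\rho_j(t)=\rho(t)\psi(2^{-j}t)$, $K_j=\mathcal{F}^*\rho_j$, so that $T_\rho=\sum_j K_j*\,\cdot\,$. The Leibniz rule and hypotheses (i)--(ii) give $\int_{\mathbb{R}}|\rho_j^{(l)}(t)|^2\dd{t}\lesssim A\,2^{j(1-2l)}$ for $0\le l\le k$; since $\mathcal{F}(x^l K_j)=c_l\,\rho_j^{(l)}$ with $|c_l|=(2\pi)^{-l}$, Plancherel gives $\||x|^l K_j\|_{L^2}\lesssim A^{1/2}\,2^{j(1/2-l)}$, and a Cauchy--Schwarz argument on dyadic shells then yields $\int_{\mathbb{R}}|K_j(x)|\,\langle 2^j x\rangle^l\,\dd{x}\lesssim A^{1/2}$ for integers $l<k$. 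Thus each $K_j$ is an $L^1$-normalised bump at scale $2^{-j}$ with polynomial decay of order essentially $k$, and these are the only properties of $\rho$ that enter. The heart of the matter is the single-atom estimate: after translation and dilation I may assume $a$ supported in $I=[-\tfrac{\delta}{2},\tfrac{\delta}{2}]$ with $\|a\|_{L^2}\le\delta^{1/2-1/p}$, and I write $T_\rho a=\sum_j K_j*a$, splitting the sum at $2^j\delta\sim1$. For $2^j\delta>1$ I use $\|K_j*a\|_{L^2}\le\|\rho_j\|_\infty\|a\|_{L^2}$ with the scale-$2^{-j}$ localisation of $K_j$; for $2^j\delta\le1$ I expand $K_j(x-y)$ in its Taylor polynomial of degree $N:=\left[\tfrac1p\right]-1$ about $x$ and use the vanishing moments of $a$ to gain a factor $(2^j\delta)^{N+1}$. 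Under the hypothesis $k^{-1}<p$ one has $\left[\tfrac1p\right]+1\le k$, so the $L^2$-bounds on $K_j$ up to $l=k$ derivatives are exactly enough (with a little room to spare) to sum these pieces and conclude that $T_\rho a$ is a bounded multiple, of size $\lesssim_{p,k}A^{1/2}$, of an $H^p$-\emph{molecule} adapted to $I$ at scale $\delta$ --- an $L^2$ function with enough decay away from $I$ and with vanishing moments up to order $N$.

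Finally, by the molecular characterisation of $H^p$ (Taibleson--Weiss, Coifman--Weiss), every $H^p$-molecule lies in $H^p$ with $\|\cdot\|_{H^p}$ bounded by its molecular norm; hence $\|T_\rho a\|_{H^p}\lesssim_{p,k}A^{1/2}$ uniformly over atoms $a$, and combining this with the atomic decomposition and $p$-subadditivity gives $\|T_\rho f\|_{H^p}\lesssim_{p,k}A^{1/2}\|f\|_{H^p}$, first on a dense class and then on all of $H^p(\mathbb{R})$ by density --- that is, $\rho$ is a multiplier on $H^p(\mathbb{R})$. The main obstacle is the middle step: carrying out the atom-to-molecule estimate with the exponents tracked correctly, in particular checking that $k$ derivatives controlled only in the $L^2$-averaged sense of hypothesis (ii) --- so that one must work through Plancherel and Cauchy--Schwarz rather than pointwise kernel bounds --- still suffice, precisely when $k^{-1}<p$, to produce both the decay and the $\left[\tfrac1p\right]$ vanishing moments that the molecular theory requires. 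An alternative, closer to Stein's original route, is to use the Littlewood--Paley $g$-function characterisation $\|f\|_{H^p}\sim\|g(f)\|_{L^p}$, valid for all $p>0$, and to dominate $g(T_\rho f)$ pointwise by a fixed multiple of a $g^*_\lambda$-function of $f$ using the same dyadic kernel bounds; the condition $k^{-1}<p$ reappears there as the admissible range of the auxiliary exponent $\lambda$.
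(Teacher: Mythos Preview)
The paper does not give its own proof of this statement: Theorem~\ref{thm_4} is quoted in the appendix purely as a citation to \cite[Th\'eor\`eme~1]{stein1966classes}, with no argument supplied. So there is nothing in the paper to compare your proposal against.

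That said, your plan is a sound modern proof and, as you yourself note at the end, it is \emph{not} Stein's route. Stein's 1966 argument predates the Coifman--Weiss atomic/molecular machinery; it works instead through the Littlewood--Paley $g$- and $g^*_\lambda$-functions, showing directly that the kernel estimates coming from (i)--(ii) allow one to dominate $g(T_\rho f)$ by $g^*_\lambda(f)$ with $\lambda$ in the admissible range forced by $k^{-1}<p$, and then invoking $\|g^*_\lambda(f)\|_{L^p}\lesssim\|f\|_{H^p}$. Your atomic approach is conceptually cleaner and localises the difficulty to a single-atom estimate, at the cost of importing the molecular characterisation as a black box; Stein's approach is more self-contained but requires the full $g^*_\lambda$ theory. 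Either way the arithmetic is the same: $k>1/p$ is exactly what is needed, in your language to push the Taylor expansion to order $N=[1/p]-1$ with one derivative to spare for decay, and in Stein's language to place $\lambda$ above the threshold where $g^*_\lambda$ is $L^p$-bounded.

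One point worth tightening in your write-up: since $\rho$ lives only on $\mathbb{R}_+$, the relevant $H^p(\mathbb{R})$ is the holomorphic Hardy space (Fourier support in $[0,\infty)$), and the passage to real-variable atoms goes through the identification of that space with the real $H^p$ via the Riesz projection. You allude to this but do not spell it out; it is routine, but the vanishing-moment bookkeeping for $T_\rho a$ relies on it, so it deserves a sentence.
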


\begin{thm}\label{thm5'}
	Let $\rho:\mathbb{R}\to\mathbb{C}$ be a continuous function that gives rise to a multiplier on the Hardy space $H^p(\mathbb{R})$, for some $p\in(0,1]$. Then, for any $t>0$, the sequence $\rho_t=\{\rho(tn)\}_{n\in\mathbb{Z}}$ is a multiplier on the Hardy space $H^p(\mathbb{T})$ and furthermore, the multiplier norm $\left\|\rho_t\right\|_{\mathscr{M}(H^p(\mathbb{T}))}$ is uniformly bounded with respect to $t$.
\end{thm}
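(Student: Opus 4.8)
The plan is a de Leeuw--type transference argument: I would deduce the periodic multiplier bound on $\mathbb{T}$ from the Euclidean one on $\mathbb{R}$ by modulating periodic functions by a slowly varying bump, after two preliminary reductions. \emph{First}, I would reduce to $t=1$. For $t>0$ the dilation $f\mapsto f(t\,\cdot)$ is, up to the scalar $t^{-1/p}$, an isometry of $H^p(\mathbb{R})$, and it conjugates the Fourier multiplier with symbol $\rho$ into the one with symbol $\rho(t\,\cdot)$; hence $\|\rho(t\,\cdot)\|_{\mathscr{M}(H^p(\mathbb{R}))}=\|\rho\|_{\mathscr{M}(H^p(\mathbb{R}))}$. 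Since $\rho_t=\{\rho(tn)\}_{n\in\mathbb{Z}}$ is exactly the restriction of $\rho(t\,\cdot)$ to $\mathbb{Z}$, it suffices to produce a constant $C_p$, depending only on $p$, with $\|\{\sigma(n)\}_{n\in\mathbb{Z}}\|_{\mathscr{M}(H^p(\mathbb{T}))}\le C_p\,\|\sigma\|_{\mathscr{M}(H^p(\mathbb{R}))}$ for every continuous $\mathbb{R}$-multiplier $\sigma$; applying this to $\sigma=\rho(t\,\cdot)$ then gives the theorem with a bound uniform in $t$.

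\emph{Second}, I would reduce to smooth symbols. Fix $\psi\in\mathcal{S}(\mathbb{R})$ with $\psi\ge0$, $\int\psi=1$, and set $\sigma_\eta=\sigma*\psi_\eta$, $\psi_\eta(x)=\eta^{-1}\psi(x/\eta)$. Then $\sigma_\eta\in C^\infty(\mathbb{R})$ and $\|\sigma_\eta\|_{\mathscr{M}(H^p(\mathbb{R}))}\le\|\sigma\|_{\mathscr{M}(H^p(\mathbb{R}))}$, since $\sigma_\eta(D)$ is an average over translates of $\sigma(D)$ and translation is an isometry of $H^p(\mathbb{R})$. By uniform continuity of $\sigma$ on compacta, $\sigma_\eta(n)\to\sigma(n)$ for each $n$; so for a fixed analytic trigonometric polynomial $f$ one has $T_{\{\sigma_\eta(n)\}}f\to T_{\{\sigma(n)\}}f$ in $H^p(\mathbb{T})$, and taking $\liminf$ and then the supremum over such $f$ shows it is enough to prove the inequality of the first step for $\sigma\in C^\infty(\mathbb{R})$, with $C_p$ independent of the symbol. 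I assume this henceforth.

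\emph{Third}, the transference itself. It is enough to treat trigonometric polynomials $f(\theta)=\sum_{n=1}^{N}c_n e^{2\pi i n\theta}$ with vanishing constant term, the general $H^p(\mathbb{T})$ case following by density together with $|\widehat f(0)|\le\|f\|_{H^p(\mathbb{T})}$ (subharmonicity of $|\cdot|^p$) and the $p$-triangle inequality. Choose $g\in\mathcal{S}(\mathbb{R})$ with $\widehat g$ supported in $(-\tfrac14,\tfrac14)$ and $\widehat g(0)=1$, put $g_\epsilon(x)=g(\epsilon x)$, and set $F_\epsilon=f\,g_\epsilon$ with $f$ viewed as a $1$-periodic function on $\mathbb{R}$; then $\widehat{F_\epsilon}=\sum_{n=1}^N c_n\,\epsilon^{-1}\widehat g\big(\epsilon^{-1}(\,\cdot-n)\big)$ is supported in $\bigcup_{n=1}^N(n-\tfrac\epsilon4,n+\tfrac\epsilon4)\subset(0,\infty)$ for $\epsilon<1$, so $F_\epsilon\in H^p(\mathbb{R})$. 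A periodization argument (using that $g_\epsilon$ is nearly constant over each unit period, together with the maximal-function description of $H^p$) gives, as $\epsilon\to0^+$, $\epsilon\,\|F_\epsilon\|_{H^p(\mathbb{R})}^p\to c_{g,p}\,\|f\|_{H^p(\mathbb{T})}^p$ and $\epsilon\,\|(T_{\{\sigma(n)\}}f)g_\epsilon\|_{H^p(\mathbb{R})}^p\to c_{g,p}\,\|T_{\{\sigma(n)\}}f\|_{H^p(\mathbb{T})}^p$, for some $c_{g,p}>0$. On the $n$-th packet $(n-\tfrac\epsilon4,n+\tfrac\epsilon4)$ the symbol $\sigma$ differs from $\sigma(n)$ by a $C^\infty$ function which, after a smooth cut-off to a slightly larger interval, satisfies the rescaled H\"ormander condition with constant $\lesssim\epsilon\sup_{|\xi-n|\le\epsilon/2}|\sigma'(\xi)|$; by Theorem~\ref{thm_4} this is an $H^p(\mathbb{R})$-multiplier of norm $o(1)$, and summing the $N$ packet contributions (legitimate since $H^p(\mathbb{R})$ is a $p$-Banach space, so $\|\sum u_j\|_{H^p}^p\le\sum\|u_j\|_{H^p}^p$) yields $\|\sigma(D)F_\epsilon-(T_{\{\sigma(n)\}}f)g_\epsilon\|_{H^p(\mathbb{R})}^p=\epsilon^{-1}o(1)$. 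Combining with $\|\sigma(D)F_\epsilon\|_{H^p(\mathbb{R})}\le\|\sigma\|_{\mathscr{M}(H^p(\mathbb{R}))}\|F_\epsilon\|_{H^p(\mathbb{R})}$, multiplying by $\epsilon$, and letting $\epsilon\to0^+$ in the two asymptotics gives $\|T_{\{\sigma(n)\}}f\|_{H^p(\mathbb{T})}\le\|\sigma\|_{\mathscr{M}(H^p(\mathbb{R}))}\|f\|_{H^p(\mathbb{T})}$, as required.

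The main obstacle is this last step when $p<1$: because $H^p$ is only a quasi-Banach space, the ``$\sigma$ is almost constant on each packet'' heuristic must be made into genuine $H^p(\mathbb{R})$-multiplier estimates rather than $L^\infty$ bounds on the symbol --- which is precisely why Theorem~\ref{thm_4} (Stein) and the preliminary mollification are needed --- and the periodization identities must be run with the maximal-function (or atomic) description of $H^p$ rather than a naive $L^p$ computation. Keeping all error terms $o(\epsilon^{-1})$ uniformly in $\epsilon$, and with a final constant independent of the symbol as required after the mollification reduction, is the delicate point.
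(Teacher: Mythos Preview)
The paper does not supply its own proof of this statement: Theorem~\ref{thm5'} is quoted from the literature (Chen--Fan, \cite[Theorem~5.1]{chen1998multiplier}) and used as a black box in Appendix~\ref{A}. So there is no in-paper argument to compare against.

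That said, your sketch follows the de~Leeuw transference paradigm that underlies results of this type, and the overall strategy is sound: reduce to $t=1$ by dilation invariance of the $H^p(\mathbb{R})$-multiplier norm, mollify the symbol so as to work with smooth $\sigma$, lift an analytic trigonometric polynomial $f$ to $F_\epsilon=f\,g_\epsilon\in H^p(\mathbb{R})$ with Fourier support in disjoint packets around the positive integers, and compare $\sigma(D)F_\epsilon$ with the packet-constant multiplier. Your use of Theorem~\ref{thm_4} to control the packet error $(\sigma(\xi)-\sigma(n))\chi$ is legitimate because each such correction is supported well away from the origin, so the Mihlin-type hypotheses are trivially verified with constant $O(\epsilon\sup|\sigma'|)$; and since the number $N$ of packets is fixed by $f$, the $p$-subadditivity bound you invoke is enough to sum them.

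The one place I would flag as needing genuine care is the pair of periodization limits $\epsilon\|F_\epsilon\|_{H^p(\mathbb{R})}^p\to c_{g,p}\|f\|_{H^p(\mathbb{T})}^p$ and its analogue for $T_{\{\sigma(n)\}}f$. For $p\le 1$ these cannot be read off from an $L^p$ calculation and must go through the grand maximal function (or atomic) description of $H^p$; you acknowledge this, but in a full proof this is where most of the work lies. Provided that step is carried out, your argument yields the theorem with constant $C_p=1$, which is exactly what the cited reference gives.
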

\noindent Notice that the polynomials $V_n$ are constructed by scaling the function $v$ (see \S\ref{BC} for definitions). Therefore, according to Theorem \ref{thm5'}, it is enough to prove that $v$ defines an $H^p(\mathbb{R})$ multiplier. The latter is achieved by applying Theorem \ref{thm_4}.

Finally, if $p>1$, then the following theorems are needed. Theorem \ref{thm4} can be found in \cite[Theorem 6.1.6]{bergh2012interpolation}, Theorem \ref{thm5} in \cite[Theorem 6.1.3]{bergh2012interpolation} and Theorem \ref{thm_5} in \cite[Theorem 4.3.7]{9781493911936}.
\begin{thm}[Mikhlin's Multiplier Theorem]\label{thm4}
	Let $\rho:\mathbb{R}\to\mathbb{C}$ be a function which satisfies
	\begin{equation*}
		|\rho^{(n)}(x)|\leq A\left<x\right>^{-n}, \ \forall x\in\mathbb{R}, \ n=0,1.
	\end{equation*}
	Then $\rho\in\mathscr{M}_p(\mathbb{R})$, for every $p\in(1,+\infty)$, and, more precisely, there exists a positive constant $C_p$ which depends only on $p$ such that
	\begin{equation*}
		\left\|\rho\right\|_{\mathscr{M}_p}\leq C_p A.
	\end{equation*}
\end{thm}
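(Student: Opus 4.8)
The plan is to prove Theorem~\ref{thm4} by Calder\'on--Zygmund theory, in the form adapted to the fact that in one dimension a single derivative of the symbol suffices. Write $T_\rho f=\mathcal{F}^{*}(\rho\,\mathcal{F}f)$ for the associated multiplier operator, initially on the Schwartz class. The easy ingredient is the $L^2$ bound: by Plancherel, $\|T_\rho f\|_{2}=\|\rho\,\mathcal{F}f\|_{2}\le\|\rho\|_\infty\|f\|_{2}\le A\|f\|_{2}$, so $T_\rho$ extends to $L^2$ with norm $\le A$. The heart of the matter is a regularity estimate on the convolution kernel $K=\mathcal{F}^{*}\rho$, namely H\"ormander's condition
\begin{equation*}
	\int_{|x|>2|y|}\left| K(x-y)-K(x) \right|\dd{x}\lesssim A,\qquad\forall y\neq 0 .
\end{equation*}
Granting this, the Calder\'on--Zygmund theorem gives that $T_\rho$ is of weak type $(1,1)$ with constant $\lesssim A$; Marcinkiewicz interpolation with the $L^2$ bound yields $\|\rho\|_{\mathscr{M}_p}\le C_pA$ for $p\in(1,2]$; and for $p\in[2,+\infty)$ one passes to the adjoint $T_\rho^{*}=T_{\overline\rho}$, noting that $\overline\rho$ satisfies exactly the same hypotheses, so $\|\rho\|_{\mathscr{M}_p}=\|\overline\rho\|_{\mathscr{M}_{p'}}\le C_{p'}A$.

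It remains to establish the kernel estimate, and this is where the hypothesis is used. Fix a Littlewood--Paley partition: $\varphi\in C^\infty_c(\mathbb{R})$ supported in $\{\tfrac12\le|\xi|\le2\}$ with $\sum_{j\in\mathbb{Z}}\varphi(2^{-j}\xi)=1$ for $\xi\neq0$, put $\rho_j(\xi)=\rho(\xi)\varphi(2^{-j}\xi)$ and $K_j=\mathcal{F}^{*}\rho_j$, so $K=\sum_j K_j$. On $\mathrm{supp}\,\rho_j$ one has $|\xi|\sim 2^j$, $|\rho(\xi)|\le A$ and $|\rho'(\xi)|\lesssim A2^{-j}$, hence by Cauchy--Schwarz over an interval of length $\sim2^j$, $\|\rho_j\|_{2}\lesssim A2^{j/2}$ and $\|\rho_j'\|_{2}\lesssim A2^{-j/2}$. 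Since $\mathcal{F}(x K_j)$ equals, up to a constant, $\rho_j'$, Plancherel gives $\|K_j\|_{2}\lesssim A2^{j/2}$ and $\| \, |x|K_j \|_{2}\lesssim A2^{-j/2}$; splitting $\int|K_j|$ at $|x|=2^{-j}$ and applying Cauchy--Schwarz with weight $1$ on the inner region and $|x|^{-2}$ on the outer one yields the uniform bound $\|K_j\|_{1}\lesssim A$.

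For the H\"ormander condition I would split the sum over $j$ according to whether $2^{j}|y|\le1$ or $2^{j}|y|>1$. In the first range the mean value theorem gives $\int_{|x|>2|y|}|K_j(x-y)-K_j(x)|\dd{x}\le|y|\int_{|u|>|y|}|K_j'(u)|\dd{u}$; since $\mathcal{F}(xK_j')$ is, up to a constant, $\rho_j+\xi\rho_j'$, the annular $L^2$ estimates give $\| \, |x|K_j' \|_2\lesssim A2^{j/2}$, so Cauchy--Schwarz with weight $|x|^{-2}$ bounds the right-hand side by $\lesssim A(2^{j}|y|)^{1/2}$. In the second range one instead bounds $\int_{|x|>2|y|}(|K_j(x)|+|K_j(x-y)|)\dd{x}\lesssim A(2^{j}|y|)^{-1/2}$ using Cauchy--Schwarz with weight $|x|^{-2}$ and $\| \, |x|K_j \|_{2}\lesssim A2^{-j/2}$. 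Each of the two resulting series in $j$ is geometric and sums to $\lesssim A$. The main obstacle is precisely this last step: with only one derivative of $\rho$ the naive pointwise kernel bound $|K(x)|\lesssim A|x|^{-1}$ is too weak to feed into Calder\'on--Zygmund theory, so one must extract the $L^2$-type H\"ormander estimates from the single-derivative hypothesis and exploit the near-orthogonality of dyadic frequency blocks — which is exactly why the one-dimensional statement requires control of derivatives only up to order $n=1$.
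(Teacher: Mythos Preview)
The paper does not actually prove Theorem~\ref{thm4}: it is quoted as a classical result, with a reference to \cite[Theorem~6.1.6]{bergh2012interpolation}, and is only used as a black box to check that the building block $v$ of the Littlewood--Paley decomposition is an $L^p(\mathbb{R})$ multiplier. So there is no ``paper's own proof'' to compare against.

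That said, your proposal is a correct and standard route to the one-dimensional Mikhlin--H\"ormander theorem. The $L^2$ bound via Plancherel, the dyadic decomposition $\rho=\sum_j\rho_j$, the annular $L^2$ estimates $\|\rho_j\|_2\lesssim A2^{j/2}$ and $\|\rho_j'\|_2\lesssim A2^{-j/2}$, the resulting kernel bounds $\|K_j\|_2\lesssim A2^{j/2}$, $\|\,|x|K_j\|_2\lesssim A2^{-j/2}$, $\|\,|x|K_j'\|_2\lesssim A2^{j/2}$, and the split of the H\"ormander integral at $2^j|y|=1$ with geometric summation in $j$ are exactly the ingredients of the textbook proof (this is essentially the argument in Bergh--L\"ofstr\"om and in Stein's \emph{Singular Integrals}). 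The passage from weak $(1,1)$ plus $L^2$ to $\mathscr{M}_p$ for $1<p\le2$ by Marcinkiewicz, and then to $p\ge2$ by duality via $T_\rho^{*}=T_{\overline{\rho}}$, is also correct. The only points worth making explicit in a full write-up are that $K=\mathcal{F}^{*}\rho$ is a priori only a tempered distribution, so the H\"ormander condition is verified for its restriction to $\mathbb{R}\setminus\{0\}$ via the absolutely convergent decomposition $\sum_j K_j$ there, and that the hypothesis uses $\langle x\rangle^{-1}$ rather than $|x|^{-1}$, which your dyadic estimates accommodate since for $j<0$ one has $\langle\xi\rangle\sim1$ on $\mathrm{supp}\,\rho_j$ and the derivative of the cut-off supplies the needed factor $2^{-j}$.
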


\begin{thm}\label{thm5}
	Let $\rho:\mathbb{R}\to\mathbb{C}$ belong to $\mathscr{M}_p(\mathbb{R})$. Then, for any $t\in\mathbb{R}\setminus\{0\}$, the function $\rho_t:\mathbb{R}\to\mathbb{C}$ which maps $x$ to $\rho(tx)$ belongs to $\mathscr{M}_p(\mathbb{R})$ with $\left\|\rho_t\right\|_{\mathscr{M}_p}\leq\left\|\rho\right\|_{\mathscr{M}_p}$.
\end{thm}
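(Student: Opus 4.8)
The plan is to realise the multiplier operator attached to the dilate $\rho_t$ as a conjugate, by an $L^p$-dilation, of the operator attached to $\rho$, and to observe that the dilation factors cancel exactly. First I would fix $t\in\mathbb{R}\setminus\{0\}$ and introduce the dilation operator $D_t$ on $L^p(\mathbb{R})$ given by $(D_tf)(x)=f(tx)$. A change of variables gives $\|D_tf\|_{L^p}=|t|^{-1/p}\|f\|_{L^p}$, so $D_t$ is bounded and boundedly invertible on $L^p(\mathbb{R})$, with $D_t^{-1}=D_{1/t}$ and $\|D_t\|_{L^p\to L^p}\,\|D_{1/t}\|_{L^p\to L^p}=|t|^{-1/p}|t|^{1/p}=1$.

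Next I would record the interaction of $D_t$ with the Fourier transform (\ref{F}): a change of variables in the defining integral yields $\mathcal{F}(D_tf)=|t|^{-1}D_{1/t}(\mathcal{F}f)$ for all Schwartz $f$. Writing $T_\rho=\mathcal{F}^*\rho\,\mathcal{F}$ for the operator of Fourier multiplication by $\rho$ (defined on the Schwartz class, so that all manipulations are literal), I would substitute this identity twice and use that $D_t$ is multiplicative, $D_t(gh)=(D_tg)(D_th)$, to obtain
\begin{equation*}
	D_{1/t}\,T_\rho\,D_t=\mathcal{F}^*\rho_t\,\mathcal{F}=T_{\rho_t}, \ \ \text{where } \ \rho_t(\xi)=\rho(t\xi);
\end{equation*}
here the scalar $|t|^{-1}$ from $\mathcal{F}(D_tf)$ cancels the $|t|$ from $\mathcal{F}(D_{1/t}g)$, and the two nested dilations of $\mathcal{F}f$ collapse since $D_tD_{1/t}=\mathrm{id}$.

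Finally I would read off the estimate: from $T_{\rho_t}=D_{1/t}T_\rho D_t$ on the dense Schwartz class,
\begin{equation*}
	\|T_{\rho_t}\|_{L^p\to L^p}\leq\|D_{1/t}\|_{L^p\to L^p}\,\|T_\rho\|_{L^p\to L^p}\,\|D_t\|_{L^p\to L^p}=|t|^{1/p}\,\|T_\rho\|_{L^p\to L^p}\,|t|^{-1/p}=\|T_\rho\|_{L^p\to L^p},
\end{equation*}
so $T_{\rho_t}$ extends to a bounded operator on $L^p(\mathbb{R})$ and $\|\rho_t\|_{\mathscr{M}_p}\leq\|\rho\|_{\mathscr{M}_p}$, which is the claim; applying the same inequality with $t$ replaced by $1/t$ in fact forces equality. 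There is no genuine obstacle in the argument — the only point requiring a little care is the normalisation constant in $\mathcal{F}(D_tf)$ under the convention (\ref{F}) and the verification that the two powers of $|t|$ cancel, which they do as shown above.
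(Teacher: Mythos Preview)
Your argument is correct and is precisely the standard dilation-conjugation proof: the identity $T_{\rho_t}=D_{1/t}T_\rho D_t$ together with $\|D_t\|_{L^p\to L^p}\|D_{1/t}\|_{L^p\to L^p}=1$ gives the bound immediately. The paper does not supply its own proof of this statement but simply quotes it from \cite[Theorem 6.1.3]{bergh2012interpolation}, where the same dilation argument is used; so your proposal matches the intended proof.
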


\begin{thm}\label{thm_5}
	Let $\rho:\mathbb{R}\to\mathbb{C}$ be a continuous function such that $\rho\in\mathscr{M}_p(\mathbb{R})$, for some $p\in(1,+\infty)$. Then, for any $t>0$, the sequence $\rho_t=\{\rho(tn)\}_{n\in\mathbb{Z}}$ belongs to $\mathscr{M}_p(\mathbb{T})$ and moreover,
	\begin{equation*}
		\sup_{t>0}\left\|\rho_t\right\|_{\mathscr{M}_p(\mathbb{T})}\leq\left\|\rho\right\|_{\mathscr{M}_p(\mathbb{R})}.
	\end{equation*}
\end{thm}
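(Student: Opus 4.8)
The statement is the classical de Leeuw transference theorem for Fourier multipliers in the periodic (discrete) setting, and the plan is to reproduce the standard argument behind \cite[Theorem 4.3.7]{9781493911936}: realise a trigonometric polynomial on $\mathbb{T}$, after a dilation, as a limit of Schwartz functions on $\mathbb{R}$ whose Fourier transforms concentrate in shrinking neighbourhoods of the integers, apply the $L^p(\mathbb{R})$ multiplier there, and pass to the limit. The continuity of $\rho$ is precisely what makes the limit of the multiplied function identifiable.

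First I would reduce to $t=1$. Since $\rho$ is continuous and, by Theorem~\ref{thm5}, $\|\rho(t\,\cdot)\|_{\mathscr{M}_p(\mathbb{R})}\le\|\rho\|_{\mathscr{M}_p(\mathbb{R})}$ for every $t>0$, it suffices to prove that a continuous $\sigma\in\mathscr{M}_p(\mathbb{R})$ yields $\{\sigma(n)\}_{n\in\mathbb{Z}}\in\mathscr{M}_p(\mathbb{T})$ with multiplier norm at most $\|\sigma\|_{\mathscr{M}_p(\mathbb{R})}$; applying this to $\sigma=\rho(t\,\cdot)$ and taking the supremum over $t>0$ gives the theorem. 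By density of trigonometric polynomials in $L^p(\mathbb{T})$ ($1<p<\infty$), it is then enough to prove $\|Q\|_{L^p(\mathbb{T})}\le\|\sigma\|_{\mathscr{M}_p(\mathbb{R})}\|P\|_{L^p(\mathbb{T})}$ for each pair $P(x)=\sum_{|n|\le N}c_ne^{2\pi inx}$, $Q(x)=\sum_{|n|\le N}\sigma(n)c_ne^{2\pi inx}$.

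Next I would fix $\phi\in\mathcal{S}(\mathbb{R})$ whose Fourier transform $\mathcal{F}\phi$ is a smooth function supported in $(-\tfrac{1}{2},\tfrac{1}{2})$, normalised so that $\int_{\mathbb{R}}|\phi|^{p}=1$, and for $\epsilon\in(0,1]$ put $f_\epsilon(x)=\phi(\epsilon x)P(x)$; then $\mathcal{F}f_\epsilon=\sum_{|n|\le N}c_n\epsilon^{-1}(\mathcal{F}\phi)\big((\cdot-n)/\epsilon\big)$ is supported in the pairwise disjoint intervals $I_n=(n-\tfrac{\epsilon}{2},n+\tfrac{\epsilon}{2})$. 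Writing $\sigma=\sigma(n)+\big(\sigma-\sigma(n)\big)$ on $I_n$, the multiplier operator $T_\sigma$ on $L^p(\mathbb{R})$ splits as $T_\sigma f_\epsilon=\phi(\epsilon\,\cdot)\,Q+r_\epsilon$, where $\mathcal{F}r_\epsilon=\sum_{n}c_n\epsilon^{-1}\big(\sigma-\sigma(n)\big)(\mathcal{F}\phi)\big((\cdot-n)/\epsilon\big)$, and the bound $\|T_\sigma f_\epsilon\|_p\le\|\sigma\|_{\mathscr{M}_p(\mathbb{R})}\|f_\epsilon\|_p$ gives $\|\phi(\epsilon\,\cdot)Q\|_p\le\|\sigma\|_{\mathscr{M}_p(\mathbb{R})}\|f_\epsilon\|_p+\|r_\epsilon\|_p$ for every $\epsilon$. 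The elementary limit $\int_{\mathbb{R}}\psi(y)h(y/\epsilon)\,dy\to\big(\int_{\mathbb{R}}\psi\big)\big(\int_0^1 h\big)$ as $\epsilon\to0^+$, valid for $\psi\in L^1(\mathbb{R})$ and $h$ bounded $1$-periodic, applied with $\psi=|\phi|^p$ and $h=|P|^p$, resp. $h=|Q|^p$, yields $\epsilon^{1/p}\|f_\epsilon\|_p\to\|P\|_{L^p(\mathbb{T})}$ and $\epsilon^{1/p}\|\phi(\epsilon\,\cdot)Q\|_p\to\|Q\|_{L^p(\mathbb{T})}$. Thus everything reduces to showing $\epsilon^{1/p}\|r_\epsilon\|_p\to0$; granting that and letting $\epsilon\to0^+$ in the displayed inequality gives $\|Q\|_{L^p(\mathbb{T})}\le\|\sigma\|_{\mathscr{M}_p(\mathbb{R})}\|P\|_{L^p(\mathbb{T})}$, as required.

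The error estimate is the only genuinely delicate point, and it is exactly where the continuity of $\rho$ enters. Substituting $\xi=n+\epsilon\eta$, the $n$-th term of $r_\epsilon$ is $c_ne^{2\pi inx}(\mathcal{F}h_{n,\epsilon})(-\epsilon x)$ with $h_{n,\epsilon}(\eta)=\big(\sigma(n+\epsilon\eta)-\sigma(n)\big)(\mathcal{F}\phi)(\eta)$, so its $L^p(\mathbb{R})$ norm equals $|c_n|\,\epsilon^{-1/p}\|\mathcal{F}h_{n,\epsilon}\|_{L^p(\mathbb{R})}$. Since $\sigma$ is uniformly continuous on the compact set $\bigcup_{|n|\le N}[n-\tfrac{1}{2},n+\tfrac{1}{2}]$, the quantity $\delta(\epsilon):=\sup\{|\sigma(\xi)-\sigma(n)|:|n|\le N,\ \xi\in I_n\}$ tends to $0$, whence $|h_{n,\epsilon}|\le\delta(\epsilon)|\mathcal{F}\phi|$ pointwise and $\|h_{n,\epsilon}\|_{L^q}\le\delta(\epsilon)\|\mathcal{F}\phi\|_{L^q}$ for every $q$; Hausdorff--Young (for $1<p\le2$), or interpolation between $\|\mathcal{F}h_{n,\epsilon}\|_{L^2}=\|h_{n,\epsilon}\|_{L^2}$ and $\|\mathcal{F}h_{n,\epsilon}\|_{L^\infty}\le\|h_{n,\epsilon}\|_{L^1}$ (for $2\le p<\infty$), then gives $\|\mathcal{F}h_{n,\epsilon}\|_{L^p(\mathbb{R})}\lesssim\delta(\epsilon)$ with an implied constant depending only on $\phi$ and $p$. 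Summing over the finitely many $n$ yields $\epsilon^{1/p}\|r_\epsilon\|_p\lesssim\big(\sum_{|n|\le N}|c_n|\big)\delta(\epsilon)\to0$. The main obstacle is therefore this control of $r_\epsilon$: without the continuity assumption on $\rho$ one cannot localise the error over the intervals $I_n$, and the transference estimate may genuinely fail.
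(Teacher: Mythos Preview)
The paper does not actually prove this theorem; it is quoted without proof from \cite[Theorem 4.3.7]{9781493911936} as a tool used in Appendix~\ref{A}. Your write-up is therefore not a comparison target for the paper's own argument, but rather a reconstruction of the classical de~Leeuw transference proof behind that reference. The overall strategy you describe---reduce to $t=1$ via Theorem~\ref{thm5}, test on trigonometric polynomials, localise with a Schwartz function whose Fourier transform has small support, and pass to the limit---is exactly the standard one, and your computation of the main terms is correct.

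There is, however, a genuine slip in the error estimate. For $2\le p<\infty$ your interpolation between $\|\mathcal{F}h_{n,\epsilon}\|_{L^2}=\|h_{n,\epsilon}\|_{L^2}$ and $\|\mathcal{F}h_{n,\epsilon}\|_{L^\infty}\le\|h_{n,\epsilon}\|_{L^1}$ works as stated and gives $\|\mathcal{F}h_{n,\epsilon}\|_{L^p}\lesssim\delta(\epsilon)$. For $1<p<2$, however, Hausdorff--Young goes the wrong way: it bounds $\|\mathcal{F}g\|_{L^{p'}}$ in terms of $\|g\|_{L^p}$ with $p'\ge2$, and gives no control of $\|\mathcal{F}h_{n,\epsilon}\|_{L^p}$ for $p<2$. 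Since $h_{n,\epsilon}$ is only continuous and compactly supported, $\mathcal{F}h_{n,\epsilon}$ need not lie in $L^p$ for $p<2$ at all, so this step genuinely fails. The fix is short: multiplier norms are self-dual, $\|\sigma\|_{\mathscr{M}_p(\mathbb{R})}=\|\sigma\|_{\mathscr{M}_{p'}(\mathbb{R})}$ and likewise on $\mathbb{T}$, so it suffices to treat $p\ge2$, where your argument is complete. Add one line invoking duality to reduce to $p\ge2$ before the error analysis, and the proof is correct.
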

\noindent According to Theorem \ref{thm5}, it is enough to prove that $v$ gives rise to an $L^p(\mathbb{R})$ multiplier. Then Theorem \ref{thm_5} will give the desired result. The fact that $v$ defines an $L^p(\mathbb{R})$ multiplier can be checked by Theorem \ref{thm4}.

\section*{Acknowledgement}

The author is deeply indebted to A. Pushnitski for his valuable advice. His insight on the subject proved a significant guide that yielded the current results and triggered questions for further research.

\end{document}